\newcommand{\Aut}{\text{Aut}}
\newcommand{\Z}{{\mathbb Z}}
\newcommand\C{\mathcal S}
\newcommand\A{\operatorname{Aut}_{\C}}
\newcommand\FS{\operatorname{FSym}}
\newcommand\GL[1][]{\operatorname{GL}_{#1}}
\newcommand\id{\operatorname{id}}
\newcommand\x{\overline{x}}
\newcommand{\proset}{\,\mathrel{\lower 4pt\hbox{$\scriptscriptstyle/$}
\mkern -14mu\subseteq }\,} 
 \newtheorem{theorem}{Theorem}[section]
  \newtheorem{corollary}[theorem]{Corollary}
 \newtheorem{lemma}[theorem]{Lemma}
 \newtheorem{proposition}[theorem]{Proposition}
 \newtheorem*{theorem*}{Theorem}
\newtheorem{remark}[theorem]{Remark}
 \newtheorem{definition}[theorem]{Definition}
 \newtheorem{example}[theorem]{Example}
\numberwithin{equation}{section}
\title{Model-theoretic $K_1$ of free modules over PIDs}
\author{Sourayan Banerjee$^1$ and Amit Kuber$^2$}  
\address{$^{1,2}$Department of Mathematics and Statistics\\ Indian Institute of Technology, Kanpur\\Uttar Pradesh-208016, India}
\email{$^1$sourayanb@iitk.ac.in, $^2$askuber@iitk.ac.in (Corresponding author)}
\keywords{$K$-theory of model-theoretic structures, $K_1$ of modules, pp-formula, principal ideal domain}
\date{}
\subjclass[2020]{03C60, 19B99, 03C07, 19D23, 19B14}
\date{} 
\begin{document}

\maketitle

\vspace{-20pt}
\begin{abstract}
Motivated by Kraji\v{c}ek and Scanlon's definition of the Grothendieck ring $K_0(M)$ of a first-order structure $M$, we introduce the definition of $K$-groups $K_n(M)$ for $n\geq0$ via Quillen's $S^{-1}S$ construction. We provide a recipe for the computation of $K_1(M_R)$, where $M_R$ is a free module over a PID $R$, subject to the knowledge of the abelianizations of the general linear groups $GL_n(R)$. As a consequence, we provide explicit computations of $K_1(M_R)$ when $R$ belongs to a large class of Euclidean domains that includes fields with at least $3$ elements and polynomial rings over fields with characteristic $0$. We also show that the algebraic $K_1$ of a PID $R$ embeds into $K_1(R_R)$.
\end{abstract}

\section{Introduction}
For a first-order structure $M$ over a language $L$, Kraji\v{c}ek and Scanlon \cite{Kra} defined the model-theoretic Grothendieck ring, denoted $K_0(M)$--this ring classifies cut-and-paste equivalence classes of definable subsets (with parameters) of finite powers of $M$ up to definable bijections. Let $\C(M)$ denote the groupoid whose objects are all definable subsets of $M^n$ for $n\geq 1$, and whose morphisms are definable bijections between them. In fact, $(\C(M),\sqcup,\emptyset,\times,\{*\})$ is a symmetric monoidal groupoid with a pairing, where $\sqcup$ denotes disjoint union, $\times$ is the Cartesian product, and $\{*\}$ is a singleton. Moreover, this association is functorial on elementary embeddings. It is easy to see that the Grothendieck ring $K_0(M)$ is exactly the ring $K_0(\C(M))$.

Given a (skeletally) small symmetric monoidal groupoid $\C$, Quillen \cite{Quillen} gave a functorial construction of the abelian groups $(K_n(\C))_{n\geq0}$, known as the $K$-theory of $\C$, which seek to classify different aspects of its objects and morphisms. When the objects of $\C$ are sets and the monoidal operation is the disjoint union, then the Grothendieck group $K_0(\C)$ classifies the isomorphism classes of objects of $\C$ up to `scissors-congruence' while the group $K_1(\C)$ classifies the automorphisms of objects of $\C$, i.e., maps that cut an object into finitely many pieces which reassemble to give the same object, in the direct limit as the objects become large with respect to $\sqcup$. Following Quillen's construction discussed above, we define the \emph{model-theoretic $K$-theory} of the first order structure $M$ by $K_n(M):=K_n(\C(M))$ for $n\geq 0$.

Associated to a unital ring $R$ is a language $ L_R:=\langle+,-,0,\{\cdot_r\mid r\in R\}\rangle$, where $\cdot_r$ is unary function symbol describing the right action of the scalar $r$. Thus, a right R-module $M_R$ can be thought of as an $ L_R$-structure. The theory $T:=Th(M_R)$ of the module $M_R$ admits partial elimination of quantifiers \cite{Baur} in terms of subgroups of $M_R$ defined using positive primitive ($pp$, for short) formulas. In fact, the theory $T$ is completely determined by the set of finite indices of pairs of $pp$-definable subgroups of $M_R$. Say that the theory $T$ is \emph{closed under products}, written $T=T^{\aleph_0}$, if given any pair $A\leq B$ of $pp$-definable subgroups of $M_R$, the index $[B:A]$ is either $1$ or $\infty$; otherwise we write $T\neq T^{\aleph_0}$.

The second author computed in \cite[Theorem~5.2.3]{Kuber1} the model-theoretic Grothendieck ring $K_0(M_R)$ of a module $M_R$, and showed that this ring is isomorphic to a certain quotient $\mathbb Z[\mathcal X]/\mathcal J$ of the integral monoid ring $\mathbb Z[\mathcal X]$, where $\mathcal X$ is the multiplicative monoid consisting of $pp$-definable subgroups of powers of $M_R$, and $\mathcal J$ is the \emph{invariants ideal} of the monoid ring encoding finite indices of pairs of $pp$-definable subgroups; $T=T^{\aleph_0}$ if and only if $\mathcal J=\{0\}$.

The main contribution of this paper is the computation of the model-theoretic group $K_1(M_R)$ for an infinite free module $M_R$ over a principal ideal domain (PID) $R$. When $R$ is a PID, the monoid $\mathcal X$ appearing in its Grothendieck ring is isomorphic to $\mathbb N$, which allows us to associate to each non-empty definable set $D$ a natural number $\dim(D)$ as its \emph{dimension}. The automorphism(=definable self-bijection) group $\Omega(D)$ of a definable set $D$ admits a finite chain of normal subgroups $(\Omega_k(D))_{k=0}^{\dim(D)}$, where $\Omega_k(D)$ is the group of automorphisms which fix all elements outside a subset of dimension at most $k$. The $pp$-definable automorphisms, which include invertible affine linear transformations, are central in determining the quotients $\Omega_{k+1}(D)/\Omega_k(D)$. Using Bass' description of $K_1(M_R)$ (Theorem \ref{K1Bass}) as the colimit $\varinjlim_{n\in\mathbb N}(\Omega(M^n))^{ab}$, we then obtain Theorem \ref{mainth}, which is the recipe for the computation of $K_1(M_R)$ subject to the knowledge of the general linear groups $GL_n(R)$ for $n\geq 1$. All the steps of this recipe are described in detail in the computation of $K_1(V_F)$ (Theorem \ref{K1FINAL}), where $V$ is an infinite vector space over an infinite field $F$, and later only the differences for the PID case are highlighted.

As a consequence of Theorem \ref{mainth}, we obtain the following explicit description of $K_1(M_R)$ from Theorems~\ref{T=T0} and \ref{TneqT0} when the ring $R$ belongs to a large class of Euclidean domains that includes fields with at least $3$ elements (Corollaries \ref{F2k} and \ref{Fpk}) and polynomial rings over fields with characteristic $0$ (Corollary \ref{polyring}).
\begin{theorem*}
Let $R$ be an Euclidean domain that contains units $u,v$ satisfying $u+v=1$. Further, suppose that $M_R$ is an infinite free right $R$-module with $T:=Th(M_R)$. Let $\mathbf{G}$ denote the set of $pp$-definable finite-index subgroups $M_R$ equipped with the subgroup relation $\leq$. If $T=T^{\aleph_0}$ or if the directed system $(\mathbf{G},\leq)^{op}$ contains a cofinal system of even-indexed subgroups of $M_R$ then  $$K_1(M_R)\cong K_1(R_R)\cong\mathbb Z_2\oplus \bigoplus_{n=1}^\infty((GL_n(R))^{ab}\oplus\mathbb Z_2)\cong \mathbb Z_2\oplus\bigoplus_{n=1}^\infty(R^\times\oplus\mathbb Z_2);$$ otherwise
$$K_1(M_R)\cong K_1(R_R)\cong\mathbb Z_2\oplus\bigoplus_{n=1}^\infty((GL_n(R))^{ab}\oplus\mathbb Z_2\oplus \mathbb{Z}_2)\cong \mathbb Z_2 \oplus \bigoplus_{n=1}^\infty(R^\times\oplus\mathbb Z_2\oplus \mathbb{Z}_2).$$
\end{theorem*}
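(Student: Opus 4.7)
The plan is to deduce this theorem as a direct specialization of the two structural theorems already on hand, once the abelianizations $(GL_n(R))^{ab}$ have been identified with $R^\times$ under the Euclidean hypothesis and the unit condition.

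First, I would invoke Theorems \ref{T=T0} and \ref{TneqT0}, which are themselves consequences of the recipe in Theorem \ref{mainth}. Together they already give, for every PID $R$ and every infinite free module $M_R$, the closed forms
\[
K_1(M_R) \cong \mathbb{Z}_2 \oplus \bigoplus_{n=1}^\infty \bigl((GL_n(R))^{ab} \oplus \mathbb{Z}_2\bigr) \quad \text{or} \quad K_1(M_R) \cong \mathbb{Z}_2 \oplus \bigoplus_{n=1}^\infty \bigl((GL_n(R))^{ab} \oplus \mathbb{Z}_2 \oplus \mathbb{Z}_2\bigr)
\]
according as the cofinality-of-even-indices condition holds or fails, with the same formulas governing $K_1(R_R)$. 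This immediately yields the first two isomorphisms in each chain and reduces the theorem to showing $(GL_n(R))^{ab} \cong R^\times$ for every $n \geq 1$.

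Second, I would prove this identification by establishing $[GL_n(R), GL_n(R)] = SL_n(R)$ and appealing to the split surjection $\det \colon GL_n(R) \twoheadrightarrow R^\times$. The case $n = 1$ is trivial. For $n \geq 2$, the Euclidean algorithm gives $SL_n(R) = E_n(R)$, and the Whitehead lemma yields $E_n(R) \subseteq [GL_n(R), GL_n(R)]$ for $n \geq 3$. The case $n = 2$ is where the hypothesis $u + v = 1$ enters: since $v = 1 - u \in R^\times$, the element $u - 1 = -v$ is a unit, and the commutator identities
\[
[\mathrm{diag}(u,1),\, E_{12}(s)] = E_{12}((u-1)s), \qquad [\mathrm{diag}(1,u),\, E_{21}(s)] = E_{21}((u-1)s)
\]
exhibit every elementary transvection as a commutator in $GL_2(R)$; since elementary transvections generate $E_2(R) = SL_2(R)$, this completes the inclusion. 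Substituting $(GL_n(R))^{ab} \cong R^\times$ into the formulas above then yields the third isomorphism in each chain.

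I expect the main obstacle to be the $n = 2$ case: for a general PID, $[GL_2(R), GL_2(R)]$ can be properly smaller than $SL_2(R)$ (for instance, $SL_2(\mathbb{Z})^{ab} \cong \mathbb{Z}/12$), and the hypothesis $u + v = 1$ with $u, v \in R^\times$ is exactly the input needed to avoid this obstruction via the commutator identities above. All other steps --- citing the earlier theorems, applying the Whitehead lemma, and substituting --- are routine.
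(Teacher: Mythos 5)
Your proof follows the paper's own route: the paper obtains this theorem exactly by combining Theorems \ref{T=T0} and \ref{TneqT0} with the identification $(GL_n(R))^{ab}\cong R^\times$, which it derives from $[GL_n(R),GL_n(R)]=E_n(R)=SL_n(R)$ --- citing Cohn's result (Theorem \ref{Cohn}) for $n=2$, where your explicit commutator identities $[\mathrm{diag}(u,1),E_{12}(s)]=E_{12}((u-1)s)$ are precisely the standard proof of that citation, and Theorems \ref{SLn} and \ref{ED} for $n\geq 3$. One caution: Theorems \ref{T=T0} and \ref{TneqT0} are not valid ``for every PID $R$'' as you assert --- their closed forms in terms of $(GL_n(R))^{ab}$ already require the Euclidean hypothesis and the unit condition $1=u+v$ (the latter is also what forces the coinvariants $(M^n)_{GL_n(R)}$ to vanish in Lemma \ref{quotfree}, including for $n=1$; compare the extra $\mathbb{Z}_2$ that appears for $R=\mathbb{Z}$ in Remark \ref{chad}), so they must be invoked under those hypotheses, which is harmless here since the theorem assumes them.
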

Note in the above result that $K_1(M_R)$ depends only on the ring $R$, and not on the free module $M_R$. Even though the ring $\Z$ of integers does not satisfy the hypotheses of the above theorem, we show in Theorem \ref{ZK[x]} that $K_1(\Z_\Z)\cong\bigoplus_{n=0}^\infty\Z_2$. Our recipe to compute $K_1(M_R)$ has its limitations for explicit computations over an arbitrary PID $R$ since it heavily relies on the knowledge of the groups $(GL_n(R))^{ab}$, and unfortunately the literature in this direction is scarce. In Remarks \ref{failF2} and \ref{failMz}, we note the obstacles one would face if they were to compute $K_1(M_R)$ when $R$ is the field $F_2$ with two elements and $\Z$ respectively.  

We also give a surprising connection between algebraic $K$-theory and model-theoretic $K$-theory in the context of PIDs--no such connection is known to exist at the level of Grothendieck rings. In particular, we show in Theorem \ref{algmodK1} that the algebraic $K_1$-group over a PID $R$, $K_1^\oplus(R)$, embeds into $K_1(R_R)$.

The rest of the paper is organized as follows.
In section, \S~\ref{SemGrP}, we briefly recall the basic theory of semi-direct products, where we document the results on the abelianization of semi-direct products and certain wreath products. After setting up model-theoretic terminology, we recall the construction of the Grothendieck ring of a module thought of as a model-theoretic structure in \S~\ref{MTM}. Then in \S~\ref{KSM} we briefly recall Quillen's $K$-theory of a symmetric monoidal groupoid and use it to associate $K$-groups to a model-theoretic structure with a special emphasis on Bass' alternate description of $K_1$ (Theorem \ref{K1Bass}). Given an infinite vector space $V_F$ over an infinite field $F$, the detailed computation of $K_1(V_F)$ in \S~\ref{K1V} acts as a template for the computation of $K_1(M_R)$ in later sections. We describe in \S~\ref{K1pid} how the recipe of the previous section can be modified to compute $K_1(M_R)$ (Theorem \ref{mainth}) when $R$ is a PID and $M_R$ is an infinite free $R$-module. The goal of \S~\ref{K1ed} is to explicitly compute $K_1(M_R)$ when $R$ falls in a large class of Euclidean domains (ED). Theorems \ref{T=T0} and \ref{TneqT0} are the main results of this section. The computation of $K_1(\Z_\Z)$ (Theorem \ref{ZK[x]}) is the main goal of the short section \S~\ref{K1ed}. In the final section of the paper, \S~\ref{algK1modK1}, we discuss the connection between the algebraic $K_1$ of a PID $R$ and the group $K_1(R_R)$.

\section{Semi-direct products and abelianization}\label{SemGrP}
In this section, we recall some facts from group theory that will be used in later sections.
\begin{definition}
Let $(G,\cdotp,e)$ be a group and $N, H$ be two subgroups of $G$ with $N$ normal. We say that $G$ is the \emph{inner semi-direct product} of $N$ and $H$, written $G=H \ltimes N$ if $G = NH$ and $N \cap H =\{e\}$.

Given any two groups $H$ and $K$, and a group homomorphism $ \phi: K \rightarrow \Aut(H)$, the \emph{outer semi-direct product} of $H$ and $K$, denoted $K \ltimes_{\phi}H$, is the set $H\times K$ with multiplication defined by $(h,k)(h',k') := (h \phi(k)(h'),kk')$. We often suppress the subscript $\phi$ from the semi-direct product when the action is clear from the context.    

Given groups $K, L$ and a set $T$ on which $L$ acts, the \emph{restricted wreath product}, denoted $K\mathrm{wr}_{T}L$, is the semi-direct product $ L\ltimes (\bigoplus_{x\in T} K) $, where $l\in L$ acts on $((k_x)_{x\in T})$ as per the rule $l((k_x)_{x\in T}) := ((k_{l^{-1}x})_{x \in T})$.

For a set $T$, the finitary permutation group $\FS(T)$ on $T$ consists of permutations $\sigma \in \operatorname{Sym}(T)$ that fix all elements of $T$ outside a finite subset. Given a group $G$, the notation $G\wr\FS(T)$ denotes the restricted wreath product $G\mathrm{wr}_T\FS(T)$.
\end{definition}

For a group $G$, we denote by $G'$ its commutator subgroup $[G,G]$ and by $G^{ab}$ its abelianization $G/G'$.

The following lemma computes the abelianization of a semi-direct product.
\begin{lemma}{\label{semiab}}
Suppose $G$ is a group acting on a group $H$. Then $(G \ltimes H)^{\text{ab}} \cong  G^{\text{ab}} \times (H^{\text{ab}})_G$; here $(H^{\text{ab}})_G$ is the quotient of $H^{ab}$ by the subgroup generated by the elements of the form $h^gh^{-1}$, where $h^g$ denotes the action of $g \in G$ on $h \in H^{ab}$ induced by the action of $G$ on $H$. 
\end{lemma}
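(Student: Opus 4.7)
The plan is to use the universal property of abelianization. Recall that $(G \ltimes H)^{ab}$ is characterized by the fact that group homomorphisms from it to any abelian group $A$ correspond bijectively to group homomorphisms $G \ltimes H \to A$. So I would identify which pairs of data give such homomorphisms and show the answer matches $G^{ab} \times (H^{ab})_G$.

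First, I would observe that since every element of $G \ltimes H$ can be written uniquely as a product of an element of $G$ and an element of $H$, a homomorphism $\phi: G \ltimes H \to A$ is determined by the pair of restrictions $\phi_G := \phi|_G : G \to A$ and $\phi_H := \phi|_H : H \to A$. The semi-direct product relation $ghg^{-1} = h^g$ (where $h^g$ denotes the action of $g$ on $h$) forces $\phi_H(h^g) = \phi(g)\phi_H(h)\phi(g)^{-1}$. When the target $A$ is abelian this simplifies to $\phi_H(h^g) = \phi_H(h)$, so $\phi_H$ is $G$-invariant. Conversely, given a homomorphism $\phi_G$ and a $G$-invariant homomorphism $\phi_H$ to $A$, the map $(g,h) \mapsto \phi_G(g)\phi_H(h)$ is easily checked to be a well-defined homomorphism because $A$ is abelian.

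Next, I would package this: homomorphisms $G \to A$ factor through $G^{ab}$, while $G$-invariant homomorphisms $H \to A$ are precisely those that vanish on every commutator $[h_1,h_2]$ and on every element $h^g h^{-1}$, so they factor through $(H^{ab})_G$. Hence homomorphisms $(G \ltimes H) \to A$ correspond to pairs of homomorphisms $G^{ab} \to A$ and $(H^{ab})_G \to A$, and these in turn correspond to homomorphisms $G^{ab} \times (H^{ab})_G \to A$. By the Yoneda lemma (or by directly constructing the mutually inverse maps as I sketch next), we conclude $(G \ltimes H)^{ab} \cong G^{ab} \times (H^{ab})_G$.

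For concreteness I would also exhibit the isomorphism explicitly: the assignment $(g,h) \mapsto (\overline{g}, \overline{h})$ defines a homomorphism $G \ltimes H \to G^{ab} \times (H^{ab})_G$ (this is where I would check multiplicativity, using that $\overline{h^g} = \overline{h}$ in $(H^{ab})_G$), and it is visibly surjective with target abelian, so it descends to a surjection $(G \ltimes H)^{ab} \twoheadrightarrow G^{ab} \times (H^{ab})_G$. In the other direction, the inclusions $G \hookrightarrow G \ltimes H$ and $H \hookrightarrow G \ltimes H$ induce maps $G^{ab} \to (G \ltimes H)^{ab}$ and $(H^{ab})_G \to (G \ltimes H)^{ab}$ (the latter being well-defined precisely because $h$ and $h^g$ are conjugate in $G \ltimes H$ and thus equal after abelianization), and their product is a two-sided inverse. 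The only real subtlety, which I would be careful about, is verifying that the map $(H^{ab})_G \to (G \ltimes H)^{ab}$ is well-defined on the quotient; this rests on the identity $h^g h^{-1} = [g,h]$ inside $G \ltimes H$.
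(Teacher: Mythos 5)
Your proof is correct. It takes a different route from the paper's: the paper argues directly on the group, noting that the commutator subgroup of $G\ltimes H$ is (normally) generated by $[H,H]\cup[G,H]\cup[G,G]$ and then imposing these three families of relators in succession to pass from $G\ltimes H$ to $G\ltimes H^{ab}$ to $G\times(H^{ab})_G$ to $G^{ab}\times(H^{ab})_G$. You instead dualize, characterizing homomorphisms from $G\ltimes H$ into an arbitrary abelian group $A$ as pairs consisting of a homomorphism $G\to A$ and a $G$-invariant homomorphism $H\to A$, and then invoking representability; you also supply the explicit mutually inverse maps. The two arguments rest on the same core identity $h^g h^{-1}=[g,h]$ in $G\ltimes H$ (this is exactly what makes the relators $[G,H]$ equivalent to $G$-coinvariance, and what makes your map $(H^{ab})_G\to(G\ltimes H)^{ab}$ well defined), so the mathematical content is the same; what your version buys is a cleaner verification that the stage-by-stage quotients in the paper's sketch really are independent of order and genuinely yield a direct product, at the cost of a slightly longer setup. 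Either proof is acceptable here.
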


\begin{proof}[Sketch Proof]
The commutator subgroup $[G\ltimes H,G\ltimes H]$ is generated by $[H,H]\cup[G,H]\cup[G,G]$. Therefore
$(G\ltimes H)^{ab}=(G\ltimes H)/\langle[H,H]\cup[G,H]\cup[G,G]\rangle$.

Applying the relators $[H,H]$ gives $G\ltimes H^{ab}$, then applying the relators $[G,H]$ gives $G\times(H^{ab})_G$. Finally, applying $[G,G]$, we get the desired group $G^{ab}\times(H^{ab})_G$.
\end{proof}   

\begin{proposition}[\protect{\cite[\S~6.1]{BMMN}}]
 \label{commfinperm}
If $T$ is a set with at least two elements, then the finitary alternating group $\operatorname{Alt}(T)$ is the commutator subgroup of the finitary permutation group $\FS(T)$. In particular, $\FS(T)^{ab}\cong\mathbb Z_2$.
\end{proposition}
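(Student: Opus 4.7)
The plan is to prove both inclusions $\operatorname{Alt}(T) \subseteq [\FS(T), \FS(T)]$ and $[\FS(T), \FS(T)] \subseteq \operatorname{Alt}(T)$, and then deduce the abelianization from the first isomorphism theorem applied to the sign map.

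First I would handle the easy inclusion. Since every element of $\FS(T)$ moves only finitely many points, it admits a well-defined sign, giving a group homomorphism $\operatorname{sgn}\colon \FS(T) \to \mathbb{Z}_2$ whose kernel is precisely $\operatorname{Alt}(T)$. Because the target is abelian, the universal property of abelianization forces $[\FS(T), \FS(T)] \subseteq \ker(\operatorname{sgn}) = \operatorname{Alt}(T)$.

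For the reverse inclusion, I would first dispose of the degenerate case $|T|=2$: there $\FS(T) \cong \mathbb{Z}_2$ is already abelian and $\operatorname{Alt}(T)$ is trivial, so both sides coincide. For $|T| \geq 3$, every element of $\operatorname{Alt}(T)$ is a product of $3$-cycles (a product of an even number of transpositions can be regrouped in pairs, each pair either sharing a point and equalling a $3$-cycle $(ab)(ac) = (acb)$ or disjoint and equalling a product of two $3$-cycles via the identity $(ab)(cd) = (acb)(acd)$). Hence it suffices to show each $3$-cycle lies in $[\FS(T), \FS(T)]$. Given distinct $a,b,c \in T$, a direct computation yields $[(a\,b),(a\,c)] = (a\,b)(a\,c)(a\,b)(a\,c) = (a\,b\,c)$, since transpositions are self-inverse. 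Thus $\operatorname{Alt}(T) \subseteq [\FS(T), \FS(T)]$.

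For the final assertion, the sign map descends to a surjection $\FS(T)^{ab} \twoheadrightarrow \mathbb{Z}_2$, which is injective by the equality $[\FS(T),\FS(T)] = \operatorname{Alt}(T)$ just established, giving $\FS(T)^{ab} \cong \mathbb{Z}_2$. I expect no serious obstacle here; the only subtlety is checking that the sign homomorphism is well-defined on the finitary symmetric group, but this is immediate since every $\sigma \in \FS(T)$ restricts to an ordinary finite permutation on its (finite) support.
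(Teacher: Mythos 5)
Your proof is correct. The paper does not actually prove this proposition---it only cites \cite[\S~6.1]{BMMN}---so there is no argument in the text to compare against; your write-up supplies the standard self-contained proof. Both directions check out: the sign homomorphism $\operatorname{sgn}\colon \FS(T)\to\mathbb Z_2$ is well defined on finitary permutations and gives $[\FS(T),\FS(T)]\subseteq\operatorname{Alt}(T)$, and the identities $(ab)(ac)=(acb)$, $(ab)(cd)=(acb)(acd)$, and $[(a\,b),(a\,c)]=(a\,b\,c)$ reduce the reverse inclusion to $3$-cycles, which you exhibit as commutators. The separate treatment of $|T|=2$ (where $\operatorname{Alt}(T)$ is trivial and $\FS(T)\cong\mathbb Z_2$ is abelian) is the right way to handle the degenerate case, and the final isomorphism $\FS(T)^{ab}\cong\mathbb Z_2$ follows as you say from the first isomorphism theorem. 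The only step you leave implicit is the trivial subcase of a repeated transposition pair $(ab)(ab)=e$ in the regrouping argument, which costs nothing.
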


The following lemma computes the abelianization of a wreath product.
\begin{lemma}{\label{wt}}
Let $G$ be a group, $T$ be a set with at least two elements, and $\Sigma := \FS(T)$. Then $(G\wr\Sigma)^{ab} \cong G^{ab} \times \mathbb{Z}_2$.
\end{lemma}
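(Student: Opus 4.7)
The plan is to combine the two preceding results: Lemma \ref{semiab} reduces the abelianization of the semi-direct product to an abelianization and a coinvariant computation, while Proposition \ref{commfinperm} takes care of one of the two factors.

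First I would unpack the definition: by construction $G\wr\Sigma=\Sigma\ltimes H$ with $H:=\bigoplus_{x\in T}G$, where $\sigma\in\Sigma$ sends $(g_x)_{x\in T}$ to $(g_{\sigma^{-1}(x)})_{x\in T}$. Feeding this into Lemma \ref{semiab} gives
\[
(G\wr\Sigma)^{ab}\;\cong\;\Sigma^{ab}\times(H^{ab})_\Sigma.
\]
Proposition \ref{commfinperm} identifies the first factor with $\mathbb Z_2$, using the hypothesis $|T|\geq2$.

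Next I would compute the coinvariants. Since abelianization commutes with arbitrary direct sums, $H^{ab}\cong\bigoplus_{x\in T}G^{ab}$, and the induced $\Sigma$-action is still by permutation of coordinates. The claim is that the map
\[
\Phi\colon\bigoplus_{x\in T}G^{ab}\longrightarrow G^{ab},\qquad (g_x)_{x\in T}\longmapsto \sum_{x\in T}g_x,
\]
descends to an isomorphism on coinvariants. Clearly $\Phi$ is $\Sigma$-invariant (it only depends on the unordered multiset of coordinates), so it factors through $(H^{ab})_\Sigma$. For injectivity, I would use that when $|T|\geq2$ the group $\Sigma=\FS(T)$ acts transitively on $T$: given any $x,y\in T$ and any $g\in G^{ab}$, the transposition $(x\,y)\in\Sigma$ witnesses that the generator of $\bigoplus_{x\in T}G^{ab}$ supported at $x$ with value $g$ is identified with the generator supported at $y$ with value $g$. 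Hence in $(H^{ab})_\Sigma$ every element is represented by a single-coordinate vector, and $\Phi$ has an obvious two-sided inverse at the level of coinvariants.

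Combining, $(G\wr\Sigma)^{ab}\cong\mathbb Z_2\times G^{ab}$, as required. No real obstacle arises; the only point needing care is the mild hypothesis $|T|\geq2$, which is used twice — once to invoke Proposition \ref{commfinperm} and once to guarantee transitivity of $\Sigma$ on $T$ in the coinvariant calculation.
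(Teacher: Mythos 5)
Your proposal is correct and follows essentially the same route as the paper: decompose $G\wr\Sigma$ as $\Sigma\ltimes\bigoplus_{x\in T}G$, apply Lemma \ref{semiab} together with Proposition \ref{commfinperm}, and identify the coinvariants $(H^{ab})_\Sigma$ with $G^{ab}$ via the coordinate-sum map. The only (cosmetic) difference is in verifying that this map is an isomorphism on coinvariants: the paper proves $\ker\varepsilon$ equals the subgroup of relators by induction on the size of the support, whereas you construct the inverse directly by using transpositions to move every class onto a single-coordinate representative -- a slightly cleaner presentation of the same computation.
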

\begin{proof}
Let $\mathbf G:=(\bigoplus_{x\in T}G_x)$, where $G_x$ is a copy of $G$ for each $x\in T$. Then $G\wr\Sigma= \Sigma \ltimes \mathbf G$, where $\Sigma$ acts on the indices of elements of $\mathbf G$. Clearly $\mathbf G^{ab}=\bigoplus_{x\in T}G^{ab}_x$. Define a map $\varepsilon:\mathbf G^{ab}\to G^{ab}$ by $(g_x)_{x\in T}\mapsto\prod_{x\in T}g_x$. It can be easily seen that $\varepsilon$ is a homomorphism.

The action of $\sigma\in\Sigma$ on $\mathbf g:=(g_x)_{x\in T}\in\mathbf G^{ab}$, denoted $\mathbf g^{\sigma}$, is given by $(g_{\sigma^{-1}x})_{x\in T}$. Let $H$ denote the subgroup of $\mathbf G^{ab}$ generated by $\{\mathbf g^{\sigma}\mathbf g^{-1}:\mathbf g\in\mathbf G^{ab},\sigma\in\Sigma\}$.

We claim that $H=\ker\varepsilon$.

Note that $\varepsilon\mathbf g=\varepsilon\mathbf g^{\sigma}$ for each $\mathbf g:=(g_x)_{x\in T}\in\mathbf G^{ab}, \sigma\in\Sigma$. Hence $H\subseteq\ker\varepsilon$.

On the other hand, consider $\mathbf g:=(g_x)_{x\in T}\in\ker\varepsilon$. Since $\Sigma$ consists only of finitary permutations of $T$, there are only finitely many $x\in T$ such that $g_x\neq 1$, say $x_1,x_2,\hdots,x_n$. We will use induction on $n$ to show that $\mathbf g\in H$.

The case when $n=0$ is trivial. If $n>0$, the identity $\prod_{i=1}^ng_{x_i}=1$ gives $n\geq2$.

Assume for induction that the result holds for all values of $n$ strictly less than $k>0$.

Suppose $n=k$. Let $\sigma$ be the transposition $(x_1,x_2)\in\Sigma$ and $\mathbf g':=(g'_x)_{x\in T}$ be the element of $\mathbf G^{ab}$ whose only non-trivial component is $g'_{x_1}=g^{-1}_{x_1}$. Let $\mathbf g'':=\mathbf g\mathbf g'((\mathbf g')^{\sigma})^{-1}$. Then $g''_{x_1}=1$ and $\varepsilon\mathbf g''=1$. The number of non-identity components of $\mathbf g''$ is strictly less than $k$ and thus, using induction hypothesis, $\mathbf g''\in H$. Therefore, $\mathbf g=\mathbf g''(\mathbf g')^{\sigma}(\mathbf g')^{-1}\in H$, thus proving the claim.

Now $(\mathbf G^{ab})_{\Sigma}=\mathbf G^{ab}/H=\mathbf G^{ab}/\ker\varepsilon=G^{ab}$. We also have $\Sigma^{ab}=\mathbb Z_2$ from Proposition \ref{commfinperm}. Thus Proposition \ref{semiab} gives $(G\wr\Sigma)^{ab}\cong(\mathbf G^{ab})_{\Sigma}\times\Sigma^{ab}\cong G^{ab}\times\mathbb Z_2$.
\end{proof}

\section{Model-theoretic Grothendieck rings of modules}\label{MTM}
\begin{definition}
Let $L$ be a language, $M$ a first-order $L$-structure with domain again denoted $M$ and $m\geq1$. Say that $A \subseteq M^m$ is \emph{definable with parameters} if there is a formula $\phi(x_1,...x_m,y_1,...,y_n)$ such that for all $a_1,...,a_m \in M$, $(a_1,...,a_m) \in A$ if and only if  $M\vDash \phi[a_1,...,a_m,b_1,...,b_n]$ for some $b_1,...,b_n \in M$. We will always assume that definable means definable with parameters from the universe.
\end{definition}
For each $m \geq 1$, let $\text{Def}(M^m)$ be the collection of all definable subsets of $M^m$, and set $\overline{\text{Def}}(M):= \bigcup_{m\geq 1} \text{Def}(M^m)$.
\begin{definition}
Say that two definable sets $A,B\in\overline{\mathrm{Def}}(M)$ are \emph{definably isomorphic} if there exists a definable bijection between them, i.e., a bijection $f:A\rightarrow B$ such that the graph $Graph(f)\in\overline{\mathrm{Def}}(M)$. Definable isomorphism is an equivalence relation on $\overline{\mathrm{Def}}(M)$ and the equivalence class of a definable set $A$ is denoted by $[A]$. We use $\widetilde{\mathrm{Def}}(M)$ 
to denote the set of all equivalence classes with respect to this relation.
\end{definition}

The assignment  $A\mapsto[A]$ defines a surjective map $[-]:\overline{\mathrm{Def}}(M)\rightarrow\widetilde{\mathrm{Def}}(M)$. We can regard $\widetilde{\mathrm{Def}}(M)$ as an $L_{ring}$-structure. In fact, it is a semiring with respect to the operations defined as follows:
\begin{itemize}
	\item $0 := [\emptyset]$;
	\item $1 := [\{*\}]$ for any singleton subset $\{*\}$ of $M$;
	\item $[A]+[B] := [A'\sqcup B']$ for $A'\in[A],B'\in[B]$ such that $A'\cap B'=\emptyset$; and
	\item $[A]\cdotp[B] := [A\times B]$.
\end{itemize}
\begin{definition}
We define the \emph{model-theoretic Grothendieck ring of the first order structure} $M$, denoted $K_0(M)$, to be the ring completion of the semiring $\widetilde{\mathrm{Def}}(M)$.
\end{definition}

Every right $R$-module $M$ is a first-order structure for the language $L_R$ of right $R$-modules, where $L_{R} := \langle +,-,0,\{\cdot_r:r \in R\} \rangle$, where each $\cdot_r$ is a unary function symbol for the scalar multiplication by $r\in R$ on the right. The right $R$-module structure $M$ will be denoted as $M_R$.

The theory of $M_R$ admits a partial elimination of quantifiers with respect to $pp$-formulas.
\begin{definition}\cite[\S~2.1]{PreBk}
A \emph{positive primitive formula} (\emph{$pp$-formula} for short) is an $L_{R}$-formula $\phi(x_1,\hdots,x_n)$ logically equivalent to one of the form
\begin{equation*}
\exists y_1\exists y_2\hdots\exists y_m\bigwedge_{i=1}^t\left(\sum_{j=1}^n x_j r_{ij}+\sum_{k=1}^m y_ks_{ik}+c_i=0\right),
\end{equation*}
where $r_{ij},s_{ik}\in R$ and the $c_i$ are parameters from $M$.

A subset $B$ of $M^m$ is \emph{$pp$-definable} if it is definable by a $pp$-formula, and a \emph{$pp$-definable function} is a function between two definable sets whose graph is $pp$-definable.
\end{definition}

\begin{lemma}
Every parameter-free $pp$-formula $\phi(\overline x)$ defines a subgroup of $M^n$, where $n$ is the length of $\overline x$. If $\phi(\overline x)$ contains parameters from $M$, then it defines either the empty set or a coset of a $pp$-definable subgroup of $M^n$. Furthermore, the conjunction of two $pp$-formulas is (logically equivalent to) a $pp$-formula.
\end{lemma}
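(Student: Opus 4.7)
The plan is to treat each of the three assertions in turn, reducing each to elementary facts about kernels and fibers of homomorphisms of right $R$-modules. First I would fix a parameter-free pp-formula $\phi(\overline x)$ in the displayed standard form and observe that its set of witnesses $\{(\overline x,\overline y) \in M^{n+m} : \bigwedge_{i=1}^t (\sum_j x_j r_{ij} + \sum_k y_k s_{ik} = 0)\}$ is precisely the kernel $K$ of the $R$-module homomorphism $\theta : M^{n+m} \to M^t$ whose $i$-th component sends $(\overline x,\overline y)$ to $\sum_j x_j r_{ij} + \sum_k y_k s_{ik}$. Since $K$ is a submodule of $M^{n+m}$ and the projection $\pi : M^{n+m} \to M^n$ onto the first $n$ coordinates is a module homomorphism, the defined set $\phi(M^n) = \pi(K)$ is a subgroup of $M^n$.

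Next, for the parameter-containing case, I would rerun the same analysis with $\theta$ unchanged, noting that the witness set in $M^{n+m}$ is now the fiber $\theta^{-1}(-c_1,\ldots,-c_t)$. Such a fiber is either empty or, upon choosing any witness $(\overline a_0,\overline b_0)$, equals the coset $(\overline a_0,\overline b_0) + K$. Projecting to $M^n$, I get that $\phi(M^n)$ is either empty or $\overline a_0 + \pi(K)$. Finally, $\pi(K)$ coincides with the subgroup defined by the parameter-free pp-formula obtained from $\phi$ by replacing every $c_i$ with $0$, which is pp-definable by the first assertion.

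For the conjunction claim, given pp-formulas $\phi_1(\overline x) = \exists \overline y\, \psi_1(\overline x,\overline y)$ and $\phi_2(\overline x) = \exists \overline z\, \psi_2(\overline x,\overline z)$ with $\psi_1,\psi_2$ finite conjunctions of affine linear equations (possibly involving parameters from $M$), I would first rename bound variables so that the tuples $\overline y,\overline z$ are disjoint and neither occurs in the other's matrix. Then $\phi_1 \wedge \phi_2$ is logically equivalent to $\exists \overline y\, \exists \overline z\, (\psi_1 \wedge \psi_2)$, which is again in pp-normal form since $\psi_1 \wedge \psi_2$ is a finite conjunction of affine linear equations. I do not expect any genuine obstacle here: the whole lemma is a syntactic bookkeeping exercise, and the only subtlety is to treat the parameters $c_i$ as fixed elements of $M$ rather than as free variables, so that the kernel-versus-fiber dichotomy applies cleanly.
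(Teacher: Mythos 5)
Your argument is correct, and it is the standard kernel/fiber proof of this standard fact; the paper itself states the lemma without proof (it is quoted from the model theory of modules literature, cf.\ Prest), so there is nothing to diverge from. One small inaccuracy worth fixing: for a general noncommutative unital ring $R$ the map $\theta$ whose $i$-th component is $(\overline x,\overline y)\mapsto \sum_j x_j r_{ij}+\sum_k y_k s_{ik}$ is only a homomorphism of abelian groups, not of right $R$-modules, since right multiplication by $r_{ij}$ need not commute with the right $R$-action; but your proof uses only additivity of $\theta$ and of the projection $\pi$, so the conclusions (a subgroup, respectively the empty set or a coset of the subgroup cut out by the homogenized formula) are unaffected, and the bound-variable renaming for the conjunction is exactly the right bookkeeping.
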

Here is a consequence of the fundamental theorem of the model theory of modules, which is a partial quantifier elimination result due to Baur and Monk.
\begin{theorem} \cite{Baur}
For $n \geq 1$, every definable subset of $M^n$ is a finite boolean combination of $pp$-definable subsets of $M^n.$
\end{theorem}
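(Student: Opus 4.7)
The plan is to prove the stronger statement that every $L_R$-formula $\phi(\bar{x})$ is equivalent modulo $T = \mathrm{Th}(M_R)$ to a boolean combination of pp-formulas; substituting in the parameters used to define a given subset $A \subseteq M^n$ then yields the claim. I would proceed by induction on the complexity of $\phi$. The base case is immediate, since atomic $L_R$-formulas are of the form $\sum_i x_i r_i + c = 0$, which are already pp. Conjunctions, disjunctions, and negations trivially preserve the class of boolean combinations of pp-formulas, so the only substantive inductive step is the existential quantifier.

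For the existential step, I would first put the inductive hypothesis in disjunctive normal form and distribute $\exists y$ across the disjunction. This reduces the problem to showing that a formula of the shape
$$\exists y \Bigl( \theta(\bar{x}, y) \wedge \bigwedge_{i=1}^n \neg \psi_i(\bar{x}, y) \Bigr)$$
is equivalent to a boolean combination of pp-formulas, where $\theta$ and each $\psi_i$ are pp. Replacing $\psi_i$ by $\psi_i \wedge \theta$ (still pp by the earlier lemma on conjunctions), I may assume that for each parameter tuple $\bar{a}$ the fibre $\psi_i(\bar{a}, M)$ is either empty or a sub-coset of the fibre $\theta(\bar{a}, M)$.

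The key technical input is B.H.\ Neumann's lemma on coverings of a group by cosets: a coset is exhausted by finitely many sub-cosets if and only if it is already exhausted by those sub-cosets whose stabilising subgroup has finite index in the ambient group. Applied fibrewise, the existential statement fails at $\bar{a}$ exactly when $\theta(\bar{a}, M)$ is covered by those non-empty $\psi_i(\bar{a}, M)$ whose underlying pp-subgroup has finite index in the pp-subgroup associated to $\theta$. Since these finite indices are fixed invariants of $T$, and since each condition ``$\psi_i(\bar{a}, M) \neq \emptyset$'' is itself the pp-condition $\exists y\, \psi_i(\bar{x}, y)$ on $\bar{a}$, a finite case analysis over the possible patterns of emptiness and finite-index containments assembles the required boolean combination of pp-formulas. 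The main obstacle is this uniform bookkeeping: one must handle simultaneously the regime where all relevant pp-pair indices are infinite (in which the existential collapses to $\exists y\, \theta(\bar{x}, y)$, itself pp) and the regime where finitely many indices are finite, requiring an explicit enumeration of coset representatives. Packaging both cases into a single $L_R$-formula with pp atoms is the technical heart of the Baur--Monk argument.
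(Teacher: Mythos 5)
The paper does not prove this statement---it is quoted from the cited reference \cite{Baur}---so the only meaningful comparison is with the classical Baur--Monk argument, and your sketch is exactly that argument: induction on complexity, reduction of the existential step to $\exists y\,(\theta\wedge\bigwedge_i\neg\psi_i)$ with pp conjuncts, and B.~H.~Neumann's lemma on covering a coset by finitely many cosets, with the finite indices absorbed into the complete theory $T=\mathrm{Th}(M_R)$ so that no separate invariant sentences survive. The one point your ``finite case analysis'' must be explicit about is that deciding whether the finite-index sub-cosets actually cover $\theta(\bar a,M)$ requires knowing not just which $\psi_i(\bar a,M)$ are non-empty but which intersections $\bigcap_{i\in S}\psi_i(\bar a,M)$ are non-empty (each again a pp-condition $\exists y\bigwedge_{i\in S}\psi_i$), after which an inclusion--exclusion count in the finite quotient of $\theta$'s fibre group by $\bigcap_i H_i$, with all indices fixed by $T$, settles coverage; with that supplied, the proof is complete and standard.
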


Let $\mathcal L_n(M_R)$ (or just $\mathcal{L}_n$, if the module is clear from the context) denote the meet-semilattice of all $pp$-definable subsets of $M^n$ under intersection. Set $\overline{\mathcal {L}}(M_R):=\bigcup_{n\geq 1}\mathcal L_n(M_R)$. The notation $\overline{\mathcal{X}}(M_R)$ (or just $\overline{\mathcal X}$, if the module is clear from the context) will denote the set of \emph{colours}, i.e., $pp$-definable bijection classes of elements of $\overline{\mathcal L}(M_R)$. For $A\in\overline{\mathcal L}$, the notation $[[A]]$ will denote its $pp$-definable bijection class. The set $\overline{\mathcal{X}}^*:=\overline{\mathcal{X}}\setminus[[\emptyset]]$ of non-trivial colours is a monoid under multiplication.

\begin{definition}
Given a right $R$-module $M_R$ and subgroups $A,B\in\mathcal L_n$, define the invariant $\mathrm{Inv}(M;A,B)$ to be the index $[A:A\cap B]$ if this is finite or $\infty$ otherwise.
\end{definition}
  
\begin{definition}
The theory $T:=Th(M_R)$ of a right $R$-module $M_R$ is said to be \emph{closed under products}, written $T=T^{\aleph_0}$, if for each $n\geq 1$ and for any subgroups $A,B\in\mathcal L_n$, the invariant $\mathrm{Inv}(M;A,B)$ is either $1$ or $\infty$; otherwise we write $T\neq T^{\aleph_0}$.
\end{definition}

Nontrivial finite invariants play a central role in the computation of the model-theoretic Grothendieck ring of a module.
\begin{definition}\label{invideal}
Let $\delta_{\mathfrak A}:\overline{\mathcal X}^*\rightarrow\mathbb Z$ denote the characteristic function of a colour $\mathfrak A\in\overline{\mathcal X}^*$. Define the \emph{invariants ideal} $\mathcal J$ to be the ideal of the monoid ring $\mathbb Z[\overline{\mathcal X}^*]$ generated by the set
\begin{equation*}
    \{\delta_{[[P]]}=[P:Q]\delta_{[[Q]]}\mid P,Q\in\overline{\mathcal L},\ P\supseteq Q\supseteq\{0\},\ \mathrm{Inv}(M;P,Q)<\infty\}.
\end{equation*}
\end{definition}
We explicitly record the expression of $K_0(M_R)$ below.
\begin{theorem}\cite[Theorem~5.2.3, Corollary~5.2.11]{Kuber1}\label{FINALgeneral}
For every right $ R$-module $M_R$, we have $K_0(M_R)\cong\mathbb Z[\overline{\mathcal X}^*]/\mathcal J$, where $\mathcal{J}$ is the invariants ideal. Moreover, if $M_R\neq0$ then $K_0(M_R)\neq0$.
\end{theorem}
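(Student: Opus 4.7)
The plan is to build a ring isomorphism $\Phi:\mathbb Z[\overline{\mathcal X}^*]/\mathcal J\xrightarrow{\sim}K_0(M_R)$ by first defining it on the generators, checking that the invariants ideal lies in its kernel, then producing an explicit inverse. The fundamental input, which reduces the problem to a combinatorial one, is Baur--Monk: every definable subset of $M^n$ is a finite Boolean combination of cosets of $pp$-definable subgroups, and by an inclusion-exclusion argument (cutting a Boolean combination into pieces along intersections of its atoms) it can therefore be written as a disjoint union of finitely many cosets of $pp$-subgroups. Since a coset $a+A$ is definably bijective to $A$ via translation, each such atom has a well-defined class $[[A]]\in\overline{\mathcal X}^*$.

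With this in hand, I would define $\Phi$ on generators by $\delta_{[[A]]}\mapsto[A]$ and extend additively and multiplicatively, using that $pp$-definable bijections are a fortiori definable bijections (so the assignment is well defined on colours). To see $\Phi$ annihilates $\mathcal J$, observe that if $P\supseteq Q$ with $\mathrm{Inv}(M;P,Q)=[P:Q]=n<\infty$, then $P=\bigsqcup_{i=1}^n(a_i+Q)$ for representatives $a_1,\dots,a_n$, and each coset is definably isomorphic to $Q$; hence $[P]=n[Q]$ in $K_0(M_R)$, which is exactly the defining relation of $\mathcal J$. Surjectivity of $\Phi$ is then immediate from the Baur--Monk decomposition in the previous paragraph.

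The main obstacle is injectivity, for which I would construct the inverse $\Psi:K_0(M_R)\to\mathbb Z[\overline{\mathcal X}^*]/\mathcal J$ directly on definable sets. Given $D\in\overline{\mathrm{Def}}(M)$, write $D=\bigsqcup_{i=1}^k(a_i+A_i)$ as above and set $\Psi(D):=\sum_{i=1}^k\delta_{[[A_i]]}$. The work is to show this is independent of the chosen decomposition and descends to $K_0$: two decompositions of the same set can be refined by intersecting, and on a common refinement the equality holds in $\mathbb Z[\overline{\mathcal X}^*]/\mathcal J$ precisely because whenever a coset of $A$ is partitioned by cosets of a smaller $pp$-subgroup $B\leq A$, the index $[A:B]$ is either finite (handled by $\mathcal J$) or the partition must be infinite (in which case $[A:B]=\infty$ forces $\delta_{[[A]]}$ to equal $\delta_{[[A]]}$ trivially, as no finite relation applies). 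Independence under definable bijection reduces, via Baur--Monk applied to the graph of the bijection, to the observation that a definable bijection $A\to B$ between $pp$-definable subgroups forces $[[A]]=[[B]]$ after a finite refinement compatible with the invariants ideal; this is where the subtle cofinality argument using finite invariants enters, and it is the technical heart of the proof.

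Finally, for non-triviality when $M_R\neq 0$, I would exhibit an augmentation-like homomorphism $K_0(M_R)\to\mathbb Z$ or $K_0(M_R)\to\mathbb Z/p\mathbb Z$ sending $[\{*\}]\mapsto 1$, so that $1\neq 0$ in $K_0(M_R)$. Concretely, if $T=T^{\aleph_0}$ then $\mathcal J=0$ and the statement is trivial since the monoid ring is already nonzero; otherwise, one chooses a pair $P\supsetneq Q$ with $\mathrm{Inv}(M;P,Q)=n<\infty$ and uses the assignment $[A]\mapsto|\{\text{cosets of }A\text{ in a fixed large pp-definable set}\}|\pmod n$ to construct a nontrivial homomorphism on $K_0(M_R)$; the existence of at least one nonempty $pp$-definable set (namely $\{0\}$, or $M$ itself) guarantees the image of $1$ is nonzero.
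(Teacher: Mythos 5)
You should first note that the paper does not prove this statement at all: it is imported verbatim from \cite[Theorem~5.2.3, Corollary~5.2.11]{Kuber1}, so there is no in-paper argument to compare yours against. Judged on its own, your outline has the right overall shape (a map $\mathbb Z[\overline{\mathcal X}^*]\to K_0(M_R)$ on colours, check that $\mathcal J$ dies, invert via a decomposition-based class function), but it rests on a false decomposition. The claim that Baur--Monk plus inclusion--exclusion lets you write every definable set as a \emph{finite disjoint union of cosets of $pp$-definable subgroups} is not true. The atoms of the Boolean algebra generated by finitely many cosets are sets of the form $P\setminus\bigcup\beta$ for a finite antichain $\beta$ of cosets properly contained in $P$ --- the ``blocks'' of \cite[Lemma~2.5.7]{Kuber1}, which this paper itself uses in Lemma \ref{defsetshift} --- and a block is generally not a finite union of cosets: already $V\setminus\{0\}$, for $V$ an infinite vector space over an infinite field, is a definable set admitting no such coset decomposition (a vector space over an infinite field is not a finite union of proper affine subspaces, and the unique coset of $V$ itself contains $0$). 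Consequently your inverse map $\Psi(D):=\sum_i\delta_{[[A_i]]}$ is simply not defined on all of $\overline{\mathrm{Def}}(M)$. Surjectivity of $\Phi$ survives this, since $[P\setminus\bigcup\beta]=[P]-[\bigcup\beta]$ and $[\bigcup\beta]$ is an integer combination of classes of $pp$-definable sets by additivity; the construction of the inverse does not.

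Repairing this forces you into the real technical content of \cite{Kuber1}: one must define, for each $\approx$-class $\widetilde{\mathfrak A}$ of colours, an integer-valued invariant $\Lambda_{\widetilde{\mathfrak A}}$ on blocks (a signed multiplicity computed by inclusion--exclusion over the antichain, normalized using the finite indices recorded in $\mathcal J$), prove it is independent of the chosen block decomposition, and --- the step you explicitly defer as ``the technical heart'' --- prove it is invariant under arbitrary definable bijections, which may cut a set into blocks and reassemble them in ways that are not $pp$-definable and need not respect any single subgroup. Your non-triviality argument inherits the same problem: the proposed homomorphism ``count the cosets of $A$ in a fixed large $pp$-definable set mod $n$'' is not well defined (the relevant sets live in different powers $M^m$, and the count depends on the ambient set), whereas the intended route to $K_0(M_R)\neq0$ is to exhibit one of the $\Lambda_{\widetilde{\mathfrak A}}$ as a nonzero additive functional vanishing on $\mathcal J$. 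In short, the skeleton is reasonable, but the two load-bearing steps --- well-definedness of the inverse and non-triviality of the quotient --- are either missing or built on a decomposition that does not exist.
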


\section{$K$-theory for symmetric monoidal groupoids}\label{KSM}
The story of the model-theoretic $K$-theory does not end at $K_0$. We first define a symmetric monoidal category, which is nothing but the ``categorification'' of a commutative monoid. Since definable subsets of a structure under disjoint union form a symmetric monoidal category, with the help of Quillen's construction, higher model-theoretic $K$-groups are introduced in this section. We also recall Bass's definition for $K_1$ that will help us in computations in later sections.

\begin{definition}
A triple $(\C,\ast,e)$ is a \emph{symmetric monoidal category} if the category $S$ is equipped with a bifunctor $\ast:\C\times \C \to S$, and a distinguished object $e$ such that, for all objects $s,t,u\in\C$, there are natural coherent isomorphisms
\begin{displaymath}
e\ast s\cong s\cong s\ast e,\quad s\ast(t\ast u)\cong(s\ast t)\ast u,\quad s\ast t\cong t\ast s,
\end{displaymath}
that satisfy certain obvious commutative diagrams.
  A \emph{pairing} of $(\C,\ast,e)$ is a bifunctor $\otimes:\C\times \C\to \C$ such that $s\otimes e \cong e\otimes s \cong e$, and there is a natural coherent bi-distributivity law
\begin{equation*}
(s_1\ast t_1)\otimes(s_2\ast t_2)\cong(s_1\otimes s_2)\ast(s_1\otimes t_2)\ast(t_1\otimes s_2)\ast(t_1\otimes t_2).
\end{equation*}
\end{definition}
\begin{example}
The category $(\mathrm{FinSets},\sqcup,\emptyset)$ of finite sets and functions between them is a symmetric monoidal category, where $\sqcup$ denotes disjoint union, with pairing given by Cartesian product $\times$. 
\end{example}

We will only work with symmetric monoidal groupoids(categories where all morphisms are isomorphisms) in this paper. If $\C$ is a symmetric monoidal category then its subcategory $\mathrm{iso}\C$ consisting only of isomorphisms is a symmetric monoidal groupoid.

\begin{example}\label{finstructure}
For a first-order $L$-structure $M$, let $\C(M)$ denote the groupoid whose objects are $\overline{\mathrm{Def}}(M)$ and morphisms are definable bijections between definable sets. Then $(\C(M),\sqcup,\emptyset)$ is a symmetric monoidal groupoid, where $\sqcup$ is the disjoint union. Moreover, the Cartesian product of definable sets induces a pairing on this monoidal category.
\end{example}
Quillen's definition of $K$-groups of a symmetric monoidal groupoid $(\C,\ast,e)$ uses his famous $\C^{-1}\C$ construction (see \cite[Chapter~4]{Weibel} for more details).
\begin{definition}\label{KS}
If $(\C,\ast,e)$ is a skeletally small symmetric monoidal groupoid, then the $K$-\emph{theory space} $K^\ast(\C)$ of $\C$ is defined as the geometric realization of $\C^{-1}\C$. Then the $K$-groups of $\C$ are defined as $K_n^\ast(\C):=\pi_nK^\ast(\C)$.
\end{definition}
We use Quillen's definition to associate a sequence of $K$-groups with a model-theoretic structure.
\begin{definition}
For a first order structure $M$, define $K_n(M):=K_n^\sqcup(\C(M))$ for each $n\geq 0$.
\end{definition}

\begin{theorem}[ \protect{\cite[Theorem~IV.4.6]{Weibel}}]\label{pairing}
A pairing on a skeletally small symmetric monoidal groupoid determines a natural pairing $K(\C)\wedge K(\C)\to K(\C)$ of infinite loop spaces, which in turn induces bilinear products $K_p(\C)\otimes K_q(\C)\to K_{p+q}(\C)$. In particular, $K_0(\C)$ is a ring.
\end{theorem}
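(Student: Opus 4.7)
The plan is to lift the given pairing $\otimes\colon\C\times\C\to\C$ to Quillen's $\C^{-1}\C$ construction, pass to geometric realizations, and then invoke the standard procedure for manufacturing smash-product pairings of infinite loop spaces. Recall that objects of $\C^{-1}\C$ are pairs $(a,b)$ of objects of $\C$, thought of as formal differences $a-b$, and that morphisms are equivalence classes of triples $(x,\alpha,\beta)$ in which $x\in\C$ and $\alpha\colon a\ast x\to c$, $\beta\colon b\ast x\to d$ are isomorphisms of $\C$.

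First I would define a bifunctor $\boxtimes\colon(\C^{-1}\C)\times(\C^{-1}\C)\to\C^{-1}\C$ on objects by mimicking the product of formal differences,
\begin{equation*}
(a,b)\boxtimes(c,d):=\bigl((a\otimes c)\ast(b\otimes d),\,(a\otimes d)\ast(b\otimes c)\bigr).
\end{equation*}
The four-term bi-distributivity isomorphism supplied in the definition of a pairing is precisely what is needed to extend this assignment to morphisms: tensoring a translation by an object $x$ in the first coordinate with the identity in the second coordinate produces, after the distributivity rewrite, a well-defined translation of the pair above. The axiom $s\otimes e\cong e\otimes s\cong e$ guarantees that $(e,e)$ is a strict identity for $\boxtimes$ up to coherent isomorphism, so $\boxtimes$ sends the basepoint to the basepoint in each coordinate separately.

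Next I would apply geometric realization $|-|$. Because realization converts products of small categories into products of spaces (up to weak equivalence), $\boxtimes$ becomes a continuous map $K^\ast(\C)\times K^\ast(\C)\to K^\ast(\C)$ that is trivial whenever either coordinate is the basepoint. It therefore factors through the smash product, yielding the desired pairing $K^\ast(\C)\wedge K^\ast(\C)\to K^\ast(\C)$ of pointed spaces. That $K^\ast(\C)$ is an infinite loop space is Quillen's theorem for the $\C^{-1}\C$ construction, so the pairing automatically respects that extra structure. Composing with the standard external product $\pi_p(X)\otimes\pi_q(Y)\to\pi_{p+q}(X\wedge Y)$ on homotopy groups then produces the bilinear maps $K_p(\C)\otimes K_q(\C)\to K_{p+q}(\C)$. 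Specialising to $p=q=0$, associativity, commutativity and bi-distributivity of $\otimes$ descend to the corresponding ring axioms for $\pi_0=K_0(\C)$.

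The hard part is the morphism-level coherence for $\boxtimes$: the formula above is essentially forced on objects, but to obtain a genuine bifunctor on $\C^{-1}\C$ one must verify that it is compatible with the equivalence relation defining morphisms there, and this is where the four-term bi-distributivity isomorphism and its associated coherences actually earn their keep. Once that bookkeeping is settled, everything afterwards is the standard passage from a pairing of skeletally small symmetric monoidal groupoids to a pairing of connective spectra, as carried out in \cite{Weibel}.
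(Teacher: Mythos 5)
This statement is quoted verbatim from \cite[Theorem~IV.4.6]{Weibel}; the paper supplies no proof of its own, so there is nothing internal to compare your argument against. Judged on its merits, your sketch starts in the right place: the object-level formula $(a,b)\boxtimes(c,d)=\bigl((a\otimes c)\ast(b\otimes d),\,(a\otimes d)\ast(b\otimes c)\bigr)$ is indeed the correct categorification of the product of formal differences, and the four-term bi-distributivity isomorphism is what lets you push translations through it.

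The genuine gap is the sentence ``That $K^\ast(\C)$ is an infinite loop space is Quillen's theorem\dots, so the pairing automatically respects that extra structure.'' Nothing about this is automatic: a basepoint-respecting map $K(\C)\times K(\C)\to K(\C)$ that factors through the smash product of \emph{spaces} is a priori just a continuous map, and being a pairing of \emph{infinite loop spaces} (equivalently, of the associated spectra) is precisely the extra compatibility that makes the induced products $K_p(\C)\otimes K_q(\C)\to K_{p+q}(\C)$ bilinear --- additivity in each variable is distributivity of the pairing over the loop-sum on each delooping, which is exactly what must be proved. This is the actual content of the theorem, and in Weibel's text it is delegated to May's multiplicative infinite loop space machinery (pairings of permutative categories and of spectra); your sketch omits it entirely. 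A secondary, more repairable issue: the functor $(e,e)\boxtimes(-)$ is only \emph{naturally isomorphic} to the constant functor at $(e,e)$, not equal to it, so the realization of $\boxtimes$ does not strictly kill the wedge $K(\C)\vee K(\C)$ and the factorization through the smash product exists only after a rectification (e.g.\ a cofibration/mapping-cylinder argument), which you should at least acknowledge. With those two points addressed --- the second by a standard argument, the first by invoking or reconstructing May's pairing machinery --- your outline would become a correct account of the proof that the cited source actually gives.
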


The Barratt-Priddy-Quillen-Segal theorem \cite[Theorem~IV.4.9.3]{Weibel}\label{BPQS} stated below is a very deep theorem connecting the $K$-theory of the apparently simple combinatorial category of finite sets and bijections with the stable homotopy theory of spheres. 
\begin{theorem}
We have $K_n(\mathrm{iso}\mathrm{FinSets})\cong\pi_n^s$ for each $n\geq0$, where $\pi^s_n$ is the $n^{th}$ stable homotopy groups of spheres.
\end{theorem}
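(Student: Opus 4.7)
The plan is to identify the $K$-theory space $K^\sqcup(\mathrm{iso}\mathrm{FinSets})$ with the underlying infinite loop space of the sphere spectrum, and then read off the homotopy groups directly.

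First, I would observe that every finite set is isomorphic to $\{1,2,\ldots,n\}$ for a unique $n\geq 0$, whose automorphism group is the symmetric group $\Sigma_n$. Consequently the classifying space $B(\mathrm{iso}\mathrm{FinSets})$ is homotopy equivalent to the disjoint union $\coprod_{n\geq 0}B\Sigma_n$, and the bifunctor $\sqcup$ equips it with the structure of a topological monoid whose multiplication is induced by the concatenation homomorphisms $\Sigma_m\times\Sigma_n\to\Sigma_{m+n}$.

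Next, Quillen's $S^{-1}S$ construction applied to a symmetric monoidal groupoid provides a model for the group completion of the associated topological monoid; in the case at hand this yields a weak equivalence
\[
K^\sqcup(\mathrm{iso}\mathrm{FinSets})\simeq\Omega B\Bigl(\coprod_{n\geq 0}B\Sigma_n\Bigr).
\]
By the group completion theorem of McDuff--Segal, together with Quillen's plus construction applied to the perfect subgroup $\mathrm{Alt}_\infty\leq\Sigma_\infty:=\mathrm{colim}_n\Sigma_n$, one obtains a homotopy equivalence
\[
\Omega B\Bigl(\coprod_{n\geq 0}B\Sigma_n\Bigr)\simeq\mathbb{Z}\times B\Sigma_\infty^+.
\]
Finally I would invoke the classical Barratt--Priddy theorem, which identifies $\mathbb{Z}\times B\Sigma_\infty^+$ with $QS^0:=\Omega^\infty\Sigma^\infty S^0$, the zeroth space of the sphere spectrum. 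Chaining the equivalences gives $\pi_n K^\sqcup(\mathrm{iso}\mathrm{FinSets})\cong\pi_n(QS^0)=\pi_n^s$ for all $n\geq 0$; the case $n=0$ is consistent with the elementary observation that isomorphism classes of finite sets form $\mathbb{N}$ under $\sqcup$, whose group completion is $\mathbb{Z}=\pi_0^s$.

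The main obstacle is the Barratt--Priddy equivalence $\mathbb{Z}\times B\Sigma_\infty^+\simeq QS^0$, which is the deep input in this chain and the one step that genuinely connects symmetric groups to stable homotopy. Its proof proceeds either through Segal's $\Gamma$-space machinery---realising $\coprod_n B\Sigma_n$ as a special $\Gamma$-space whose associated spectrum is the sphere---or through Priddy's original operadic model for infinite loop spaces. All remaining steps are formal consequences of the definition of $K$-theory via Quillen's construction, the group completion theorem, and the plus construction.
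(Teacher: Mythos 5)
The paper does not actually prove this statement: it is quoted as the Barratt--Priddy--Quillen--Segal theorem and cited directly from Weibel (Theorem~IV.4.9.3), explicitly flagged as a ``very deep theorem'' that the paper uses as a black box. Your outline is the standard proof of that theorem, and its steps are correct and in the right order: the groupoid $\mathrm{iso}\mathrm{FinSets}$ is equivalent to $\coprod_{n\geq 0}\Sigma_n$, so its classifying space is $\coprod_{n\geq 0}B\Sigma_n$; Quillen's construction computes the group completion of this topological monoid (translations are faithful here, so the identification of $K^\sqcup$ with $\Omega B$ of the monoid is legitimate); the group completion theorem together with the plus construction on the perfect subgroup $\mathrm{Alt}_\infty\leq\Sigma_\infty$ yields $\mathbb{Z}\times B\Sigma_\infty^{+}$; and Barratt--Priddy identifies this with $QS^0$, whose homotopy groups are $\pi_n^s$ by definition. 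The only caveat is that your argument is a reduction rather than a proof: essentially all of the mathematical content is deferred to the group completion theorem and the Barratt--Priddy equivalence, which you cite rather than establish --- exactly as the paper defers the entire statement to Weibel. Nothing in your sketch is wrong; it simply relocates the black box one level deeper, and you are candid about where the deep input lies.
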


\begin{example}
Continuing from Example \ref{finstructure}, if $M$ is a finite structure then the above theorem gives $K_n(M)\cong\pi^s_n$. In particular, $K_0(M)=\mathbb Z$ and $K_1(M)=\mathbb Z_2$.
\end{example}

Bass was the first to introduce the groups $K_1$ and $K_2$. We recall his description for $K_1$.
\begin{theorem}\cite{Bass}\label{K1Bass}
Suppose $\C$ is a symmetric monoidal groupoid whose translations are faithful i.e., for all $s,t\in S$, the translation $\mathrm{Aut}_{\C}(s)\to\mathrm{Aut}_{\C}(s\ast t)$ defined by $f\mapsto f\ast id_t$ is an injective map. Then
\begin{equation}
K_1(\C)\cong\varinjlim_{s\in \C}H_1(\mathrm{Aut}_{\C}(s);\mathbb Z),
\end{equation}
where $H_1(G;\mathbb Z)$ denotes the first integral homology group of the group $G$, which is isomorphic to $G^{ab}$.
\end{theorem}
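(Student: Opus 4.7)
The plan is to compute $\pi_1$ of the geometric realization $K^\ast(\C)=|\C^{-1}\C|$ via a cofinality reduction followed by Quillen's group-completion theorem. Since $\C$ is skeletally small, I would first pass to a skeleton and select an increasing cofinal tower $s_1,s_2,\ldots$ with $s_{n+1}=s_n\ast t_n$ in $\C$; by the faithful translation hypothesis, each structure map $\mathrm{Aut}_{\C}(s_n)\hookrightarrow\mathrm{Aut}_{\C}(s_{n+1})$ is injective, so the telescope $T:=\operatorname{hocolim}_n B\mathrm{Aut}_{\C}(s_n)$ is a $K(\pi,1)$-like mapping telescope satisfying $\pi_1(T)=\varinjlim_n\mathrm{Aut}_{\C}(s_n)$ and $H_\ast(T;\Z)=\varinjlim_n H_\ast(\mathrm{Aut}_{\C}(s_n);\Z)$.

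Next, I would establish the key comparison: identify $|\C^{-1}\C|$ as a model for the group completion of $|\C|$ in the sense of McDuff--Segal. Granting this, the group-completion theorem yields, on the identity component,
\begin{equation*}
H_\ast(K^\ast(\C)_{0};\Z)\;\cong\;H_\ast(T;\Z).
\end{equation*}
Since $K^\ast(\C)$ is a grouplike homotopy-commutative $H$-space, its identity component has abelian fundamental group, which is identified with $H_1$ via the Hurewicz homomorphism. Combining these gives
\begin{equation*}
K_1(\C)\;=\;\pi_1(K^\ast(\C))\;\cong\;H_1(T;\Z)\;\cong\;\varinjlim_n H_1(\mathrm{Aut}_{\C}(s_n);\Z),
\end{equation*}
and since abelianization commutes with filtered colimits while cofinality of the chosen tower upgrades the sequential colimit to a colimit over all of $\C$, one obtains $K_1(\C)\cong\varinjlim_{s\in\C}H_1(\mathrm{Aut}_{\C}(s);\Z)$, as required.

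The main obstacle is the middle step: verifying that $|\C^{-1}\C|$ genuinely realizes the group completion of $|\C|$ with tractable homology. This is where the faithful translation hypothesis is indispensable. It ensures that $\pi_0(|\C|)$ acts on $H_\ast(|\C|;\Z)$ through injective stabilization maps, which is precisely the hypothesis required by the McDuff--Segal criterion to identify $H_\ast(\Omega B|\C|)$ with the localization $H_\ast(|\C|)[\pi_0(|\C|)^{-1}]$. Without faithfulness, the translation maps could collapse automorphism groups, producing anomalous contributions to $\pi_1$ that are not captured by the colimit of abelianizations; with it, the only effect of group completion on $\pi_1$ of the identity component is abelianization, which is exactly what the statement records.
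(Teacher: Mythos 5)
The paper offers no proof of this statement: Theorem \ref{K1Bass} is imported from Bass (it is also Theorem~IV.4.8 together with the group-completion theorem IV.4.10 in the cited book of Weibel), so there is no in-paper argument to measure yours against. Judged on its own terms, your sketch is the standard proof of this result and is essentially correct: $|\C^{-1}\C|$ is a group completion of $|\C|$ precisely when translations are faithful (this is Quillen's theorem, and faithfulness is used exactly where you say it is, to make the stabilization maps on homology injective so that the localization $H_\ast(|\C|)[\pi_0^{-1}]$ is computed by a telescope); the identity component of a grouplike homotopy-commutative $H$-space has abelian $\pi_1$, so Hurewicz gives $\pi_1\cong H_1$; and $H_1(-;\Z)$ commutes with filtered colimits, yielding the colimit of abelianizations.

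One caveat: your first step selects a countable cofinal tower $s_1,s_2,\ldots$, which need not exist for an arbitrary skeletally small symmetric monoidal groupoid (the translation category is filtered but can fail to admit a countable cofinal subcategory). This costs you nothing, because the argument never needs sequentiality: replace the mapping telescope by the homotopy colimit of $B\mathrm{Aut}_{\C}(s)$ over the full translation category, which is filtered, and both $\pi_1$ and $H_\ast$ still commute with the colimit. The countable-tower formulation is exactly the extra hypothesis the paper isolates separately in Remark \ref{ctblcofinal} for its applications; it should not be built into the proof of the general theorem.
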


\begin{remark}\label{ctblcofinal}
Suppose that $(\C,\ast,e)$ is a symmetric monoidal groupoid whose translations are faithful. Further suppose that $S$ has a countable sequence of objects $s_1,s_2,\hdots$ such that $s_{n+1}\cong s_n\ast a_n$ for some $a_n\in \C$, and satisfying the cofinality condition that for every $s\in \C$ there is an $s'$ and an $n$ such that $s\ast s'\cong s_n$. In this case, we can form the colimit $\mathrm{Aut}(\C):=\varinjlim_{n \in \mathbb{N}}\mathrm{Aut}_{\C}(s_n)$. Since the functor $H_1(-,\mathbb{Z})$ commutes with colimits, we obtain $K_1(\C)=H_1(\mathrm{Aut}(\C);\mathbb{Z})$.
\end{remark}
\begin{remark}\label{trfaith}
For a model-theoretic structure $M$, translations are faithful in $\C(M)$, and hence Theorem \ref{K1Bass} and the above remark can be used to compute $K_1(M)$.
\end{remark}

\section{$K_1$ of an infinite vector space over an infinite field}\label{K1V}
The goal of this section is to compute the model-theoretic $K_1$-group of a non-zero vector space $V_F$ over a field $F$ when the theory $T$ of $V_F$ is closed under products. Under this hypothesis, it can be readily seen that both $V$ and $F$ are infinite.

The theory $T$ in the language $L_F$ completely eliminates quantifiers \cite[Theorem~2.3.24]{PSL}, and hence every object of $\C:=\C(V_F)$ is a finite boolean combination of the basic definable subsets, viz., $\{0\}, V, V^2, \hdots$ and their cosets.

We start by recalling the description of the Grothendieck ring of a vector space as a special case of Theorem \ref{FINALgeneral}. Note that this result works for any field.
\begin{theorem} \cite[Theorem 4.3.1]{Perera}
The semiring $\widetilde{{Def}}(V_F)$ of definable isomorphism classes of objects of $\C(V_F)$ is isomorphic to the sub-semiring of the polynomial ring $\mathbb{Z}[X]$ consisting of polynomials with non-negative leading coefficients. As a consequence, $K_0(V_F)\cong\mathbb{Z}[X]$.
\end{theorem}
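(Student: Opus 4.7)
My plan is to combine the complete quantifier elimination for $T=Th(V_F)$ with the hypothesis $T=T^{\aleph_0}$ (which forces $V$ and $F$ to be infinite). The only $pp$-definable subgroups of $V^n$ are the $F$-linear subspaces, so cosets are precisely affine subspaces. Hence every definable $D\subseteq V^n$ is a finite Boolean combination of affine subspaces, and inclusion-exclusion expresses the Grothendieck class as $[D]=\sum_i \varepsilon_i [a_i+W_i]$ with $\varepsilon_i\in\{\pm 1\}$ and each $W_i$ an $F$-linear subspace. Since $F$ is infinite, each $k$-dimensional coset $a+W$ is definably isomorphic to $V^k$ via an affine map, so its class is $X^k$, where $X:=[V]$. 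Together these observations produce a semiring morphism
\[\Phi\colon\widetilde{\mathrm{Def}}(V_F)\longrightarrow\mathbb Z[X],\qquad [D]\mapsto\sum_k c_k(D)\,X^k,\]
which is multiplicative because $V^i\times V^j\cong V^{i+j}$ and additive by construction.

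To identify the image, let $d:=\dim(D)$ denote the Morley rank of a non-empty $D$, equivalently the maximum dimension of an affine subspace contained in $D$. The leading coefficient $c_d(D)$ counts the top-dimensional components and is therefore strictly positive, so $\Phi([D])$ has non-negative leading coefficient. Conversely, any polynomial $p(X)=a_dX^d+\cdots+a_0$ with $a_d>0$ is realized by taking $a_d$ parallel cosets of a distinguished $V^d\subseteq V^N$ for large $N$, and then definably adjoining or deleting finitely many lower-dimensional affine pieces to tune the remaining coefficients; infinitude of $F$ guarantees a fresh supply of such pieces at every dimension.

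The hard step is injectivity: two definable sets with the same image polynomial must admit a definable bijection. My plan is to put each such $D$ into a canonical form, realized as a disjoint union of ``open cells'' of the form $V^k\setminus(\text{finitely many proper affine subspaces})$, by iteratively applying inclusion-exclusion and unifying cells of equal dimension via affine automorphisms. Homogeneity of $V$ under its affine group lets us reshuffle cells of the same dimension into a standard position, and the polynomial encodes exactly the number of cells at each dimension (modulo inclusion-exclusion adjustments). Thus equality of polynomials yields equal canonical forms and assembles the required bijection piecewise. Verifying that the canonical form is well-defined, i.e.\ independent of the initial Boolean decomposition of $D$, is where the bulk of the technical work sits.

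Finally, the corollary $K_0(V_F)\cong\mathbb Z[X]$ is automatic: every polynomial $p\in\mathbb Z[X]$ is a difference of two polynomials with positive leading coefficient (for instance $p=(p+CX^N)-CX^N$ for $C,N$ large enough), so the ring completion of the image of $\Phi$ is all of $\mathbb Z[X]$, while ring completion commutes with the semiring isomorphism $\Phi$.
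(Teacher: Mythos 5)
The paper offers no proof of this statement---it is imported wholesale from Perera's thesis (Theorem~4.3.1 there)---so there is nothing internal to compare against; judged on its own terms, your outline follows what is essentially the standard argument (a dimension-graded decomposition into cells, a counting homomorphism into $\mathbb{Z}[X]$, surjectivity by explicit construction, injectivity via a normal form), and that strategy is the right one. One imprecision should be fixed at the outset: the pp-definable subgroups of $V^n$ are not ``the $F$-linear subspaces'' but only those cut out by $F$-linear equations, i.e.\ subgroups of the form $V\otimes_F S$ for a subspace $S\leq F^n$; a line $vF\subseteq V$ is an $F$-subspace that is not definable. This is not pedantry: it is exactly this restriction that makes every non-empty pp-definable set definably isomorphic to some $V^k$ (so that the single variable $X=[V]$ suffices), and it is why every \emph{proper} inclusion of pp-definable subgroups strictly drops dimension, which is what prevents inclusion-exclusion from cancelling the leading coefficient. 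Note also that only the infinitude of $V$ (equivalently, of the index of every proper pp-definable subgroup) is used anywhere; the paper remarks the result holds over any field, so invoking $T=T^{\aleph_0}$ and infinitude of $F$ needlessly narrows the statement.

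The genuine gap is in the injectivity step, and it is larger than your phrase ``modulo inclusion-exclusion adjustments'' suggests. Affine automorphisms and homogeneity can only match cells with \emph{identical} hole configurations, and the polynomial does not encode ``the number of cells at each dimension'': $(V^2\setminus \ell)\sqcup \ell'$ and $V^2$ (with $\ell,\ell'$ cosets of $V\times\{0\}$) have the same image $X^2$ but different cell counts, while $V^2$ and $V^2\setminus\{p\}$ have identical cell counts but images $X^2$ and $X^2-1$. What is actually needed are cut-and-paste lemmas that trade a $k$-dimensional hole inside a higher-dimensional block against a free-standing $k$-dimensional piece, and that normalize the holes within a single block, using the fact that every proper pp-definable subgroup has infinitely many cosets; these bijections are genuinely piecewise, not affine, and no affine reshuffling will produce them. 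The same issue underlies the well-definedness of your map $\Phi$ on the \emph{semiring}, where subtraction is unavailable: you must show that the integer coefficients extracted from one Boolean decomposition agree with those from any other and are invariant under arbitrary definable bijections. This is precisely the content of the invariants $\Lambda_{\widetilde{\mathfrak A}}$ of \cite[\S~5.2]{Kuber1} and of Perera's proof, so your proposal correctly locates the hard part but does not yet supply the argument that carries it.
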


The main result of this section is the computation of $K_1(V_F)$.
\begin{theorem}\label{K1FINAL}
Suppose $V_F$ is an infinite vector space over an infinite field $F$ and $F^\times$ is the group of units in $F$. Then
\begin{equation}
K_1(V_F)\cong\mathbb Z_2\oplus\left(\bigoplus_{i=1}^\infty(F^\times\oplus\mathbb Z_2)\right).
\end{equation}
\end{theorem}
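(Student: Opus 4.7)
The plan is to invoke the Bass description $K_1(V_F)\cong\varinjlim_n\Omega(V^n)^{ab}$ from Theorem~\ref{K1Bass} via Remark~\ref{ctblcofinal}, where $\Omega(V^n)$ denotes the group of definable self-bijections of $V^n$ in $\C(V_F)$, reducing the problem to computing each abelianization together with the transition maps. I would dissect $\Omega(V^n)$ along its normal chain
$$\Omega_0(V^n)\trianglelefteq\Omega_1(V^n)\trianglelefteq\cdots\trianglelefteq\Omega_n(V^n)=\Omega(V^n),$$
and identify each successive quotient. The base is immediate: a definable bijection fixing all elements outside a $0$-dimensional (hence finite) subset of $V^n$ must be a finitary permutation, so $\Omega_0(V^n)=\FS(V^n)$, whose abelianization is $\Z_2$ by Proposition~\ref{commfinperm}.

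For each $k\in\{1,\dots,n\}$, I would establish $\Omega_k(V^n)/\Omega_{k-1}(V^n)\cong\operatorname{Aff}_k(F)\wr\FS(T_k)$, where $\operatorname{Aff}_k(F)=\GL[k](F)\ltimes F^k$ is the affine group and $T_k$ is the set of $k$-dimensional cosets of $V^n$ modulo lower-dimensional noise (infinite for $k<n$, a singleton for $k=n$). The two key inputs are the Baur--Monk decomposition of definable bijections into piecewise $pp$-definable pieces on cosets of $k$-dimensional affine subspaces, and the hypothesis $T=T^{\aleph_0}$, which eliminates every nontrivial finite-index invariant so that $pp$-definable bijections between two such cosets form a torsor over $\operatorname{Aff}_k(F)$.

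The abelianization computation then follows routinely from \S\ref{SemGrP}. Lemma~\ref{semiab} gives $\operatorname{Aff}_k(F)^{ab}\cong\GL[k](F)^{ab}\times(F^k)^{ab}_{\GL[k](F)}$; since $|F|\geq 3$, choosing any $u\in F^\times$ with $u\neq 1$ makes $u-1$ a unit, which forces the coinvariants to vanish for every $k\geq 1$, and since $F$ is infinite, $\operatorname{SL}_k(F)$ is perfect so the determinant gives $\GL[k](F)^{ab}\cong F^\times$. Hence $\operatorname{Aff}_k(F)^{ab}\cong F^\times$, and Lemma~\ref{wt} yields $(\operatorname{Aff}_k(F)\wr\FS(T_k))^{ab}\cong F^\times\oplus\Z_2$ for $k<n$ (with only an $F^\times$ factor at $k=n$). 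An induction on $n$, using the fact that the $pp$-definable affine groups supply honest sections to all the extensions $\Omega_{k-1}\trianglelefteq\Omega_k$, then yields $\Omega(V^n)^{ab}\cong\Z_2^n\oplus(F^\times)^n$. The inclusion $\Omega(V^n)^{ab}\hookrightarrow\Omega(V^{n+1})^{ab}$ promotes the old top-level $F^\times$ to a full $F^\times\oplus\Z_2$ summand (since there are now many cosets of an $n$-dimensional subspace), so passing to the colimit produces the stated formula.

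The principal obstacle I foresee is the middle step: identifying $\Omega_k/\Omega_{k-1}$ precisely as the wreath product $\operatorname{Aff}_k(F)\wr\FS(T_k)$ and verifying that the resulting normal tower splits in an abelianization-compatible way. Concretely, one must argue from quantifier elimination and $T=T^{\aleph_0}$ that a generic element of $\Omega_k$ admits a canonical decomposition into a finitary permutation of $k$-cosets together with affine bijections on each coset, uniquely up to $\Omega_{k-1}$, and that the natural affine sections intertwine compatibly with all lower strata. This bookkeeping will be the heart of the proof and will serve as the template for the PID computation carried out in the subsequent sections.
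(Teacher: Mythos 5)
Your proposal follows essentially the same route as the paper: Bass' colimit description, the normal chain $\Omega_0(V^n)\trianglelefteq\cdots\trianglelefteq\Omega_n(V^n)$ with each layer split as $(\GL[k](F)\ltimes V^k)\wr\FS(T_k)$ acting on the stratum below, vanishing of the $V^k$-coinvariants via a unit $u$ with $u-1$ invertible, and the abelianization lemmas for semi-direct and wreath products, yielding $(\Omega(V^n))^{ab}\cong(F^\times)^n\oplus\Z_2^n$ and the stated colimit. The ``bookkeeping'' you flag as the heart of the argument is exactly what the paper's Step I (stabilization of $\Omega_m(D)$ for $\dim D>m$) and Step II (the cofinal sequence $S_{m,1}\subset S_{m,2}\subset\cdots$ of disjoint copies of $V^m$) supply, together with the one-line observation that conjugation by higher strata preserves determinants and parities so the coinvariants do not collapse.
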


We divide the proof of this result in three steps.

\noindent{\textbf{Step I:}} In this step, we associate a ``dimension'' $\dim(f)$ to each automorphism $f$ of a definable set through its ``support'', and show that the groups of bounded-dimension automorphisms of sufficiently large definable sets are isomorphic.

First note that the assignment $\dim(\emptyset):=-\infty,\ \dim(D):=\deg([D])$ for $\emptyset\neq D\in\C$ is a well-defined dimension function on the objects of $\C$, where $[D]$ denotes the class of $D$ in the Grothendieck ring.

\begin{definition}\label{supp}
Let $D\in\C$ and $f\in\A(D)$. The \emph{support of $f$} is the (definable) set $\operatorname{Supp}(f):=\{a\in D: f(a)\neq a\}$. If $f\neq\id_{D}$ then set $\dim(f):=\dim(\operatorname{Supp}(f))$; otherwise set $\dim(\id_{D}):=-\infty$.
\end{definition}

\begin{proposition}\label{autinftydim}
For $D\in\C$, let $\Omega_m(D):=\{f\in\A(D):\dim(f)\leq m\}$ be the subgroup of $\A(D)$ of elements fixing all automorphisms of $D$ outside a subset of dimension at most $m$. If $D_1,D_2\in\C$ have dimension strictly greater than $m$, then $\Omega_m(D_1)\cong\Omega_m(D_2)$.
\end{proposition}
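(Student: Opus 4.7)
The plan is to realize $\Omega_m(D)$ as a directed colimit of automorphism groups of ``small'' definable subsets, and then match such colimits for $D_1$ and $D_2$ by a back-and-forth construction that exploits the polynomial description of definable isomorphism classes in $V_F$.

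First I would note that for any $f\in\A(D)$, the support $\operatorname{Supp}(f)$ is definable, and $f$ permutes $\operatorname{Supp}(f)$: if $a\in\operatorname{Supp}(f)$ but $f(a)\notin\operatorname{Supp}(f)$, then $f(f(a))=f(a)$ and applying $f^{-1}$ gives $f(a)=a$, a contradiction. Hence $f$ restricts to a definable bijection of $\operatorname{Supp}(f)$, and conversely every $g\in\operatorname{Aut}_\C(E)$ for a definable $E\subseteq D$ extends by the identity on $D\setminus E$ to an element of $\A(D)$ with support inside $E$. Writing $\mathcal E_m(D)$ for the poset of definable subsets $E\subseteq D$ with $\dim(E)\leq m$ under inclusion (directed, since $\dim(E\cup E')\leq\max(\dim E,\dim E')\leq m$), this identifies
\[
\Omega_m(D)\;=\;\varinjlim_{E\in\mathcal E_m(D)}\operatorname{Aut}_\C(E),
\]
with transition maps given by extension by the identity.

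The key technical ingredient is then a definable embedding lemma: whenever $D$ is definable with $\dim(D)>m$, it can accommodate up to definable isomorphism every definable set of dimension at most $m$, and moreover can accommodate nested extensions. Precisely, given an inclusion $E\subseteq E'$ of abstract definable sets with $\dim(E')\leq m$ and a definable embedding $E\hookrightarrow D$ with image $F$, one can extend to a definable embedding $E'\hookrightarrow D$ whose image $F'$ contains $F$. This uses the description of $\widetilde{\mathrm{Def}}(V_F)$ as polynomials in $[V]$ with non-negative leading coefficient: $E'\setminus E$ is a definable set of dimension at most $m$, whereas $D\setminus F$ has dimension $\dim(D)>m$, so its polynomial representative has enough room to absorb a copy of $E'\setminus E$; concretely, an $(m{+}1)$-dimensional piece of $D\setminus F$ splits into infinitely many disjoint cosets of $V^m$, which between them realize any polynomial of degree $\leq m$.

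With this lemma in hand, a (possibly transfinite) back-and-forth between well-orderings of $\mathcal E_m(D_1)$ and $\mathcal E_m(D_2)$ produces cofinal chains $(E_\alpha)$ in $\mathcal E_m(D_1)$ and $(F_\alpha)$ in $\mathcal E_m(D_2)$ together with definable isomorphisms $\phi_\alpha:E_\alpha\to F_\alpha$ that restrict correctly to earlier stages: at odd stages one extends along $D_1$ using a target that has not yet appeared there, and at even stages one does the symmetric extension in $D_2$, in each case invoking the embedding lemma to place the new piece in the complement of what is already used. The resulting compatible family of conjugation isomorphisms $\operatorname{Aut}_\C(E_\alpha)\cong\operatorname{Aut}_\C(F_\alpha)$ induces the desired isomorphism $\Omega_m(D_1)\cong\Omega_m(D_2)$ on colimits. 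The principal obstacle I anticipate is the embedding lemma itself: assembling an arbitrary polynomial $\sum_{i\leq m}n_i[V]^i$ as a disjoint union of definable pieces inside a prescribed dimension-$>m$ definable set, in a way that respects a sub-polynomial already placed there, requires careful bookkeeping of cosets and their definable isomorphisms.
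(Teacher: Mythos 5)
Your first two steps are sound and close in spirit to the paper: the identification $\Omega_m(D)=\varinjlim_{E\in\mathcal E_m(D)}\operatorname{Aut}_{\C}(E)$ is correct, and your embedding lemma is essentially the cofinality statement the paper also needs. The genuine gap is in the back-and-forth: inclusion-cofinal well-ordered chains in $\mathcal E_m(D)$ do not exist in general, so the object you propose to construct is not there to be constructed. Two separate obstructions arise. First, a definable set $E$ with $\dim(E)\leq m$ can contain only finitely many pairwise disjoint definable subsets of dimension $m$ (their number is bounded by the coefficient of $X^m$ in $[E]$), so at any limit stage $\lambda$ a set $E_\lambda\supseteq\bigcup_{\alpha<\lambda}E_\alpha$ of dimension $\leq m$ cannot exist once the earlier stages contain unboundedly many disjoint copies of $V^m$ --- which they must, if the chain is to be cofinal. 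Second, even a chain of length $\omega$ cannot be cofinal when $V$ is uncountable: an $(m{+}1)$-dimensional piece of $D$ contains uncountably many pairwise disjoint cosets of an $m$-dimensional subspace, each of which must lie in some $E_k$, yet each $E_k$ absorbs only finitely many of them. The same counting argument rules out \emph{any} totally ordered cofinal family in $\mathcal E_m(D)$. So for uncountable $V$ or $F$ (e.g.\ any vector space over $\mathbb R$) your construction cannot get off the ground, while the proposition is claimed for all infinite $V_F$.

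The paper sidesteps this by changing the notion of cofinality. It first repositions $D_2$ by a definable bijection so that one may assume $D_1\subseteq D_2$, and then uses the single countable sequence $S_1\subset S_2\subset\cdots$ of disjoint unions of copies of $V^m$ inside $D_1$, which is cofinal in $\C_m(D_1)$ and $\C_m(D_2)$ only in the stable, monoidal sense of Remark \ref{ctblcofinal}: every object $E$ of dimension $\leq m$ satisfies $E\sqcup E'\cong S_n$ for some $E'$ and $n$ (this is exactly what your polynomial bookkeeping delivers), not $E\subseteq S_n$. With that weaker requirement a countable chain suffices, both groups are computed as the same colimit $\varinjlim_k\A(S_k)$, and no transfinite matching is needed. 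To salvage your route you would have to replace chains by cofinal \emph{directed subposets} and run the back-and-forth on those, which is considerably more delicate than what you describe; as written, the argument fails at the point where the chains are assumed to exist.
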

\begin{proof}
Since $\dim D_1,\dim D_2>m$, it is always possible to find an arrow $g:D_2\to D$ in $\C$ such that $\dim(D_1\cap D)>m$. The definable bijection $g$ induces an isomorphism between $\Omega_m(D_2)$ and $\Omega_m(D)$. Therefore, it is sufficient to prove the result when $D_1\subseteq D_2$.

For each $i=1,2$, consider the full subcategory $\C_m(D_i)$ of $\C$ containing definable subsets of $D_i$ of dimension at most $m$. The restriction of $\sqcup$ to $\C_m(D_i)$ equips it with a symmetric monoidal structure. Then $\Omega_m(D_i)\cong\mathrm{Aut}(\C_m(D_i))$, where the groups on the right hand side can be constructed as follows.

Let $S_1\subset S_2\subset\cdots$ be a sequence of objects of $\C_m(D_1)$, where $S_1$ is a copy of $V^m$ in $D_1$ and $S_{k+1}$ is obtained by adding a disjoint copy of $V^m$ to $S_k$ for each $k\geq 1$. This sequence is cofinal in $\C_m(D_1)$ and thus, using Remark \ref{ctblcofinal} 
for this sequence, we construct $\mathrm{Aut}(\C_m(D_1))$ as $\varinjlim_{k\in\mathbb N}\A(S_k)$. When $D_1\subseteq D_2$, the same colimit can be used to construct the group $\Omega_m(D_2)\cong\mathrm{Aut}(\C_m(D_2))$. Hence $\Omega_m(D_1)$ is isomorphic to $\Omega_m(D_2)$.
\end{proof}

\noindent{\textbf{Step II:}} In this step, we express groups $\Omega_n^n:=\A(V^n)$ as iterated semi-direct products, where the iterations are indexed by dimensions.

For each $0\leq m<n$, let $\Omega^n_m:=\Omega_m(V^n)$ and $\Sigma^n_m$ 
denote the finitary permutation group on a countable set of cosets of an $m$-dimensional subspace of $V^n$. If $n,p>m$, then it is easy to see that $\Sigma^n_m\cong\Sigma^p_m$. To compute groups $\Omega^n_m$, we construct a sequence $S_{m,1}\subset S_{m,2}\subset\hdots$ of objects of $\C_m(V^n)$, where $S_{m,k}$ is a disjoint union of $k$ copies of $V^m$ in $V^n$ as described in the above proposition. Note that $\Omega^n_0=\Sigma^n_0$.

We have a chain of normal subgroups of $\Omega^n_n$:
\begin{equation}
\Omega^n_0\lhd\Omega^n_1\lhd\cdots\lhd\Omega^n_{n-1}\lhd\Omega^n_n.
\end{equation}
For each $n\geq 1$, let $\Upsilon^n$ denote the subgroup of $\A(V^n)$ consisting only of definable linear (i.e., $pp$-definable) bijections. In other words, $\Upsilon^n$ is the group $\GL[n](F)\ltimes V^n$, where $\GL[n](F)$ acts on $V^n$ by matrix multiplication. Note that for each $f(\neq\id_{V^n})\in\Upsilon^n$, we have $\dim(f)=n$. The group $\Upsilon^n$ acts on $\Omega^n_{n-1}$ by conjugation and, in fact, $\Omega^n_n=\Upsilon^n\ltimes\Omega^n_{n-1}$.

For $0<m<n$, we want to find a subgroup $\Upsilon^n_m$ of $\Omega^n_m$ such that $\Omega^n_m=\Upsilon^n_m\ltimes\Omega^n_{m-1}$. To do this, we look at the construction of the colimit in Remark \ref{ctblcofinal}. Note that
\begin{equation*}
\A(S_{m,1})\cong\Omega^m_m\cong\Upsilon^m\ltimes\Omega^m_{m-1}\cong\Upsilon^m\ltimes\Omega^n_{m-1},
\end{equation*}
where the action of $\Upsilon^m$ on $\Omega^n_{m-1}$ is induced by the isomorphism $\Omega^m_{m-1}\cong\Omega^n_{m-1}$ given by Proposition \ref{autinftydim}. For similar reasons, we also have
\begin{equation*}
\A(S_{m,k})\cong(\Upsilon^m\wr\Sigma_k)\ltimes\Omega^m_{m-1}\cong(\Upsilon^m\wr\Sigma_k)\ltimes\Omega^n_{m-1},
\end{equation*}
where $\Sigma_k$ is the permutation group on $k$ elements, the group $(\Upsilon^m\wr\Sigma_k)$ acts on $\Omega^m_{m-1}$ by conjugation and permutes lower dimensional subsets of $S_{m,k}\subset V^n$. Thus
\begin{eqnarray*}
\Omega^n_m&\cong&\varinjlim_{k\in \mathbb N}\A(S_{m,k})\\
&\cong&\varinjlim_{k\in \mathbb N}\left((\Upsilon^m\wr\Sigma_k)\ltimes\Omega^n_{m-1}\right)\\
&\cong&\left(\varinjlim_{k\in \mathbb N}(\Upsilon^m\wr\Sigma_k)\right)\ltimes\Omega^n_{m-1}\\
&\cong&\left(\Upsilon^m\wr\Sigma^n_m\right)\ltimes\Omega^n_{m-1}.
\end{eqnarray*}
Define $\Upsilon^n_m:=\Upsilon^m\wr\Sigma^n_m$ which acts on $\Omega^n_{m-1}$ by conjugation. Thus each $\Omega^n_n$ is an iterated semi-direct product of certain wreath products.
\begin{eqnarray}\Omega^n_n&\cong&\Upsilon^n\ltimes\Omega^n_{n-1}\nonumber\\
&\cong&\Upsilon^n\ltimes(\Upsilon^n_{n-1}\ltimes\Omega^n_{n-2})\nonumber\\
&\cong&\Upsilon^n\ltimes(\Upsilon^n_{n-1}\ltimes(\Upsilon^n_{n-2}\ltimes\Omega^n_{n-3}))\nonumber\\
&\cong&\Upsilon^n\ltimes(\Upsilon^n_{n-1}\ltimes(\Upsilon^n_{n-2}\ltimes(\cdots(\Upsilon^n_1\ltimes\Omega^n_0)\cdots))). \label{autitsemiwr}
\end{eqnarray}

\noindent{\textbf{Step III:}} Here we compute $(\Omega^n_n)^{ab}$ using results in \S~\ref{SemGrP} so that we can compute $K_1(V_F)$ as $\varinjlim_{n\in\mathbb N}(\Omega^n_n)^{ab}$ using Theorem \ref{K1Bass} and Remark \ref{ctblcofinal} gives, thanks to Remark \ref{trfaith}.

\begin{proposition}\label{Omega}
    For each $n\geq 1$, we have
\begin{equation}
(\Omega^n_n)^{ab}\cong F^\times\oplus\bigoplus_{i=1}^{n-1}(F^\times\oplus\mathbb Z_2)\oplus\mathbb Z_2.
\end{equation}
\end{proposition}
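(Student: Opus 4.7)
The plan is to proceed by induction on $k$ for $0\leq k\leq n$, computing $(\Omega^n_k)^{ab}$ by iterated application of Lemma \ref{semiab} to the decomposition (\ref{autitsemiwr}). The base case $k=0$ is covered by Proposition \ref{commfinperm}, which gives $(\Omega^n_0)^{ab}\cong\mathbb Z_2$. For the inductive step, the factorizations $\Omega^n_k\cong\Upsilon^n_k\ltimes\Omega^n_{k-1}$ for $1\leq k<n$ and $\Omega^n_n\cong\Upsilon^n\ltimes\Omega^n_{n-1}$, combined with Lemma \ref{semiab}, yield
\[(\Omega^n_k)^{ab}\cong(\Upsilon^n_k)^{ab}\times\left((\Omega^n_{k-1})^{ab}\right)_{\Upsilon^n_k},\]
with $\Upsilon^n_k$ replaced by $\Upsilon^n$ at the top.

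The building blocks would be computed as follows. Applying Lemma \ref{semiab} to $\Upsilon^m=\GL[m](F)\ltimes V^m$, the first factor $\GL[m](F)^{ab}\cong F^\times$ via the determinant, using that $F$ is infinite so $[\GL[m](F),\GL[m](F)]=\operatorname{SL}_m(F)$; the coinvariants $(V^m)_{\GL[m](F)}$ are trivial because $\GL[m](F)$ acts transitively on $V^m\setminus\{0\}$, forcing all nonzero vectors to share a common class $c$ in the coinvariants, after which picking $v,v'\neq 0$ with $v+v'\neq 0$ gives $c+c=c$ and hence $c=0$. Thus $(\Upsilon^m)^{ab}\cong F^\times$, and Lemma \ref{wt} yields $(\Upsilon^n_m)^{ab}=(\Upsilon^m\wr\Sigma^n_m)^{ab}\cong F^\times\oplus\mathbb Z_2$ for $1\leq m<n$, while $(\Upsilon^n)^{ab}\cong F^\times$ at the top.

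The crux of the induction is to check that the conjugation action of $\Upsilon^n_k$ (or of $\Upsilon^n$ at the top) on $(\Omega^n_{k-1})^{ab}$ is trivial, so that the coinvariants coincide with the full abelianization. By the inductive description, $(\Omega^n_{k-1})^{ab}$ decomposes via natural projections onto $(\Omega^n_0)^{ab}=\mathbb Z_2$ (the permutation sign) and, for each $1\leq m<k$, onto $(\Upsilon^n_m)^{ab}=F^\times\oplus\mathbb Z_2$ (the product of the $\Upsilon^m$-components' determinants over cosets, together with the sign of the $\Sigma^n_m$-permutation). Since each $\Omega^n_m$ is normal in $\Omega^n_n$, conjugation by $g\in\Upsilon^n_k$ preserves the chain and induces automorphisms on the quotients $\Upsilon^n_m\cong\Omega^n_m/\Omega^n_{m-1}$ that relabel cosets through $g$ and conjugate the $\Upsilon^m$-components. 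Permutation signs and determinants are class functions, and the total determinant product across cosets is unaffected by reindexing, so the induced map on each layer's abelianization is the identity.

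Assembling the induction gives $(\Omega^n_n)^{ab}\cong F^\times\oplus\bigoplus_{i=1}^{n-1}(F^\times\oplus\mathbb Z_2)\oplus\mathbb Z_2$, as claimed. The main obstacle I anticipate is the triviality-of-action step: it requires unpacking how conjugation by an element of a higher layer $\Upsilon^n_k$ interacts with the coset-and-affine-map data of an element of $\Upsilon^n_m$ for $m<k$, and confirming that this action becomes invisible after passing to abelianization.
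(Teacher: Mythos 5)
Your overall strategy coincides with the paper's: peel off the iterated semi-direct product \eqref{autitsemiwr} one layer at a time using Lemma \ref{semiab}, compute $(\Upsilon^m)^{ab}\cong F^\times$ and $(\Upsilon^n_m)^{ab}\cong F^\times\oplus\mathbb Z_2$ via Lemma \ref{wt}, and observe that the conjugation actions become trivial on abelianizations because they preserve determinants and permutation parities. That last point is exactly the (admittedly terse) justification the paper gives, so your ``main obstacle'' is handled the same way there.

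There is, however, one step that fails as stated: your claim that the coinvariants $(V^m)_{\GL[m](F)}$ vanish \emph{because} $\GL[m](F)$ acts transitively on $V^m\setminus\{0\}$. That action is not transitive in general: the orbit of a tuple $(v_1,\dots,v_m)$ is contained in $\bigl(\operatorname{span}_F\{v_1,\dots,v_m\}\bigr)^m$, so already for $m=1$ the orbits of $\GL[1](F)=F^\times$ on $V\setminus\{0\}$ are the punctured lines through $0$, of which there are many once $\dim_F V>1$ (and $V$ is an arbitrary infinite $F$-vector space here). The conclusion is still correct, and the repair only needs scalar matrices: fix any $a\in F^\times$ with $a\neq 1$ (possible since $F$ is infinite); then for every $v\in V^m$ one has $v=(aI_m)v'-v'$ with $v'=(a-1)^{-1}v$, so every element of $V^m$ already lies in the subgroup $\langle wA-w\mid A\in\GL[m](F),\,w\in V^m\rangle$ and the coinvariants are trivial. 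This is the argument the paper uses; your ``$c+c=c$'' trick also works if you restrict it to the $F^\times$-orbit of a single nonzero $v$ rather than invoking transitivity on all of $V^m\setminus\{0\}$.
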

\begin{proof}
Fix $n\geq 1$ and $0\leq m<n$. We use the presentation of $\Omega^n_n$ given in Equation \eqref{autitsemiwr}.

We know that $\Upsilon^n\cong\GL[n](F)\ltimes V^n$. Since the additive group $V^n$ is abelian, Lemma \ref{semiab} gives $(\Upsilon^n)^{ab}\cong(\GL[n](F))^{ab}\times (V^n)_{\GL[n](F)}$. For any $a(\neq 1)\in F^\times$ (which exists since the field $F$ is infinite), we have $aI_n\in\GL[n](F)$, where $I_n$ is the identity matrix. Now each $v\in V^n$ can be expressed as $(aI_n)v'-v'$ for $v'=(a-1)^{-1}v$. Thus the quotient $(V^n)_{\GL[n](F)}$ of $V^n$ is trivial which, in turn, gives $(\Upsilon^n)^{ab}\cong (\GL[n](F))^{ab}\cong F^\times$.

Recall from Proposition \ref{commfinperm} that $(\Sigma^n_m)^{ab}\cong\mathbb Z_2$. Lemma \ref{wt} applied to $\Upsilon^n_m=\Upsilon^m\wr\Sigma^n_m$ gives $(\Upsilon^n_m)^{ab}\cong(\Upsilon^m)^{ab}\times\mathbb Z_2\cong F^\times\oplus\mathbb Z_2$. The group $\Upsilon^n_m$ acts on $\Omega^n_{m-1}$ by conjugation. Recall that the action of $\Upsilon^n_m$ preserves the determinant of a matrix in $\GL[n](F)$ and the parity of a permutation. Thus repeated use of Lemma \ref{semiab} gives
\begin{eqnarray*}
(\Omega^n_n)^{ab}&\cong&(\Upsilon^n\ltimes(\Upsilon^n_{n-1}\ltimes(\cdots(\Upsilon^n_1\ltimes\Omega^n_0)\cdots)))^{ab}\\
&\cong&(\Upsilon^n)^{ab}\oplus((\Upsilon^n_{n-1}\ltimes(\cdots(\Upsilon^n_1\ltimes\Omega^n_0)\cdots))^{ab})_{\Upsilon^n}\\
&\cong&(\Upsilon^n)^{ab}\oplus((\Upsilon^n_{n-1})^{ab}\oplus((\cdots(\Upsilon^n_1\ltimes\Omega^n_0)\cdots)^{ab})_{\Upsilon^n_{n-1}})_{\Upsilon^n}\\
&\cong&(\Upsilon^n)^{ab}\oplus((\Upsilon^n_{n-1})^{ab}\oplus(\cdots ((\Upsilon^n_1)^{ab}\oplus((\Omega^n_0)^{ab})_{\Upsilon^n_1})_{\Upsilon^n_2}\cdots)_{\Upsilon^n_{n-1}})_{\Upsilon^n}\\
&\cong&F^\times\oplus((F^\times\oplus\mathbb Z_2)\oplus(\cdots((F^\times\oplus\mathbb Z_2)\oplus(\mathbb Z_2)_{\Upsilon^n_1})_{\Upsilon^n_2}\cdots)_{\Upsilon^n_{n-1}})_{\Upsilon^n}\\
&\cong&F^\times\oplus((F^\times\oplus\mathbb Z_2)\oplus(\cdots((F^\times\oplus\mathbb Z_2)\oplus\mathbb Z_2)\cdots))\\
&\cong&F^\times\oplus\bigoplus_{i=1}^{n-1}(F^\times\oplus\mathbb Z_2)\oplus\mathbb Z_2,
\end{eqnarray*}
as required.
\end{proof}
In the construction of the sequence $S_{n,1}\subset S_{n,2}\subset\cdots$ to compute $\mathrm{Aut}(\C_n(V^{n+1}))$, we can choose the copy $V^n\times\{0\}$ as $S_{n,1}$. This induces an embedding of $\Omega^n_n$ into $\Omega^{n+1}_n\lhd\Omega^{n+1}_{n+1}$. This further induces the dimension preserving inclusion of $(\Omega^n_n)^{ab}$ into $(\Omega^{n+1}_{n+1})^{ab}$. Hence
\begin{eqnarray*}
K_1(V_F)&\cong&\varinjlim_{n\in \mathbb N}(\Omega^n_n)^{ab}\\
&\cong&\varinjlim_{n\in \mathbb N}\left(F^\times\oplus\bigoplus_{i=1}^{n-1}(F^\times\oplus\mathbb Z_2)\oplus\mathbb Z_2\right)\\
&\cong&  \Z_2\oplus \left(\bigoplus_{i=1}^\infty(F^\times\oplus\mathbb Z_2)\right).
\end{eqnarray*}
This completes the proof of Theorem \ref{K1FINAL}.

\section{$K_1$ of a free module over a PID}\label{K1pid}
Throughout this section, $R$ will denote a unital PID, $M_R$ an infinite free $R$-module and $\C:=\C(M_R)$ unless stated otherwise. The main goal of this section is to compute $K_1(M_R)$. We will try to follow the proof of the vector space case (Theorem \ref{K1FINAL}), and identify the obstacles while doing so. Our goal is to prove Theorem \ref{mainth} that describes $K_1$ as a directed colimit of abelianizations of certain groups; this description works for free modules over any PID.

\noindent{\textbf{Step I:}} We defined the dimension of $D\in\C(V_F)$ as the degree of the polynomial $[D]\in\mathbb Z[X]$, but we do not have that luxury in $\C(M_R)$. Thus first we will use an alternate method to associate the notion of a non-negative integer-valued dimension for the objects of $\C(M_R)$. Recall from \S~\ref{MTM} that $\mathcal{L}_n:= \mathcal{L}_n(M_R)$ denote the meet-semilattice of all $pp$-definable subsets of $M^n$ and $\overline{\mathcal X}^*:=\overline{\mathcal X}^*(M_R)=\overline{\mathcal X}(M_R)\setminus[[\emptyset]]$ is the set of colours, i.e., the $pp$-definable isomorphism classes of non-empty $pp$-definable sets.

Let us recall some definitions and notations from \cite[\S~5.1,5.2]{Kuber1}. Let $(-)^\circ:\mathcal L_n\rightarrow \mathcal L_n$ denote the function which takes a coset $P$ to the subgroup $P^\circ:=P-p$, where $p\in P$ is any element. Denote by $\mathcal L_n^\circ$ the image of this map. The \emph{commensurability relation} on $\mathcal{L}_n$, denoted $\sim_n$, is defined by $P\sim_nQ$ if $[P^\circ:P^\circ\cap Q^\circ]+[Q^\circ:P^\circ\cap Q^\circ]<\infty$. This is, and it can be easily checked to be, an equivalence relation. The $\sim_n$-equivalence class of $P\in \mathcal L_n$ will be denoted by the corresponding bold letter $\mathbf{P}$. Let $\mathcal Y_n:=\mathcal L_n/\sim_n$ and $\overline{\mathcal Y}:=\bigcup_{n=1}^\infty\mathcal Y_n$. Given $\mathfrak A,\mathfrak B\in\overline{\mathcal X}^*$, say that $\mathfrak A\approx\mathfrak B$ if there is $\mathbf P\in\overline{\mathcal Y}$ such that $\mathbf{P}\cap\mathfrak A\neq\emptyset$ and $\mathbf{P}\cap\mathfrak B\neq\emptyset$. The relation $\approx$ is reflexive and symmetric. We use $\approx$ again to denote its transitive closure. The $\approx$-equivalence class of $\mathfrak A$ will be denoted by $\widetilde{\mathfrak A}$.
 
\begin{proposition}\label{PIDcolour}
If $M_R$ is a non-zero free $R$-module over a unital PID $R$ then $\overline{\mathcal{X}}^*(M_R)\cong \mathbb{N}$.
\end{proposition}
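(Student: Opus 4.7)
The plan is to build a monoid isomorphism $\overline{\mathcal{X}}^*(M_R) \cong \mathbb{N}$ sending $[[M^k]]$ to $k$, with $M^0 := \{0\}$ corresponding to the monoidal identity. Writing $M \cong R^{(\lambda)}$ for some cardinal $\lambda \geq 1$, the guiding observation is that the linear equations appearing in any pp-formula act coordinate-wise on tuples in $M$, so each parameter-free pp-formula $\phi(\bar x)$ defines a subgroup $\phi(M) \leq M^n$ that decomposes as $\phi(R)^{(\lambda)}$, where $\phi(R) \leq R^n$ is the same formula interpreted in $R_R$. Because a coset $c + A$ is pp-isomorphic to $A$ via the translation $a \mapsto a + c$ (whose graph is pp-defined by $y = x + c$), I would restrict attention throughout to pp-subgroups.

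For the surjectivity direction---that every non-empty pp-definable set is pp-isomorphic to some $M^k$---I would exploit that $R$ is Noetherian to conclude every submodule of $R^n$ is finitely generated, hence of the form $\operatorname{Im}(C)$ for a matrix $C$, and therefore pp-definable via $\exists \bar y\, \bar x = C\bar y$. The PID hypothesis further forces any pp-subgroup $\phi(R) \leq R^n$ to be free of some rank $k \leq n$. Fixing an $R$-linear bijection $f\colon \phi(R) \to R^k$, its graph is an $R$-submodule of $R^{n+k}$, hence pp-definable; thus $f$ is a pp-isomorphism. The same defining pp-formula then yields a pp-isomorphism $\phi(M) \cong M^k$ upon extending coordinate-wise over $\lambda$.

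For injectivity, given a pp-isomorphism $M^k \to M^l$ with graph $\chi(M) = \chi(R)^{(\lambda)}$, projecting onto individual coordinates of $\lambda$ forces the pp-subgroup $\chi(R) \leq R^{k+l}$ to itself be the graph of a bijection $R^k \to R^l$; this bijection is $R$-linear because $\chi(R)$ is an $R$-submodule, and the invariant basis number property of the commutative ring $R$ then yields $k = l$. Combined with the Cartesian product formula $[[M^k]] \cdot [[M^l]] = [[M^{k+l}]]$, this gives the desired monoid isomorphism. The main delicate point I anticipate is the rigorous verification of the coordinate-wise decomposition $\phi(M) \cong \phi(R)^{(\lambda)}$: one must carefully track how the finite-support condition on elements of $M = R^{(\lambda)}$ interacts with the existential quantifiers in pp-formulas (so that witnesses can be assembled into a single element of $M^m$), and verify that the descent from $\chi(M)$ to $\chi(R)$ genuinely produces a bijection $R^k \to R^l$, not merely a relation.
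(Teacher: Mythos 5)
Your argument is correct and follows essentially the same route as the paper's proof: reduce to $R_R$, use Noetherianity to identify $pp$-definable subgroups of $R^n$ with finitely generated submodules (hence, by the PID hypothesis, free of rank $k\leq n$), observe that the graph of an $R$-linear isomorphism onto $R^k$ is itself a submodule and therefore $pp$-definable, and transfer back to $M$. The only differences are that you reprove by hand the coordinate-wise compatibility of $pp$-formulas with direct sums (the paper simply cites $\mathcal L_n^\circ(R_R)\cong\mathcal L_n^\circ(M_R)$ from Prest, Lemma~1.2.3), and that you explicitly verify the injectivity half --- that $[[M^k]]\neq[[M^l]]$ for $k\neq l$, via descent and the invariant basis number property --- which the paper's proof leaves implicit; both are sound and the latter is a worthwhile addition.
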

\begin{proof}
It suffices to check that if $\emptyset\neq D \in\mathcal L_n(M_R)$ then $[[D]] = [[M^k]]$ for some $k \leq n$. Recall from \cite[Exercise~2, p.19]{PreBk} that if $R$ is Noetherian then the set of $pp$-definable subgroups of $R_R$ is precisely the set of right ideals of $R$. Moreover, since $R$ is a PID, a $pp$-definable subgroup $D'$ of $R_R^n$ is a finitely generated submodule of the free module $R_R^n$, and hence is itself free. 

It follows from \cite[Lemma~1.2.3]{PSL} that $\mathcal L_n^\circ(R_R)\cong\mathcal L_n^\circ(M_R)$ for any non-zero $R$-module. Since every $pp$-definable subset of $M_R^n$ is $pp$-definably isomorphic to an element of $\mathcal L_n^\circ(M_R)$, the discussion in the above paragraph gives us $D \cong M^k$ for some $k\leq n$, as required.
\end{proof}
\begin{remark}
    It follows from the above that each $pp$-definable subset of $M^n$ is ($pp$-definably) isomorphic to $\prod_{i=1}^n a_i R$--a product of right ideals.
\end{remark}

Recall from the construction of $K_0(M_R)$ in \cite[Section~5.2]{Kuber1} that there is a definable-isomorphism-invariant integer-valued function $\Lambda_{\widetilde{\mathfrak{A}}}$ on the objects of $\C$ for each $\widetilde{\mathfrak{A}} \in (\overline{\mathcal{X}}^*/\approx)$  such that for each $D \in \C, \Lambda_{\Tilde{\mathfrak{A}}}(D) \neq 0$ only for finitely many values of $\widetilde{\mathfrak{A}}$. When $R$ is a PID, then it follows from Proposition \ref{PIDcolour} that the set $(\overline{\mathcal{X}}^*/\approx)$ is in bijection with $\mathbb N$. This allows us to define the dimension of a definable set.
\begin{definition}\label{dim}
    Let $M_R$ be a non-zero free $R$-module over an unital PID $R$. Define $$\dim(D) := \max\{\widetilde{\mathfrak{A}}\in(\overline{\mathcal{X}}^*/\approx) \mid\Lambda_{\widetilde{\mathfrak{A}}}(D) \neq 0\}\text{ if }\emptyset\neq D\in\C(M_R);\ \dim(\emptyset):=-\infty. $$
\end{definition}
Following the proofs of \cite[Theorems~5.2.8,5.2.9]{Kuber1} it is easy to check that $D\mapsto\dim(D)$ is monotone with respect to definable injections, and that $\dim(D_1\sqcup D_2)=\max\{\dim(D_1),\dim(D_2)\}$. The dimension of definable sets can be used to associate dimension to an automorphism of a definable set in $\C$ as in Definition \ref{supp}. In order to complete Step I, in the case when $R$ is a PID, we give a proof of the first line of the proof of Proposition \ref{autinftydim} below in Lemma \ref{defsetshift}.

Recall that a (finite) antichain in a poset is a (finite) subset in which any two distinct elements are incomparable. The set $\mathcal L_n$ is a poset with respect to inclusion. Say that a definable set $D$ is a \emph{block} if it can be written in the form $P\setminus\bigcup\beta$ for some $P\in\mathcal L_n$ and finite antichain $\beta$ in $\mathcal L_n$ satisfying $\bigcup\beta\subsetneq P$.

\begin{lemma}\label{defsetshift}
Let $D_1,D_2 \in \C(M_R)$. If $\dim(D_1)$ and $\dim(D_2)$ are both greater than $m$ then there always exists a $D \in \C(M_R)$ with a definable bijection $g : D\rightarrow D_2$ such that $\dim(D_1\cap D)>m$.
\end{lemma}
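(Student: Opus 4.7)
My plan is to build $D$ by replacing a sub-coset of a top-dimensional block of $D_2$ with a sub-coset of a top-dimensional block of $D_1$ (via a definable bijection), and placing the rest of $D_2$ in a disjoint region of the ambient space. To set the stage, I would embed $D_1\subseteq M^{n_1}$ and $D_2\subseteq M^{n_2}$ into a common power $M^N$ (with $N=n_1+n_2+1$, leaving one slot free for the pushing step) via the canonical zero-padding definable isomorphisms. By Baur--Monk together with Proposition \ref{PIDcolour}, each $D_i$ is a finite disjoint union of blocks $P\setminus\bigcup\beta$, where $P$ is a coset of some $P^\circ\cong M^k$ and $\beta$ is a finite antichain of proper sub-cosets. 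The hypothesis $\dim D_i>m$ and the additivity of $\dim$ under disjoint unions force the existence of a top-dimensional block $B_i=P_i\setminus\bigcup\beta_i\subseteq D_i$ with $\dim P_i=k_i>m$.

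Set $\ell:=\min(k_1,k_2)>m$. The crux is to produce a coset $Q_1=q_1+H\subseteq P_1$ with $H\leq P_1^\circ$ a free submodule of rank $\ell$ (available since $P_1^\circ\cong M^{k_1}$) such that $\dim(Q_1\cap B_1)=\ell$. I would handle $\beta_1$ case by case. If $H+C^\circ=P_1^\circ$ for some $C\in\beta_1$, the rank formula for sums and intersections of free submodules of a free module over a PID yields $\dim(Q_1\cap C)=\ell+\dim C^\circ-k_1<\ell$. If instead $H+C^\circ\subsetneq P_1^\circ$, the ``bad'' base points $q_1$ for which $Q_1\cap C\neq\emptyset$ form a single proper coset of $H+C^\circ$ in $P_1$; since $\beta_1$ is finite and $M_R$ is infinite, finitely many proper cosets cannot exhaust $P_1$, and I can pick $q_1$ avoiding all such bad cosets. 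In either case $\dim(Q_1\cap\bigcup\beta_1)<\ell$, hence $\dim(Q_1\cap B_1)=\ell$.

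Symmetrically, pick a sub-coset $Q_2\subseteq P_2$ of dimension $\ell$ together with a definable bijection $\psi:Q_1\to Q_2$ (both are cosets of modules isomorphic to $M^\ell$, so a translation followed by a coordinate identification yields such a $\psi$). Translate $D_2\setminus Q_2$ by any nonzero element along the extra ambient coordinate to obtain a copy $(D_2\setminus Q_2)^*$ disjoint from $D_1\cup Q_1$. Set $D:=Q_1\sqcup(D_2\setminus Q_2)^*$ and define $g:D\to D_2$ to agree with $\psi$ on $Q_1$ and with the inverse translation on $(D_2\setminus Q_2)^*$; then $g$ is a definable bijection and $D\cap D_1\supseteq Q_1\cap B_1$ has dimension $\ell>m$. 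The main obstacle is the middle paragraph: producing $Q_1$ inside $B_1$ whose intersection with $B_1$ retains dimension $\ell$. Proposition \ref{PIDcolour} is essential here, since it classifies $pp$-definable subgroups of $P_1^\circ$ by rank and thereby makes both the rank formula and the generic-placement argument available---mirroring the implicit treatment of the vector-space analogue in Proposition \ref{autinftydim}.
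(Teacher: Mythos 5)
Your overall architecture (decompose into blocks, align top-dimensional blocks, push the remaining blocks of $D_2$ into a disjoint region of a larger ambient power) is the same as the paper's; the difference is in the alignment step, where the paper simply zero-pads $M^{m_1}$ into $M^{m_2}$ so that the two punctured $pp$-sets already overlap in dimension $m_1=\min(m_1,m_2)$, while you instead try to carve a common sub-coset $Q_1$ of dimension $\ell$ out of $B_1$. That step --- which you yourself identify as the crux --- has a genuine gap, and its source is that the antichain $\beta_1$ in a block $P_1\setminus\bigcup\beta_1$ need \emph{not} consist of lower-dimensional cosets: the definition only requires $\bigcup\beta_1\subsetneq P_1$, so $\beta_1$ may contain full-dimensional members, e.g.\ finite-index cosets when $T\neq T^{\aleph_0}$ (as for $\Z_\Z$) or full-rank infinite-index subgroups such as $2M\leq M$.

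Concretely, both branches of your dichotomy fail. In the branch $H+C^\circ=P_1^\circ$, the rank formula gives $\mathrm{rk}(H\cap C^\circ)=\ell+\mathrm{rk}(C^\circ)-k_1$, which is $<\ell$ only when $\mathrm{rk}(C^\circ)<k_1$; for a full-rank $C$ (say $C=2\Z\times\Z$ inside $P_1^\circ=\Z^2$ with $H=\Z(1,0)$) you get exactly $\ell$, so $Q_1\cap C$ is a full-dimensional piece of $Q_1$ and the ``remove lower-dimensional holes'' conclusion does not follow. In the branch $H+C^\circ\subsetneq P_1^\circ$, the claim that finitely many proper cosets cannot exhaust $P_1$ because $M_R$ is infinite is false: $\Z=2\Z\cup(1+2\Z)$. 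What is true (B.~H.~Neumann's covering lemma) is that finitely many cosets of \emph{infinite-index} subgroups cannot cover a group, but $H+C^\circ$ may be proper of finite index, and finitely many such bad cosets can cover $P_1$ (take $P_1^\circ=\Z^2$, $H=\Z(0,1)$, $\beta_1=\{\Z(2,0),\,(1,1)+\Z(2,0)\}$: every base point $q_1$ is bad for one of the two). The argument can be repaired --- first refine the block decomposition so that $\dim(\bigcup\beta_1)<\dim(P_1)$, as the paper does when decomposing automorphism graphs before Theorem \ref{mainth}, and then observe that avoidance is only needed for those $C$ with $\mathrm{rk}(H+C^\circ)=\mathrm{rk}(C^\circ)<k_1$, whose bad cosets do have infinite index --- but as written the dichotomy does not establish $\dim(Q_1\cap B_1)=\ell$. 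A smaller point: you must choose $Q_2$ inside $B_2$, not merely inside $P_2$, or else $g$ is not a bijection onto $D_2$.
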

\begin{proof}
Let the dimensions of $D_1$ and  $D_2$ be $m_1$ and $m_2$ respectively. Without loss of generality, assume that $m<m_1\leq m_2$. Recall from \cite[Lemma~2.5.7]{Kuber1} that any definable set $D\subseteq M^n$ can be written as a finite disjoint union of blocks. Let $D_1=\bigsqcup_{i=1}^k B_i$ and $D_2=\bigsqcup_{j=1}^{k'} B'_j$, where $B_i$ and $B'_j$ are blocks. Without loss of generality, assume that $\dim(B_1) = m_1$ and $\dim(B'_1)=m_2$. Since each $pp$-definable set is in bijection with $M^k$ for some $k\in\mathbb N$, we have that $B_1\cong M^{m_1}\setminus(\bigcup\beta_1)$ and $B_2\cong M^{m_2}\setminus(\bigcup\beta_2)$ for appropriate antichains $\beta_1,\beta_2$.

We may assume that $\overline a\mapsto(\overline a,\overline 0)$ is an inclusion of $M^{m_1}$ into $M^{m_2}$, where $|\overline 0|=m_2-m_1$, so that $\dim(\left(M^{m_1}\setminus(\bigcup\beta_1)\right)\cap\left(M^{m_2}\setminus(\bigcup\beta_2)\right))=m_1>m$. Now we may adjust pairwise disjoint isomorphic copies $B''_j$ of $B'_j$ for $1<j\leq k'$ in such a way that $\left(M^{m_2}\setminus(\bigcup\beta_2)\right)\cap B''_j=\emptyset$. Choosing $D:=\left(M^{m_2}\setminus(\bigcup\beta_2)\right)\sqcup\bigsqcup_{j=2}^{k'}B''_j$ along with appropriate isomorphism proves the result.
\end{proof}

\noindent{\textbf{Step II:}} After modifying the definition of dimension for definable sets in the context of a PID, we need to modify the definition of the groups $\Upsilon^n:=\Upsilon^n(M_R)$ to accommodate finite-index subgroups in case $T\neq T^{\aleph_0}$, and give a suitably modified proof of $\Omega^n_n=\Upsilon^n\ltimes\Omega^n_{n-1}$. Apart from this change, the proof of Step II remains unchanged to finally yield Theorem \ref{mainth}.

For $G\in\mathcal L_n$, denote by $\mathrm{Aut}_\mathcal L(G)$ the set of $pp$-definable automorphisms of $G$. Further if $G\in\mathcal L_n^\circ$ then let $\mathrm{Aut}_{{\mathcal L}^\circ}(G)$ denote the subgroup of $\mathrm{Aut}_{{\mathcal L}}(G)$ consisting of all those automorphisms $g$ which satisfy $g(\overline 0)=\overline 0$.
\begin{proposition}\label{Aut}
If $G\in\mathcal L_n^\circ$ and $[M^n:G]$ is finite, then $\mathrm{Aut}_{\mathcal{L}}(G)\cong\mathrm{Aut}_{\mathcal{L}}(M^n) \cong (GL_n(R) \ltimes M^n)$.
\end{proposition}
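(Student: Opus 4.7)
The plan is to prove the proposition in two main steps: first establish the isomorphism $\mathrm{Aut}_{\mathcal L}(M^n) \cong GL_n(R) \ltimes M^n$, and then leverage a $pp$-definable bijection $\psi: G \to M^n$ to transfer this description to $\mathrm{Aut}_{\mathcal L}(G)$.

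For the first isomorphism, every $pp$-definable function $f: M^n \to M^n$ has a $pp$-definable graph, which must be a coset of a $pp$-definable subgroup $H$ of $M^{2n}$. The requirements that $f$ be totally defined and single-valued translate to the assertion that $H$ is the graph of an $R$-module homomorphism. Since $R$ is a PID (hence Noetherian), the $pp$-definable subgroups of $R^{2n}$ are precisely the finitely generated submodules; using \cite[Lemma~1.2.3]{PSL} to relate these to $pp$-definable subgroups of $M^{2n}$, one sees that the graph encodes an $R$-linear map determined by a matrix $A \in M_n(R)$. Hence $f$ has the form $\overline x \mapsto A\overline x + \overline b$ for some $\overline b \in M^n$; conversely, every such affine map is clearly $pp$-definable. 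Bijectivity of $f$ is then equivalent to $A \in GL_n(R)$, and a direct computation of compositions $(A, \overline b)(A', \overline b') = (AA', A\overline b' + \overline b)$ reveals the claimed semi-direct product structure, with $GL_n(R)$ acting on $M^n$ by matrix multiplication.

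For the second isomorphism, the hypothesis $[M^n:G] < \infty$ implies that $M^n$ decomposes as a finite disjoint union of cosets of $G$, and so by the monotonicity and additivity of $\dim$ recorded after Definition \ref{dim} we obtain $\dim(G) = \dim(M^n) = n$. Proposition \ref{PIDcolour} then supplies a $pp$-definable bijection $\psi: G \to M^n$. Since the composition of $pp$-definable maps remains $pp$-definable, the assignment $\alpha \mapsto \psi \alpha \psi^{-1}$ is a well-defined group isomorphism $\mathrm{Aut}_{\mathcal L}(G) \to \mathrm{Aut}_{\mathcal L}(M^n)$, completing the chain.

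The main obstacle lies in the first step, namely justifying that every $pp$-definable bijection on $M^n$ is affine with linear part in $GL_n(R)$. In the vector-space analogue of \S\ref{K1V}, this was essentially automatic from quantifier elimination over a field; over a PID one must carefully invoke the Noetherian structure of $pp$-definable subgroups of $M^{2n}$ and argue that any finitely generated submodule of $R^{2n}$ which also serves as a functional graph must arise as the graph of an $R$-linear endomorphism of $R^n$ and hence is realized by matrix multiplication by some element of $M_n(R)$.
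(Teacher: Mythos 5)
Your proposal is correct and follows essentially the same route as the paper: both reduce to showing that a $pp$-definable automorphism of $M^n$ splits as a translation plus a $pp$-definable automorphism fixing $\overline 0$, identify the latter with $GL_n(R)$ by viewing its graph as a ($pp$-definable, hence finitely generated free) submodule of $R^{2n}$ via \cite[Lemma~1.2.3]{PSL}, and transfer to $G$ through a $pp$-definable bijection $G\cong M^n$ supplied by the finite-index hypothesis and Proposition \ref{PIDcolour}. The only cosmetic difference is that the paper extracts the matrix explicitly from a basis of the rank-$n$ graph module, whereas you argue more abstractly that a functional submodule graph with full domain is the graph of a linear map.
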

\begin{proof}
Since any finite-index subgroup $G$ of $M^n$ is $pp$-definably isomorphic to $M^n$ we get the first isomorphism. Let $\pi_1$ and $\pi_2$ be the projection maps of $M^{2n}$ of the first and last $n$ coordinates respectively.

Recall that $\mathcal L_n^\circ(R_R)\cong\mathcal L_n^\circ(M_R)$ \cite[Lemma~1.2.3]{PSL} for any non-zero free $R$-module $M_R$. Thus, we get the isomorphism $\mathrm{Aut}_{{\mathcal L}^\circ}(M^n) \cong \mathrm{Aut}_{{\mathcal L}^\circ}(R^n)$. Now $\mathrm{Aut}_{\mathcal{L}^\circ}(M^n)$ acts on the group $M^n$ naturally, and hence $\mathrm{Aut}_{\mathcal{L}}(M^n) \cong \mathrm{Aut}_{{\mathcal L}^\circ}(M^n) \ltimes M^n$ via the map $g \mapsto (g',(g')^{-1}(g(\overline 0))$, where $g'(\overline x):=g(x)-g(\overline 0)$. Combining the previous two statements, we get that $\mathrm{Aut}_{\mathcal{L}}(M^n)\cong \mathrm{Aut}_{{\mathcal L}^\circ}(R^n) \ltimes M^n$. Thus it remains to show that $\mathrm{Aut}_{{\mathcal L}^\circ}(R^n) \cong GL_n(R)$. Since multiplication by an invertible matrix is a $pp$-definable map that preserves $\overline 0$, we only need to show that each element in $\mathrm{Aut}_{{\mathcal L}^\circ}(R^n)$ arises in this way.

Let $f\in\mathrm{Aut}_{{\mathcal L}^\circ}(R^n)$ and $\phi(\overline{x},\overline{y})$ be the $pp$-formula defining the subgroup $D\in \mathcal L_{2n}(R_R)$ that is the graph of $f$. Since $R$ is a PID, thanks to \cite[Theorem~2.3.19]{PSL}, $D$ is a finitely generated submodule of $R^{2n},$ and hence is free. Since $D$ is the graph of an automorphism, its rank equals the rank of its domain, which equals $n$. Let $\{\overline{z}_1,\overline z_2,\hdots,\overline z_n\} \subseteq D$ be a basis of $D$. Using the entries of the basis elements, we can construct an $(n\times 2n)$-matrix $[A\ B]$ over $R$ such that for any $\overline a\in \pi_1(D)= R^n$ there is a unique $\overline w \in R^n$ and $\overline w[A\ B]=(\overline a,f(\overline a))$, where $A$ and $B$ are respectively the left and right $n\times n$ blocks of the $n\times 2n$ matrix. Since $\pi_1(D)=\pi_2(D)=R^n$, we conclude that $A,B\in GL_n(R)$. Then by a suitable change of basis, we may assume that $\overline w=\overline a$ and $A=I$ is the identity matrix, thereby transforming the block matrix to $[I\ B']$ for $B'\in GL_n(R)$. Thus the required matrix presenting $f$ is $-B'$. This completes the proof. 
\end{proof}

Recall that $\Upsilon^n(V_F)=\mathrm{Aut}_{\mathcal L}(V^n_F)$. However, when $R$ is a PID, the possible existence of finite-index subgroups of $M_R$ requires us to reinterpret the group $\Upsilon^n(M_R)$.

\begin{definition}\label{Upsilon}
Say that $f \in \A(M^n)$ is an $n$-\emph{automorphism} if the formula $\phi(\overline{x},\overline{y})$ defining the graph of $f$ is logically equivalent to $\bigvee_{i=1}^m \phi_i(\overline{x},\overline{y})$, where each $\phi_i$ is $pp$-definable and $\dim(\phi_i(M_R)) = n.$ Let $\Upsilon^n:=\Upsilon^n(M_R)$ be the set of all $n$-automorphisms $f\in\A(M^n)$.
\end{definition}
Since a projection of $pp$-definable sets is again so, given an $n$-automorphism $f$ of $M^n$, the set $H(f):=\bigcap_{i=1}^m(\pi_1(\phi_i(M_R^n)))^\circ $ is a $pp$-definable subgroup of $M^n$ of finite-index, say $k$, where $\pi_1$ is the projection of $M^{2n}$ onto the first $n$ coordinates. Thus, $\phi(\overline{x},\overline{y})$ is logically equivalent to $\bigvee_{H(f)+\overline p_j\in M^n/H(f)}\phi_j(\overline{x},\overline{y})$, where $\pi_1\phi_j(M^n) = H(f)+\overline{p}_j$. So in other words, $f\in\A(M^n)$ is an $n$-automorphism if and only if there is a finite-index subgroup $H(f) \in \mathcal L_n$ and a permutation $\sigma_f$ of $M^n/H(f)$ such that for every $H(f)+q\in M^n/H(f)$, the restriction of $f\mid_{H(f)+q}:H(f)+q \rightarrow \sigma_f(H(f)+q)$ is a $pp$-definable bijection.  Since the intersection of two finite-index subgroups of an abelian group is again so, we get the following.
\begin{proposition}\label{sbgpupsi}
The set $\Upsilon^n$ is a subgroup of $\A(M^n)$.
\end{proposition}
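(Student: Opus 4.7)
The plan is to verify the three subgroup axioms using the alternative characterization proved in the paragraph preceding the statement: $f \in \A(M^n)$ belongs to $\Upsilon^n$ iff there exists a finite-index subgroup $H(f) \in \mathcal L_n$ and a permutation $\sigma_f$ of $M^n/H(f)$ such that each restriction $f|_{H(f)+\overline q}$ is a pp-definable bijection onto $\sigma_f(H(f)+\overline q)$. The identity $\id_{M^n}$ is an $n$-automorphism via the trivial data $(M^n, \id)$, since $\overline x=\overline y$ is pp of dimension $n$. For inverses, if $f$ has data $(H(f), \sigma_f)$ then $f^{-1}$ is governed by $(H(f), \sigma_f^{-1})$: the graph of $f^{-1}|_{\sigma_f(H(f)+\overline q)}$ is obtained from that of $f|_{H(f)+\overline q}$ by swapping coordinates, which preserves both pp-definability and dimension.

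The substantive step is closure under composition. Given $f_1, f_2 \in \Upsilon^n$ with graph decompositions $\bigvee_{i} \phi_i(\overline x,\overline y)$ and $\bigvee_{j} \psi_j(\overline y,\overline z)$ witnessing Definition \ref{Upsilon}, the graph of $f_2 \circ f_1$ is the finite disjunction of the formulas $\chi_{ij}(\overline x, \overline z) := \exists \overline y\bigl(\phi_i(\overline x,\overline y)\wedge\psi_j(\overline y,\overline z)\bigr)$. Each $\chi_{ij}$ is pp, since pp-formulas are closed under conjunction and existential quantification. For a non-empty $\chi_{ij}$, its first projection equals $(H(f_1)+\overline p_i)\cap f_1^{-1}(H(f_2)+\overline q_j)$; because $f_1$ restricts to a pp-bijection from $H(f_1)+\overline p_i$ onto some $H(f_1)$-coset, this preimage is in pp-definable bijection with the intersection of two cosets of the finite-index subgroup $H(f_1)\cap H(f_2)$. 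Since every coset of a finite-index subgroup of $M^n$ has dimension $n$ and the graph of a function is in definable bijection with its first projection, we obtain $\dim(\chi_{ij}(M_R))=n$, exhibiting $f_2\circ f_1$ as an $n$-automorphism.

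The main technical obstacle is the dimension computation for each $\chi_{ij}$; it hinges on the two facts flagged in the excerpt: the intersection of finitely many finite-index pp-subgroups of $M^n$ is again a finite-index pp-subgroup (so the relevant coset intersections are either empty or dimension-$n$ cosets), and pp-definable bijections preserve dimension (so pulling a pp-coset back through $f_1$ yields a pp-coset of the same dimension). With these in hand, the decomposition above places $f_2\circ f_1$ in $\Upsilon^n$ in the sense of Definition \ref{Upsilon}, completing the verification.
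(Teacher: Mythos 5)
Your proposal is correct and follows essentially the same route as the paper, which justifies the proposition in one sentence by the same key observation you isolate: the intersection of two finite-index subgroups of an abelian group is again finite-index, so the composite of two $n$-automorphisms is governed by cosets of $H(f_1)\cap H(f_2)$ (each of dimension $n$), while pp-formulas are closed under conjunction, existential quantification and variable swaps, handling composition, inverses and the identity. Your write-up simply supplies the details the paper leaves implicit.
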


Propositions \ref{sbgpupsi} and \ref{Aut} together with the discussion above yields the following description of $\Upsilon^n$ as a colimit of a system of groups.
\begin{proposition}\label{Upsi}
    Let $\mathbf G_n$ be the set of all $pp$-definable finite-index subgroups of $M^n_R$. Then $(\mathbf G_n,\leq)$ is a poset under the subgroup relation in such a way that $(\mathbf G_n,\leq)^{op}$ is a directed set. Then 
    \begin{displaymath}
    \Upsilon^n(M_R) \cong \varinjlim_{G \in (\mathbf G_n,\leq)^{op}} (\mathrm{Aut}_{\mathcal{L}}(G)\wr \FS(M^n/G)) \cong\varinjlim_{G \in (\mathbf G_n,\leq)^{op}} \left((GL_n(R)\ltimes M^n)\wr \FS(M^n/G)\right).  
    \end{displaymath}
\end{proposition}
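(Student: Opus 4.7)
The plan is to establish the two asserted isomorphisms by: (i) verifying that $(\mathbf G_n,\leq)^{op}$ is directed, (ii) constructing, for each $G\in\mathbf G_n$, a group monomorphism from $\mathrm{Aut}_{\mathcal L}(G)\wr \FS(M^n/G)$ into $\Upsilon^n(M_R)$ whose union of images exhausts $\Upsilon^n(M_R)$, (iii) checking the overlap/compatibility condition that makes this a directed colimit, and (iv) invoking Proposition~\ref{Aut} for the final wreath-product description.

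For directedness, given $G_1,G_2\in\mathbf G_n$, I would observe that $G_1\cap G_2$ is $pp$-definable (conjunction of $pp$-formulas is $pp$) and has finite index in $M^n$ (an intersection of two finite-index subgroups of an abelian group is again finite-index). Hence $G_1\cap G_2\in\mathbf G_n$ is a common lower bound in $(\mathbf G_n,\leq)$, equivalently a common upper bound in $(\mathbf G_n,\leq)^{op}$.

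For the embeddings, I would fix, for each $G\in\mathbf G_n$, a system of coset representatives $\{q:q\in M^n/G\}$ and define
\[\iota_G:\mathrm{Aut}_{\mathcal L}(G)\wr \FS(M^n/G)\longrightarrow\Upsilon^n(M_R)\]
by sending $(\sigma,(a_q)_{q})$ to the bijection $f$ of $M^n$ that maps $q+y\mapsto\sigma(q)+a_q(y)$ for $q\in M^n/G$ and $y\in G$. Since $M^n/G$ is finite, the graph of $f$ is a finite union of translates of graphs of elements of $\mathrm{Aut}_{\mathcal L}(G)$; each such translate is $pp$-definable of dimension $n$, so $f\in\Upsilon^n$ per Definition~\ref{Upsilon}. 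The wreath product multiplication matches function composition under this assignment, so $\iota_G$ is a group homomorphism, and injectivity follows since the data $(\sigma,(a_q)_q)$ can be read off from $f$'s action on cosets of $G$. That every $f\in\Upsilon^n$ lies in some $\mathrm{image}(\iota_G)$ is precisely the characterization established in the paragraph preceding Proposition~\ref{sbgpupsi}: take $G=H(f)$ and read off $(\sigma_f,(g_{q,f})_q)$.

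To promote this directed union to a directed colimit, I would verify compatibility at meets: for $G_1,G_2\in\mathbf G_n$ with meet $G_3=G_1\cap G_2$, any $f\in\mathrm{image}(\iota_{G_1})\cap\mathrm{image}(\iota_{G_2})$ preserves both coset structures and therefore preserves the $G_3$-coset structure via the injection $M^n/G_3\hookrightarrow M^n/G_1\times M^n/G_2$, with each restricted piece remaining $pp$-definable; thus $f\in\mathrm{image}(\iota_{G_3})$. The second isomorphism is then immediate from Proposition~\ref{Aut}. The main obstacle is formulating the colimit carefully: for $G'\leq G$ in $\mathbf G_n$ one does not obtain $\mathrm{image}(\iota_G)\subseteq\mathrm{image}(\iota_{G'})$ in general (for instance, the coordinate-swap in $\mathrm{Aut}_{\mathcal L}(\mathbb Z^2)$ preserves $\mathbb Z^2$-cosets trivially but not those of $2\mathbb Z\times\mathbb Z$), so the colimit must be understood through the meet-compatibility above, i.e., as an increasing directed union of subgroups of $\Upsilon^n(M_R)$ whose pairwise overlaps always descend into the image over the intersection $G_1\cap G_2$, rather than via naive transition maps between successive $U_G$.
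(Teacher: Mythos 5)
Your steps (i) and (ii), together with the observation that $\bigcup_G\mathrm{image}(\iota_G)=\Upsilon^n$ via the subgroup $H(f)$, faithfully reproduce (and flesh out) everything the paper actually supplies: the paper derives the proposition in one line from Propositions \ref{sbgpupsi} and \ref{Aut} and the preceding discussion, and exhibits transition maps only in the example $\Upsilon^1(\mathbb Z_{\mathbb Z})$, where they exist because $\mathrm{Aut}_{{\mathcal L}^\circ}(G)\cong GL_1(R)=R^\times$ acts by scalars and hence preserves every smaller subgroup. Up to that point you are aligned with the paper. The problem is your step (iii), where you correctly detect an obstruction but do not close it. A colimit over the directed poset $(\mathbf G_n,\leq)^{op}$ requires a transition homomorphism for \emph{every} comparable pair, and your substitute --- meet-compatibility of the overlaps $U_{G_1}\cap U_{G_2}\subseteq U_{G_1\cap G_2}$, followed by the phrase ``increasing directed union'' --- is not coherent: the union being increasing along $(\mathbf G_n,\leq)^{op}$ is precisely the inclusion $U_G\subseteq U_{G'}$ that your own coordinate-swap example refutes, and controlling pairwise overlaps of the images says nothing about the existence of a cocone or of the colimit itself.

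The situation is in fact worse than your write-up suggests: for $n\geq 2$ and $T\neq T^{\aleph_0}$ the family $U_G:=\mathrm{image}(\iota_G)$ is not even directed under inclusion, so no reformulation as a directed union of these subgroups can work. Take $R=\mathbb Z$, $n=2$. If $U_{\mathbb Z^2}\subseteq U_K$ then every $A\in GL_2(\mathbb Z)$ fixes $0$ and permutes $K$-cosets, forcing $KA=K$ for all $A$ and hence $K=a\mathbb Z^2$. On the other hand $U_{2\mathbb Z\times\mathbb Z}$ contains an extension of the $pp$-definable automorphism $g(2x,y)=(2y,x)$ of $2\mathbb Z\times\mathbb Z$ (graph $\{(u,v,w,z):u=2z,\ w=2v\}$), and $g$ preserves no $a\mathbb Z^2$: for $a$ even it sends $a\mathbb Z\times a\mathbb Z$ onto $2a\mathbb Z\times\tfrac{a}{2}\mathbb Z$, and for odd $a>1$ the piece of $f$ on the coset $(1,0)+2\mathbb Z\times\mathbb Z$ sends $(ac,ad)$ to $(1+2ad,\tfrac{ac-1}{2})\notin a\mathbb Z^2$. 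Hence no $G_3$ satisfies $U_{\mathbb Z^2}\cup U_{2\mathbb Z\times\mathbb Z}\subseteq U_{G_3}$. So the proposition as stated is not established by your argument (nor by the paper's) for $n\geq2$ when finite-index subgroups exist; a genuine proof would have to restrict the index poset to a cofinal subfamily of subgroups invariant under all the relevant $pp$-automorphisms, or reformulate the colimit altogether. For $n=1$, and in the case $T=T^{\aleph_0}$ where $\mathbf G_n=\{M^n\}$, your argument is complete and agrees with the paper.
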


Let us explain the directed colimit in the above result through the example of $\Upsilon^1(\mathbb Z_\mathbb Z)$: $$\Upsilon^1(\mathbb Z_{\mathbb Z}) \cong \varinjlim_{G \in (\mathbf G_1(\mathbb Z_\mathbb Z),\leq)^{op}}  \left(\mathrm{Aut}_{\mathcal{L}}(G)\wr \FS(\mathbb Z/G)\right) \cong \varinjlim_{n \in (\mathbb N^*, \mid)}  \left(\mathrm{Aut}_{\mathcal{L}}(n\mathbb Z)\wr \FS(\mathbb Z_n)\right),$$ where $\mathbb N^*:= \mathbb N \setminus \{0\}$ and $\mid$ is the divisibility relation.

Suppose $n\mid m$ in $\mathbb{N}^*$. Then there is a natural quotient map ${\theta}:\mathbb Z_m \rightarrow \mathbb Z_n$. We explicitly describe the embedding of $\left(\mathrm{Aut}_{\mathcal{L}}(n\mathbb Z)\wr \FS(\mathbb Z_n)\right) \cong \FS(\mathbb Z_n) \ltimes  \bigoplus_{l\in \mathbb{Z}_n}(\mathbb{Z}_2 \ltimes  n\mathbb{Z}) $ into $\left(\mathrm{Aut}_{\mathcal{L}}(m\mathbb Z)\wr \FS(\mathbb Z_m)\right) \cong \FS(\mathbb Z_m) \ltimes  \bigoplus_{l'\in \mathbb{Z}_m}(\mathbb{Z}_2 \ltimes  m\mathbb{Z})$. Recall from Proposition \ref{Aut} that $\mathrm{Aut}_\mathcal L(n\mathbb Z)\cong GL_1(\mathbb{Z})\ltimes n\mathbb Z$. Clearly $GL_1(\mathbb Z)\cong\mathbb Z_2$.

A natural way to think about the required embedding is component-wise. Define the embedding $\alpha_1: \FS(\mathbb Z_n) \hookrightarrow \FS(\mathbb{Z}_m)$ by $\alpha_1(\sigma)(t) := (t+\sigma(\theta(t))-\theta(t))\;(\text{mod}\;m)$ for $\sigma\in\FS(\mathbb Z_n)$. Also define the embedding $\alpha_2:  \bigoplus_{l\in \mathbb{Z}_n}(\mathbb{Z}_2 \ltimes  n\mathbb{Z}) \hookrightarrow  \bigoplus_{l'\in \mathbb{Z}_m}(\mathbb{Z}_2 \ltimes  m\mathbb{Z})$ by $$\alpha_2((\zeta_l,h_l)_{l \in \mathbb{Z}_n}) := ((\zeta_{\theta(l')},h_{\theta(l')})_{l'\in \mathbb{Z}_m}).$$ It is routine to verify that the map $(\alpha_1,\alpha_2)$ is indeed the required embedding. The description of the maps $\alpha_1,\alpha_2$ looks complicated but it is not as the next examples demonstrate. 

\begin{example} Let $n=2,m=4$ and $\sigma\in\FS(\mathbb Z_2)$ is the only non-trivial permutation. Then $\alpha_1(\sigma)$ is given by assignment $0\mapsto 1,1\mapsto 0,2\mapsto3$ and $3\mapsto2.$  
\end{example}

\begin{example}
Let $n=3$ and $m=6$. Then $((\zeta,h_0),( \zeta', h_1),( \zeta'',h_2)) \in \bigoplus_{l\in \mathbb{Z}_3} (\mathbb{Z}_2 \ltimes 3\mathbb{Z})$ maps to $\alpha_2$ is $((\zeta,h_0),( \zeta', h_1),( \zeta'',h_2),(\zeta,h_0),( \zeta', h_1),( \zeta'',h_2)) \in \bigoplus_{l'\in \mathbb{Z}_6} (  \mathbb{Z}_2 \ltimes 6\mathbb{Z})$ under $\alpha_2$.
\end{example}

Let $G\leq G'$ in $(\mathbf G_1(\mathbb Z_\mathbb Z),\leq)$. If $[G':G]$ is even then for any $\sigma \in \FS(\mathbb Z/G')$, the permutation $\alpha_1(\sigma)$ in $\FS(\mathbb Z/ G)$ is an even permutation.  The same statement also holds when $\mathbf G_1(\mathbb Z_{\mathbb Z})$ is replaced by $\mathbf G_1(M_R)$ for any $k \geq 1$. After this discussion, the following is not hard to prove.
\begin{proposition}\label{evsbg}
The following are equivalent for any free $R$-module $M_R$.
\begin{enumerate}
    \item There is a cofinal system of even-indexed subgroups in $(\mathbf{G}_1(M_R),\leq)^{op}$.
    \item There is a cofinal system of even-indexed subgroups in $(\mathbf{G}_k(M_R),\leq)^{op}$.
    \item Each permutation is eventually even in the directed colimit in Proposition \ref{Aut}.
\end{enumerate}
\end{proposition}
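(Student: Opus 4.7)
My plan is to establish the equivalences via the chain $(2) \Rightarrow (1) \Rightarrow (2)$ and $(1) \Leftrightarrow (3)$, with the technical heart being a sign computation for the transition maps $\alpha_1$ in the directed colimit of Proposition \ref{Upsi}. The implication $(2) \Rightarrow (1)$ is immediate by taking $k = 1$. For $(1) \Rightarrow (2)$, I would argue as follows: given any $H \in \mathbf{G}_k(M_R)$ with $N := [M^k : H]$, the subgroup $(NM)^k \leq H$ is $pp$-definable of finite index in $M^k$ (its defining formula being the coordinate-wise conjunction of the $pp$-formula for $NM$). Applying (1) to $NM \in \mathbf{G}_1(M_R)$ yields an even-indexed $pp$-definable subgroup $G \leq NM$, and then $G^k \leq (NM)^k \leq H$ is $pp$-definable of index $[M:G]^k$, which is even.

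For $(1) \Leftrightarrow (3)$, everything rests on the following sign formula for the transition map $\alpha_1 : \FS(M^n/G_0) \to \FS(M^n/G)$ associated to $G \leq G_0$ in $\mathbf{G}_n(M_R)$:
\begin{equation*}
\operatorname{sgn}(\alpha_1(\sigma)) \;=\; \operatorname{sgn}(\sigma)^{[G_0 : G]}.
\end{equation*}
Indeed, generalizing the $\mathbb{Z}_{\mathbb{Z}}$ example worked out in the text, a single transposition $(aG_0, bG_0)$ lifts under $\alpha_1$ to a product of exactly $[G_0 : G]$ disjoint transpositions $(a + q + G,\, b + q + G)$ indexed by a set of coset representatives $q$ of $G$ in $G_0$. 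Granting this formula, $(1) \Rightarrow (3)$ is easy: for any $\sigma \in \FS(M^n/G_0)$, use the already-proven (2) at level $n$ to find an even-indexed $G \leq G_0$, and then $\alpha_1(\sigma)$ is even. Conversely, for $(3) \Rightarrow (1)$, pick any $G_0 \in \mathbf{G}_1(M_R)$ with $[M : G_0] \geq 2$ (the case $G_0 = M$ reduces to this after one step, assuming we are in the non-degenerate setting $T \neq T^{\aleph_0}$) and any transposition $\tau \in \FS(M/G_0)$; condition (3) provides $G \leq G_0$ at which $\alpha_1(\tau)$ is even, so the sign formula forces $[G_0 : G]$ to be even, whence $[M : G] = [M : G_0] \cdot [G_0 : G]$ is even, producing the desired cofinal even-indexed subgroup below $G_0$.

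The main obstacle I expect is verifying the sign formula in the general PID setting. This amounts to choosing a set-theoretic section of the quotient $M^n/G \twoheadrightarrow M^n/G_0$ and checking, directly from the construction of $\alpha_1$ in Proposition \ref{Upsi}, that the lift of a transposition decomposes fiberwise into $[G_0 : G]$ pairs of swapped cosets; a minor but necessary sub-step is confirming that the number of transpositions (and hence the sign) is independent of the section chosen, even though the pairing itself is not. Once this calculation is in hand, all remaining steps are formal and require only that finite intersections of finite-index $pp$-definable subgroups are again of that form.
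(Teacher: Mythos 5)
Your skeleton and the key sign computation are essentially what the paper has in mind: the paper only records the observation that $[G':G]$ even forces $\alpha_1(\sigma)$ to be even and declares the rest routine, and your formula $\mathrm{sgn}(\alpha_1(\sigma))=\mathrm{sgn}(\sigma)^{[G_0:G]}$ is correct, since $\alpha_1$ is a homomorphism and a transposition of two $G_0$-cosets visibly lifts to a product of $[G_0:G]$ disjoint transpositions of $G$-cosets (no choice of section is actually needed, as $\alpha_1$ is given by the explicit well-defined formula $t\mapsto t+\sigma(\theta(t))-\theta(t)$). There are, however, two genuine gaps. The first is in $(1)\Rightarrow(2)$: the subgroup $(NM)^k$ with $N=[M^k:H]$ need \emph{not} have finite index in $M^k$ when $R$ has positive characteristic. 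Concretely, for $R=M=\mathbb{F}_2[x]$ and $H=xR$ one has $N=2$ and $NM=2M=0$, which has infinite index; so $(NM)^k\notin\mathbf{G}_k$ and condition $(1)$ cannot be applied below $NM$. The repair is to replace the integer $N$ by a ring element: under $\mathcal L_k^\circ(M_R)\cong\mathcal L_k^\circ(R_R)$ the subgroup $H$ corresponds to a finite-index free rank-$k$ submodule of $R^k$, which by the Smith normal form contains $aR^k$ for $a$ its largest elementary divisor, with $[R:aR]$ finite; one then applies $(1)$ to $aM$ and concludes exactly as you do.

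The second gap is in $(1)\Rightarrow(3)$: you produce $G\leq G_0$ that is \emph{even-indexed}, i.e.\ $[M^n:G]$ is even, but the sign formula makes $\alpha_1(\sigma)$ even only when the \emph{relative} index $[G_0:G]$ is even, and when $[M^n:G_0]$ is itself even the two conditions are not equivalent (e.g.\ $[M^n:G_0]=2$ and $[M^n:G]=6$ give $[G_0:G]=3$). This is rescued by the multiplicative structure of $\mathbf G_n$ for a free module over a PID: if $bM$ is even-indexed and $G_0$ corresponds to the row span of an $n\times n$ matrix $A$ over $R$, then the subgroup corresponding to the row span of $bA$ lies below $G_0$ with relative index $[R:bR]^n$, which is even; but this step must be made explicit, since the claim ``even-indexed $G\leq G_0$ implies $\alpha_1(\sigma)$ even'' is false as stated. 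With these two repairs your argument goes through and coincides with the paper's intended proof.
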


Let us use the notations as well as definitions of groups $\Omega^n_m$ from Step II of the proof of Theorem \ref{K1FINAL}. The group $\Upsilon^n$ clearly acts on $\Omega^n_{n-1}$ by conjugation.
\begin{lemma}
For each $n\geq1$, we have $\Omega^n_n\cong\Upsilon^n\ltimes\Omega^n_{n-1}$.
\end{lemma}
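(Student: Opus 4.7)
The plan is to verify that $\Omega^n_n$ is an inner semi-direct product of $\Upsilon^n$ and $\Omega^n_{n-1}$, which amounts to checking three conditions: (i) $\Omega^n_{n-1}$ is normal in $\Omega^n_n$; (ii) $\Upsilon^n \cap \Omega^n_{n-1} = \{\id_{M^n}\}$; and (iii) every $g \in \Omega^n_n$ factors as $g = uh$ with $u \in \Upsilon^n$ and $h \in \Omega^n_{n-1}$. Normality is immediate: for $g \in \Omega^n_n$ and $f \in \Omega^n_{n-1}$, the identity $\operatorname{Supp}(gfg^{-1}) = g(\operatorname{Supp}(f))$ together with the fact that any definable bijection preserves $\dim$ (as noted after Definition \ref{dim}) gives $gfg^{-1} \in \Omega^n_{n-1}$.

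For trivial intersection, suppose $f \in \Upsilon^n \cap \Omega^n_{n-1}$. Using the coset picture following Definition \ref{Upsilon}, $f$ is determined by a finite-index pp-subgroup $H(f) \le M^n$, a permutation $\sigma_f$ of $M^n/H(f)$, and a family of pp-bijections sending each coset $H(f)+q$ to $\sigma_f(H(f)+q)$. If $\sigma_f$ moves some coset $C$, then $C \subseteq \operatorname{Supp}(f)$ and $\dim(C) = n$, contradicting $f \in \Omega^n_{n-1}$, so $\sigma_f = \id$. On each coset $C$, the restriction $f|_C$ is, after translation, a pp-automorphism of $H(f) \cong M^n$, which by Proposition \ref{Aut} has the form $x \mapsto Ax + b$ for some $A \in \GL[n](R)$ and $b \in M^n$. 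If $A = I$ with $b \neq 0$, then $f|_C$ has no fixed points and $\operatorname{Supp}(f|_C) = C$ has dimension $n$; if $A \neq I$, then $A - I$ is a non-zero endomorphism of $R^n$, so $\operatorname{rank}(\ker(A-I)) < n$ and the fixed-point set of $f|_C$ has dimension strictly less than $n$. Either way $\dim \operatorname{Supp}(f|_C) = n$, contradicting $f \in \Omega^n_{n-1}$, so $f|_C = \id_C$ on every coset and $f = \id_{M^n}$.

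For condition (iii), given $g \in \Omega^n_n$, apply the Baur-Monk theorem \cite{Baur} and the block decomposition of \cite[Lemma~2.5.7]{Kuber1} to write the graph $\Gamma_g \subseteq M^{2n}$ as a finite disjoint union of blocks $B_j = P_j \setminus \bigcup \beta_j$ with each $P_j$ a pp-coset. Separate these into \emph{large} blocks (those with $\dim B_j = n$) and \emph{small} ones (those with $\dim B_j < n$). Since $\Gamma_g$ is a function graph, on each large $P_j$ the projection $\pi_1$ is injective on a dim-$n$ subset, and as $P_j$ is a coset of a pp-subgroup this forces $\pi_1$ to be injective on all of $P_j$; hence each large $P_j$ is itself the graph of a pp-bijection $f_j$ between pp-cosets in $M^n$. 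Choose a finite-index pp-subgroup $H \le M^n$ refining the finitely many $\pi_1(P_j^\circ)$ and $\pi_2(P_j^\circ)$ that arise from the large blocks, fine enough that each $H$-coset receives its dim-$n$ portion of $\Gamma_g$ from a single large block. Define $u$ coset-by-coset via these pp-bijections; by construction $u$ is an $n$-automorphism and $u^{-1}g$ agrees with $\id_{M^n}$ off a set of dimension $< n$, so $u^{-1}g \in \Omega^n_{n-1}$.

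The main obstacle is making step (iii) precise: one must verify that the pp-bijections from the various large blocks assemble into a well-defined bijection $u$ of $M^n$, which requires choosing $H$ carefully so that both the induced permutation on $M^n/H$ and the coset-wise pp-bijections cohere. This in turn relies on the closure of finite-index pp-subgroups of $M^n$ under intersection and on the finiteness of the block decomposition, both of which are available in the PID setting.
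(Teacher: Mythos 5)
Your overall strategy---normality, trivial intersection, and the factorization $g=uh$ with $u\in\Upsilon^n$, $h\in\Omega^n_{n-1}$---is the same as the paper's, which writes out only the factorization and leaves (i) and (ii) implicit; your arguments for (i) and (ii) are correct and are a welcome addition. The genuine problem sits in step (iii), at precisely the point you flag as "the main obstacle": the finite-index pp-subgroup $H$ refining the groups $\pi_1(P_j^\circ)$ coming from the large blocks need not exist. Those groups are $n$-dimensional pp-subgroups of $M^n$, but an $n$-dimensional pp-subgroup of $M^n$ can have \emph{infinite} index, in which case no finite-index pp-subgroup refines them and the coset-by-coset construction of $u$ cannot be carried out. (The paper's own proof makes the same unjustified move when it asserts that $P_0=\bigcap_{i\leq k_1}(\pi_1(P_i))^\circ$ is a finite-index subgroup of $M^n$, so this is not a defect you introduced; but it does mean neither argument closes the gap.)

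To see that the gap is not cosmetic, take $R=M=F[X]$ with $F$ an infinite field, $n=1$. Every proper ideal of $F[X]$ has infinite index, so $\mathbf G_1=\{F[X]\}$ and $\Upsilon^1=\mathrm{Aut}_{\mathcal L}(F[X])=\{a\mapsto au+c: u\in F^\times,\ c\in F[X]\}$, while $\Omega^1_0$ consists of the finitely supported definable bijections. Let $f$ act by $a\mapsto aX$ on $F[X]\setminus XF[X]$, by the inverse assignment $aX\mapsto a$ on $XF[X]\setminus X^2F[X]$, and as the identity on $X^2F[X]$; this is a definable bijection whose support $F[X]\setminus X^2F[X]$ has dimension $1$. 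Its graph contains the large block $\{(aX,a): a\notin XF[X]\}=P\setminus Q$ with $\pi_1(P)=XF[X]$ of infinite index, and no re-blocking helps: $F[X]\setminus XF[X]$ is a union of infinitely many cosets of $XF[X]$, and any pp-coset of positive dimension contained in it lies inside a single such coset, so it admits no finite decomposition into blocks $P_i\setminus\bigcup\beta_i$ with $\dim\bigcup\beta_i<\dim P_i$. Indeed $f\notin\Upsilon^1\cdot\Omega^1_0$ at all: an affine map agreeing with $f$ outside a finite set would have to equal $\id$ (compare on $X^2F[X]$) and yet differ from $\id$ on $F[X]\setminus XF[X]$. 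So step (iii) fails for this ring, and some additional hypothesis---for instance that every $n$-dimensional pp-subgroup of $M^n$ has finite index, which does hold for vector spaces and for $\Z_{\Z}$---is needed before your assembly of $u$ (or the paper's) can be completed.
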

\begin{proof}\label{UpOmcong}
Let $f \in \A(M^n)$, $\phi(\overline{x},\overline{y})$ be an $L_R$-formula defining the graph $D\subseteq M^{2n}$ of $f$. Recall from \cite[Lemma~2.5.7]{Kuber1} that $D = \bigsqcup_{i=1}^{k_1+k_2}B_i$, where each $B_i$ is a block and $\dim(B_i)=n$ if and only if $i\leq k_1$. Further partitioning into smaller blocks if necessary, we may assume for each $i$ that $B_i= P_i\setminus \bigcup \beta_i$ for some $P_i \in \mathcal{L}_n$ and an antichain $\beta_i$ in $\mathcal{L}_n$ satisfying $\dim(\bigcup \beta_i) < \dim(P_i)$. Then $P_0:= \bigcap_{i=1}^{k_1} (\pi_1(P_i))^\circ$ is a finite-index subgroup of $M^n$. Let $\{P_0^j\mid 0\leq j \leq k-1\}$ be the set of all cosets of $M^n/P_0$. For each $0\leq j<k$, let $B_i^j\subseteq B_i$ be the block satisfying $\pi_1(B_i^j)=\pi_1(B_i)\cap P_0^j$ so that $B_i=\bigsqcup_{0\leq j<k}B_i^j$ for each $1\leq i\leq k_1$. Let $B_i^j=P_i^j\setminus\bigcup\beta_i^j$ for some $P_i^j\in\mathcal L_{2n}$ and an antichain $\beta_i^j$ in $\mathcal L_{2n}$ satisfying $\dim(\bigcup\beta_i^j)<n=\dim(P_i^j)$. Let $\phi_i^j(\overline x,\overline y)$ be the $pp$-definable formula defining $P_i^j$. Then $\bigvee_{i=1}^{k_1}\bigvee_{j=0}^{k-1}\phi_i^j(\overline{x},\overline{y})$ is a formula defining the graph of an $n$-automorphism, say $g$, of $M^n$. We have also ensured $g^{-1}f\in\Omega^n_{n-1}$ to complete the proof.
 \end{proof}

In view of the above lemma, the remaining arguments of Step II of the proof of Theorem \ref{K1FINAL} go through to give the following result for a free infinite right $R$-module $M_R$ over a PID $R$.
\begin{theorem}\label{mainth}
    Suppose $R$ is a PID and $M_R$ is an infinite free right $R$-module. Then using the notations introduced in Step II of the proof of Theorem \ref{K1FINAL}, and with $\Upsilon^n$ as in Definition \ref{Upsilon}, we have $$K_1(M_R) = \varinjlim_{n\in \mathbb N}(\Omega_n^n)^{ab}.$$ 
\end{theorem}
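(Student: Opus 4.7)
The strategy is to apply Bass's description of $K_1$ (Theorem \ref{K1Bass}) to the symmetric monoidal groupoid $(\C(M_R), \sqcup, \emptyset)$, in the streamlined countable form of Remark \ref{ctblcofinal}. Translations in $\C(M_R)$ are faithful by Remark \ref{trfaith}, so the only remaining ingredient is to exhibit a countable cofinal sequence $\{s_n\}_{n \geq 1}$ of objects in $\C$ satisfying $s_{n+1} \cong s_n \sqcup a_n$ for some $a_n \in \C$, together with the cofinality condition that for every $s \in \C$ there exist $s'$ and $n$ with $s \sqcup s' \cong s_n$.

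The plan is to take $s_n := M^n$. The factorization $s_{n+1} \cong s_n \sqcup a_n$ is immediate, since $M^{n+1}$ is the disjoint union of the definable sets $M^n \times \{0\}$ and $M^n \times (M \setminus \{0\})$, so one sets $a_n := M^n \times (M \setminus \{0\})$. For the cofinality condition, every $D \in \overline{\mathrm{Def}}(M_R)$ is by definition a definable subset of some $M^m$, and setting $D' := M^m \setminus D$, which is definable (and hence an object of $\C$), yields $D \sqcup D' = M^m = s_m$. This is the step I expect to require the most care, since equality in $K_0(M_R)$ does not in general entail the existence of an actual definable bijection $D \sqcup D' \cong M^m$; the obstacle disappears here only because $D$ literally sits inside $M^m$ as a set, so its complement witnesses monoidal cofinality on the nose.

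Combining the cofinal sequence with Remark \ref{ctblcofinal} and Theorem \ref{K1Bass} yields
\[
K_1(M_R) = K_1(\C(M_R)) \cong H_1\!\left(\varinjlim_{n \in \mathbb N} \A(M^n);\, \mathbb Z\right) \cong \varinjlim_{n\in\mathbb N} H_1(\A(M^n);\, \mathbb Z) \cong \varinjlim_{n \in \mathbb N} (\A(M^n))^{ab},
\]
where the transition maps are induced by the embedding $M^n \cong M^n \times \{0\} \subset M^{n+1}$, exactly as highlighted at the end of the vector-space proof. Since any $f \in \A(M^n)$ satisfies $\dim(f) \leq \dim(M^n) = n$, we have $\A(M^n) = \Omega^n_n$ in the notation imported from Step II of Theorem \ref{K1FINAL}, and the displayed colimit is precisely $\varinjlim_{n\in \mathbb N}(\Omega^n_n)^{ab}$, as required.

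The refined machinery of Step I (the dimension function of Definition \ref{dim} and the shift result Lemma \ref{defsetshift}) and of Step II (the semi-direct product decomposition $\Omega^n_n \cong \Upsilon^n \ltimes \Omega^n_{n-1}$ from Lemma \ref{UpOmcong}, together with the iteration that expresses $\Omega^n_n$ as an iterated semi-direct product of wreath products built from $\Upsilon^m$ and $\Sigma^n_m$) is not logically required for the colimit identity itself, but is exactly what makes the right-hand side computable in the subsequent sections, once one has a handle on the abelianizations $(\GL[n](R))^{ab}$.
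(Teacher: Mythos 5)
Your proof is correct and follows essentially the same route as the paper: the identity $K_1(M_R)\cong\varinjlim_{n}(\Omega^n_n)^{ab}$ is obtained exactly as at the end of Step III of the vector-space computation, by combining Bass's theorem (Theorem \ref{K1Bass}) with Remarks \ref{ctblcofinal} and \ref{trfaith} applied to the cofinal sequence $M, M^2,\dots$ (whose cofinality via complements you verify more explicitly than the paper does). You are also right that the dimension theory of Step I and the decomposition $\Omega^n_n\cong\Upsilon^n\ltimes\Omega^n_{n-1}$ of Step II serve only to make the right-hand side computable in the later sections, not to establish the colimit identity itself.
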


\section{Computation of $K_1$ of free modules over certain Euclidean domains}\label{K1ed}
In general, given a PID $R$, exact computation of $(GL_2(R))^{ab}$ is not known, and thus the computation of $K_1(M_R)$ using the recipe in Theorem \ref{mainth} is not possible for all PIDs. However, under a mild condition on an ED $R$, we compute $K_1$ for free $R$-modules in Theorems \ref{T=T0} and \ref{TneqT0}. As a consequence, we compute $K_1(M_{F[X]})$ for a field $F$ with $\mathrm{char}(F)=0$ (Corollary \ref{polyring}) as well as $K_1(V_F)$ for a field $F$ with at least $3$ elements (Corollaries \ref{F2k} and \ref{Fpk}).   

For a commutative unital ring $R$, the notation $SL_n(R)$ denotes the group of special linear group, i.e., $n\times n$ matrices with determinant $1$ so that $GL_n(R)/SL_n(R)\cong R^\times$ for each $n\geq1$. The notation $E_n(R)$ denote the subgroup of $GL_n(R)$ generated by $n\times n$ elementary matrices \cite[Definition~III.1.2]{Weibel}.

First we compute the abelianization of the action of $GL_n(R)$ on $M^n$ by multiplication.
\begin{lemma}\label{quotfree}
Suppose $R$ is any commutative ring with unity and $M_R$ is a right $R$-module. Then for each $n\geq2$ we have $(GL_n(R) \ltimes M^n)^{ab} \cong (GL_n(R))^{ab}$. Moreover, if the multiplicative identity $1$ in $R$ can be written as a sum of two units then the conclusion also holds true for $n=1$.
\end{lemma}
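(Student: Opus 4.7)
The plan is to apply Lemma~\ref{semiab} directly to the semi-direct product, which gives
\begin{equation*}
(GL_n(R)\ltimes M^n)^{ab}\cong (GL_n(R))^{ab}\times (M^n)_{GL_n(R)}.
\end{equation*}
The entire task therefore reduces to showing that the coinvariant group $(M^n)_{GL_n(R)}$ is trivial, i.e., that every $v\in M^n$ lies in the additive subgroup $H\leq M^n$ generated by $\{g\cdot v-v : g\in GL_n(R),\ v\in M^n\}$.

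For $n\geq 2$ I would exploit elementary matrices. Applying the elementary matrix $E_{12}(1)=I_n+e_{12}$ to a vector of the form $(0,m,0,\ldots,0)$ yields the difference $(m,0,\ldots,0)$, placing every vector supported on the first coordinate inside $H$. Running the same argument with $E_{i1}(1)$ on $(m,0,\ldots,0)$ propagates this conclusion to every coordinate slot, so $H$ contains all vectors with a single non-zero entry, and hence all of $M^n$. Note that this argument is insensitive to the structure of $R$ beyond having a unit $1$.

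For $n=1$, the group $GL_1(R)=R^\times$ acts on $M$ by scalar multiplication, so $H$ is the subgroup generated by elements of the form $um-m$ with $u\in R^\times$ and $m\in M$. If $1=u+v$ with $u,v\in R^\times$, then the identity
\begin{equation*}
(um-m)+(vm-m)=(u+v)m-2m=-m
\end{equation*}
places $-m$, and therefore $m$, in $H$ for every $m\in M$, giving $H=M$. The only genuine subtlety here is that the unit-sum hypothesis is essential rather than technical: without it one typically has nontrivial coinvariants (for example $R=\mathbb Z$ yields $M/2M$), which explains why the statement has to bifurcate at $n=1$ and why I do not expect to avoid the extra hypothesis by a cleverer choice of relators.
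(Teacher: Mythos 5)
Your proof is correct and follows essentially the same route as the paper's: reduce via Lemma~\ref{semiab} to showing the coinvariants $(M^n)_{GL_n(R)}$ vanish, then kill them using elementary matrices for $n\geq 2$ and the unit-sum identity $(um-m)+(vm-m)=-m$ for $n=1$. You are in fact slightly more complete than the paper, whose written proof only carries out the elementary-matrix computation (which needs $n\geq2$) and leaves the $n=1$ case implicit.
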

\begin{proof}
We prove the lemma when $M_R = R_R$; the proof for $M_R$ follows verbatim. Thanks to Lemma \ref{semiab}, we have $(GL_n(R) \ltimes R^n)^{ab} \cong (GL_n(R))^{ab} \times R^n_{(GL_n(R))}$, so it is enough to show that $R^n_{(GL_n(R))}$ vanishes. By definition, $R^n_{(GL_n(R))} = R^n/\langle \overline{x}A - \overline{x}\mid A \in GL_n(R),\ \overline{x} \in R^n\rangle$.  Let $\overline{x} = (x_1,x_2,....,x_n) \in R^n$ and for $i \neq j$; $E_{ij} \in E_n(R)$ be such that $E_{ij} = I+e_{ij}$,  where $e_{ij} \in M_n(R)$ and every entry of the matrix $e_{ij}$ is $0$ apart from the $ij^{th}$ entry, which equals $1$. If $E_{ij} \in \{E_{1n}\}\cup\{E_{k{k-1}}\mid 2\leq k\leq n\}$ then $\x E_{ij} - \x = (0,\hdots,x_i,\hdots,0)$, where $x_i$ appears in the $j^{th}$ place. Since $x_i \in R$ is arbitrary we conclude that $\langle \overline{x}A - \overline{x}\mid A \in GL_n(R),\ \overline{x} \in R^n\rangle = R^n$.   
\end{proof}
\begin{remark}\label{chad}
The above lemma need not hold for $n=1$ if $1$ cannot be written as a sum of two units, e.g., if $R=M=\mathbb{Z}$ then $(GL_{1}(\mathbb Z) \ltimes \mathbb{Z})^{ab}=(\mathbb{Z}_2 \ltimes \mathbb{Z})^{ab} \cong \mathbb{Z}_2 \oplus \mathbb{Z}_2$ by Lemma \ref{semiab}.
\end{remark}

Now we recall several celebrated results regarding the computation of $(GL_n(R))^{ab}$. 

\begin{theorem}\cite[Proposition~9.2]{Cohn}\label{Cohn} If $R$ is an ED satisfying $1=u+v$ for units $u,v$, then $[GL_2(R),GL_2(R)] = E_2(R)$.
\end{theorem}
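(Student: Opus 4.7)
My plan is to establish the two inclusions separately. For $E_2(R)\subseteq[GL_2(R),GL_2(R)]$, I would start from the conjugation identity
\[
D\,E_{12}(a)\,D^{-1}=E_{12}(d_1 d_2^{-1}\,a),
\]
where $E_{12}(a):=I+a\,e_{12}$, $D=\operatorname{diag}(d_1,d_2)\in GL_2(R)$, and $a\in R$. Rearranging yields the commutator formula $[D,E_{12}(a)]=E_{12}\bigl((d_1 d_2^{-1}-1)a\bigr)$. The hypothesis $1=u+v$ with $u,v\in R^\times$ supplies a unit $u$ for which $u-1=-v$ is again a unit; choosing $D=\operatorname{diag}(u,1)$ and letting $a$ range over $R$, every $E_{12}(b)$ with $b\in R$ is realized as a single commutator. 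A symmetric argument with $D=\operatorname{diag}(1,u)$ produces every $E_{21}(b)$, and since these two families generate $E_2(R)$, the first inclusion follows.

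For the reverse inclusion, I would observe that $GL_2(R)/SL_2(R)\cong R^\times$ is abelian, so $[GL_2(R),GL_2(R)]\subseteq SL_2(R)$. It therefore suffices to prove $SL_2(R)=E_2(R)$, a classical consequence of the Euclidean hypothesis alone. Given $A\in SL_2(R)$, the two entries of its first column generate $R$ (since $\det A=1$), so repeated Euclidean division reduces this column to $(1,0)^T$ by left-multiplication by elementary matrices; the resulting matrix is then upper unitriangular by the determinant condition, and one further elementary column operation clears the off-diagonal entry, exhibiting $A$ as a product of elementary matrices.

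I expect the first inclusion to carry the real content of the theorem: one must realize every prescribed elementary entry as a single commutator, and this is precisely where the hypothesis $1=u+v$ enters, guaranteeing a diagonal conjugator $D$ for which $d_1 d_2^{-1}-1$ is a unit. Without this hypothesis the commutator formula delivers only $E_{12}(b)$ with $b$ in the proper ideal $(d_1 d_2^{-1}-1)R$, and the conclusion can genuinely fail --- as signalled in the introduction, $R=\mathbb Z$ is precisely such a case. The reverse inclusion, by contrast, requires nothing beyond the Euclidean algorithm.
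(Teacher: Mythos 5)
The paper does not prove this statement at all --- it is quoted verbatim from Cohn \cite[Proposition~9.2]{Cohn} and used as a black box --- so there is no in-paper argument to compare against. Your sketch is a correct and essentially self-contained proof of the special case needed here. The key computation $[D,E_{12}(a)]=E_{12}\bigl((d_1d_2^{-1}-1)a\bigr)$ is right for a commutative ring, and the hypothesis $1=u+v$ with $u,v\in R^\times$ does exactly what you say: taking $D=\operatorname{diag}(u,1)$ makes $u-1=-v$ a unit, so every $E_{12}(b)$ (and symmetrically every $E_{21}(b)$) is a single commutator, giving $E_2(R)\subseteq[GL_2(R),GL_2(R)]$. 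The reverse inclusion via $\det$ and $SL_2(R)=E_2(R)$ is the paper's Theorem \ref{ED}, so you are not assuming anything the paper does not already invoke. Your diagnosis of where the unit hypothesis is indispensable (and why $\mathbb Z$ fails) matches the paper's Remark \ref{chad} and \S\ref{DefZ}. One step is slightly compressed: the Euclidean algorithm a priori reduces the first column to $(g,0)^T$ with $g$ a unit rather than to $(1,0)^T$; you need the further standard observation that $\operatorname{diag}(g,g^{-1})\in E_2(R)$ (or the explicit three elementary row operations turning $(g,0)^T$ into $(1,0)^T$) to finish. That is routine, but worth a line in a full write-up. Note also that Cohn's actual Proposition~9.2 is proved in the much more general setting of $GE_2$-rings; your direct argument is more elementary and entirely adequate for the Euclidean case used in this paper.
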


\begin{theorem}\cite[Corollary~2.3]{Heit}\cite[Theorem~10.15]{Magurn}\label{SLn}
Let $R$ be an unital ring with Krull dimension at most $1$, for example, a PID, then $[GL_n(R),GL_n(R)]=E_n(R)$ for $n>2$. 
\end{theorem}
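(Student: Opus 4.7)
The plan is to prove the two inclusions $E_n(R) \subseteq [GL_n(R), GL_n(R)]$ and $[GL_n(R), GL_n(R)] \subseteq E_n(R)$ separately. The first (easy) inclusion follows from the Whitehead commutator identity $[e_{ij}(a), e_{jk}(b)] = e_{ik}(ab)$, valid whenever the indices $i, j, k$ are pairwise distinct. Since $n \geq 3$, for any elementary generator $e_{ik}(r)$ a third index $j \notin \{i, k\}$ is available, giving $e_{ik}(r) = [e_{ij}(r), e_{jk}(1)] \in [E_n(R), E_n(R)] \subseteq [GL_n(R), GL_n(R)]$. As a byproduct this shows that $E_n(R)$ is a perfect group, and this step genuinely needs $n \geq 3$ so that a third free index exists.

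For the reverse inclusion, the determinant $\det : GL_n(R) \to R^\times$ has abelian target and therefore kills all commutators, so $[GL_n(R), GL_n(R)] \subseteq SL_n(R)$. The problem thus reduces to establishing $SL_n(R) = E_n(R)$ for $n > 2$, equivalently to the vanishing of the unstable Whitehead quotient $SL_n(R)/E_n(R)$.

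For this I would appeal to the Bass–Vaserstein stability package. Bass's theorem gives the stable rank bound $sr(R) \leq \dim(R) + 1 \leq 2$ for any commutative Noetherian ring of Krull dimension at most $1$. Surjective stability then yields that $E_n(R)$ is normal in $GL_n(R)$ and that $SL_n(R)/E_n(R) \cong SK_1(R)$ for every $n > sr(R)$, which is guaranteed by the hypothesis $n \geq 3$. Finally one invokes $SK_1(R) = 0$ for such $R$; for a PID this is classical and can be derived by combining the Smith normal form reduction with the Whitehead observation that $\operatorname{diag}(u, u^{-1}, 1, \ldots, 1) \in E_n(R)$ for $n \geq 2$, which absorbs the unit leftover after the Bezout step. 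The main obstacle is precisely this vanishing $SK_1(R) = 0$: the Whitehead identity and the stability theorems are essentially formal, but expressing an arbitrary determinant-one matrix as a product of elementary matrices uses the Krull dimension bound in an essential way (indeed $SK_1$ is typically nonzero in higher dimensions), and the general Krull dimension $1$ Noetherian case requires the more delicate Vaserstein/Suslin stable-range arguments underlying the two cited proofs.
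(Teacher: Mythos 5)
The paper offers no proof of this statement --- it is quoted directly from Heitmann and Magurn --- so the only question is whether your argument is sound. Your first inclusion $E_n(R)\subseteq[GL_n(R),GL_n(R)]$ via the commutator identity $[e_{ij}(a),e_{jk}(b)]=e_{ik}(ab)$ for $n\geq 3$ is correct and standard.

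The reverse inclusion, however, has a genuine gap: you reduce $[GL_n(R),GL_n(R)]\subseteq E_n(R)$ to the strictly stronger statement $SL_n(R)=E_n(R)$, i.e.\ to the vanishing of $SK_1(R)$, and that vanishing is false under the stated hypotheses. Bass produced Dedekind domains (Krull dimension $1$) with $SK_1\neq 0$, and Grayson exhibited a principal ideal domain with $SK_1\neq 0$; for such rings $E_n(R)$ is a proper subgroup of $SL_n(R)$ in the stable range, yet the theorem still asserts $[GL_n(R),GL_n(R)]=E_n(R)$. So the content of the theorem is precisely that the commutator subgroup can be smaller than $SL_n(R)$, and any proof routed through $SK_1(R)=0$ cannot work in the stated generality (it happens to succeed for Euclidean domains, which is what Theorem~\ref{ED} covers, but not for general Krull-dimension-one rings or even all PIDs). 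The repair is short and already latent in your own setup: Bass's bound gives $sr(R)\leq 2$, and injective plus surjective stability give $GL_n(R)/E_n(R)\cong K_1(R)$ for $n\geq sr(R)+1$, in particular for $n\geq 3$; since $K_1(R)$ is abelian by the Whitehead lemma, the quotient $GL_n(R)/E_n(R)$ is abelian, whence $[GL_n(R),GL_n(R)]\subseteq E_n(R)$ with no appeal to $SK_1$ at all. You should also note that Bass's stable range estimate requires $R$ to be commutative Noetherian (or at least to have Noetherian maximal spectrum of dimension $\leq 1$), a hypothesis implicit in the sources but worth stating.
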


\begin{theorem}\cite[Theorem~4.3.9]{HO}\label{ED}
If $R$ is an ED, then $SL_n(R) = E_n(R)$ for $ n \geq 1$.
\end{theorem}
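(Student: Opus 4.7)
The plan is to proceed by induction on $n$, reducing an arbitrary $A \in SL_n(R)$ to the identity by elementary row and column operations and thereby exhibiting it as an element of $E_n(R)$. The base case $n=1$ is trivial: both $SL_1(R)$ and $E_1(R)$ are the trivial group (the former because the determinant must equal $1$; the latter because elementary matrices $I + r e_{ij}$ require off-diagonal positions, of which there are none).

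For the inductive step $n \geq 2$, fix $A = (a_{ij}) \in SL_n(R)$. First I would apply the Euclidean algorithm to pairs of entries in the first column, each division $a \mapsto a - qb$ corresponding to left multiplication by an elementary matrix. After finitely many steps, this reduces the first column to $(d, 0, \ldots, 0)^T$ where $d = \gcd(a_{11}, \ldots, a_{n1})$. Cofactor expansion along the first column gives $d \mid \det A = 1$, so $d$ is a unit $u$. A symmetric sequence of column operations then clears the remaining entries of the first row, bringing the matrix (up to left and right multiplication by elements of $E_n(R)$) into block-diagonal form $\mathrm{diag}(u, B)$ with $B \in GL_{n-1}(R)$ and $u \cdot \det B = 1$.

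Next I would invoke Whitehead's lemma: in any unital ring and for $n \geq 2$, the matrix $\mathrm{diag}(u, u^{-1}, 1, \ldots, 1)$ lies in $E_n(R)$. The key $n=2$ identity writes $\mathrm{diag}(u, u^{-1})$ as the product of three triangular factors of the form $I + r e_{ij}$ together with the rotation matrix $\left(\begin{smallmatrix} 0 & -1 \\ 1 & 0 \end{smallmatrix}\right)$, the rotation itself being expressible as a product of three elementary matrices over any ring (using the classical $\left(\begin{smallmatrix}1 & -1 \\ 0 & 1\end{smallmatrix}\right)\left(\begin{smallmatrix}1 & 0 \\ 1 & 1\end{smallmatrix}\right)\left(\begin{smallmatrix}1 & -1 \\ 0 & 1\end{smallmatrix}\right)$ identity). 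Multiplying $\mathrm{diag}(u, B)$ on the right by $\mathrm{diag}(u^{-1}, u, 1, \ldots, 1)$ yields $\mathrm{diag}(1, C)$ with $C \in SL_{n-1}(R)$, after which the inductive hypothesis completes the reduction.

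The main obstacle lies in the very first step of the induction at $n = 2$: the Euclidean reduction of the first column relies crucially on the existence of divisions with strictly smaller remainder. For a PID that is not Euclidean---such as the ring of integers of $\mathbb{Q}(\sqrt{-19})$---the equality $SL_2(R) = E_2(R)$ genuinely fails, even though a slicker $3 \times 3$ manipulation still recovers $SL_n(R) = E_n(R)$ for $n \geq 3$ (consistent with Theorem \ref{SLn} being stated only for $n > 2$ in the general PID case). This is precisely why the hypothesis of being a Euclidean domain is what enables the conclusion uniformly for all $n \geq 1$.
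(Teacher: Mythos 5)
The paper does not prove this statement; it is quoted verbatim from Hahn--O'Meara \cite[Theorem~4.3.9]{HO}. Your argument is the standard textbook proof of that cited result (Euclidean reduction of the first column, divisibility of the gcd by $\det A=1$, Whitehead's lemma to absorb the unit, then induction), and it is correct, including the closing observation that the Euclidean hypothesis is genuinely needed at $n=2$.
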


Suppose the theory $T$ of the module $M_R$ satisfies $T=T^{\aleph_0}$. Under the combined hypotheses of Lemma \ref{quotfree}, and Theorems \ref{Cohn}, \ref{SLn} and \ref{ED} on $R$, we can follow through the proof of Proposition \ref{Omega} to obtain 
\begin{equation}\label{omeganab}
(\Omega_n^n)^{ab}\cong (GL_n(R))^{ab}\oplus\bigoplus_{i=0}^{n-1}((GL_i(R))^{ab}\oplus\mathbb Z_2).
\end{equation} Then we use the directed colimit description of $K_1(M_R)$ in Theorem \ref{mainth} to get a nice explicit expression for $K_1(M_R)$.
\begin{theorem}\label{T=T0}
Let $R$ be a Euclidean domain satisfying $1=u+v$ for some units $u,v$, and $M_R$ be an infinite free right $R$-module. If the theory $T$ of $M_R$ satisfies $T = T^{\aleph_0}$ then  $$K_1(M_R) \cong K_1(R_R) \cong \bigoplus_{n=0}^\infty((GL_n(R))^{ab}\oplus\mathbb Z_2)\cong\mathbb Z_2\oplus\bigoplus_{n=1}^\infty(R^\times\oplus\mathbb Z_2).$$ 
\end{theorem}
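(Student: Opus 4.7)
The plan is to follow the template already laid out for Theorem \ref{K1FINAL} in $\S$\ref{K1V}, using Theorem \ref{mainth} as the entry point. The first move is to observe that the hypothesis $T = T^{\aleph_0}$ collapses the directed system appearing in Proposition \ref{Upsi}: by definition, every index of a pair of $pp$-definable subgroups is either $1$ or $\infty$, so the only $pp$-definable finite-index subgroup of $M^n$ is $M^n$ itself. Consequently the colimit trivializes and $\Upsilon^n(M_R) \cong \mathrm{Aut}_{\mathcal{L}}(M^n) \cong GL_n(R) \ltimes M^n$, matching exactly the shape of $\Upsilon^n$ in the vector-space case. The iterated semi-direct product decomposition of $\Omega_n^n$ in Equation \eqref{autitsemiwr} and the wreath-product identifications $\Upsilon^n_m = \Upsilon^m \wr \Sigma^n_m$ then apply verbatim.

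Next I would mimic Proposition \ref{Omega} to compute $(\Omega_n^n)^{ab}$. The two main ingredients are Lemma \ref{quotfree}, which uses the hypothesis $1 = u+v$ to yield $(GL_n(R) \ltimes M^n)^{ab} \cong (GL_n(R))^{ab}$ (thereby killing the $M^n$-coinvariants already at the first layer), and Lemma \ref{wt}, which gives $(\Upsilon^n_m)^{ab} \cong (\Upsilon^m)^{ab} \oplus \mathbb{Z}_2$. Iteratively applying Lemma \ref{semiab} to the decomposition \eqref{autitsemiwr}, together with the observation (as in the vector-space argument) that the conjugation action of $\Upsilon^n_m$ on lower layers preserves determinants and permutation parities and so acts trivially on the relevant abelianizations, produces exactly Equation \eqref{omeganab}. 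To evaluate the right-hand side I would invoke Theorems \ref{Cohn} and \ref{SLn} to get $[GL_n(R),GL_n(R)] = E_n(R)$ for all $n \geq 1$, Theorem \ref{ED} to identify $E_n(R) = SL_n(R)$, and the determinant map to conclude $(GL_n(R))^{ab} \cong R^\times$ for $n \geq 1$, with $(GL_0(R))^{ab}$ trivial; for $n=1$ this is automatic since $GL_1(R) = R^\times$ is abelian.

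Finally, substituting into \eqref{omeganab} gives $(\Omega_n^n)^{ab} \cong R^\times \oplus \bigoplus_{i=1}^{n-1}(R^\times \oplus \mathbb{Z}_2) \oplus \mathbb{Z}_2$, and the dimension-preserving inclusion $(\Omega_n^n)^{ab} \hookrightarrow (\Omega_{n+1}^{n+1})^{ab}$ constructed in Step III of the vector-space proof appends one further $R^\times \oplus \mathbb{Z}_2$ at each step. Passing to $\varinjlim_n$ via Theorem \ref{mainth} then gives $K_1(M_R) \cong \mathbb{Z}_2 \oplus \bigoplus_{n=1}^\infty(R^\times \oplus \mathbb{Z}_2)$. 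The isomorphism $K_1(M_R) \cong K_1(R_R)$ follows by running the same recipe for $R_R$: the condition $T = T^{\aleph_0}$ depends only on the pp-sub-invariants, which are determined by $R$, and every quantity appearing in the final expression depends only on $R$. I expect the main conceptual step to be the first paragraph, namely confirming that $T = T^{\aleph_0}$ really does reduce $\Upsilon^n$ to $GL_n(R) \ltimes M^n$ and thereby makes the PID analysis fit into the vector-space template; once this is in hand, everything downstream is a careful transcription of the abelianization bookkeeping already carried out in $\S$\ref{K1V}, combined with the classical facts on $[GL_n(R),GL_n(R)]$ cited above.
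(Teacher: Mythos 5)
Your proposal is correct in its main thrust and follows the paper's (very terse) proof essentially verbatim: the paper likewise observes that under $T=T^{\aleph_0}$ one can ``follow through the proof of Proposition \ref{Omega}'' using Lemma \ref{quotfree} together with Theorems \ref{Cohn}, \ref{SLn} and \ref{ED} to obtain Equation \eqref{omeganab}, and then passes to the colimit via Theorem \ref{mainth}. Your opening observation --- that $T=T^{\aleph_0}$ forces the only $pp$-definable finite-index subgroup of $M^n$ to be $M^n$ itself, collapsing the directed system of Proposition \ref{Upsi} so that $\Upsilon^n\cong GL_n(R)\ltimes M^n$ --- is exactly the point the paper leaves implicit, and you are right that it is the only genuinely new step beyond the vector-space template.

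One caveat: your justification of $K_1(M_R)\cong K_1(R_R)$ rests on the assertion that the condition $T=T^{\aleph_0}$ ``is determined by $R$,'' and that is false. The invariants $\mathrm{Inv}(M;A,B)$ are indices computed in the module: for an infinite-rank free module $M_R$ every such index is $1$ or $\infty$ (a $pp$-subgroup of $M^n$ is the direct sum of infinitely many copies of the corresponding subgroup of $R^n$), so $Th(M_R)=T^{\aleph_0}$ can hold while $Th(R_R)\neq T^{\aleph_0}$ --- e.g.\ $R=F_{p^k}[X]$. In that situation one cannot ``run the same recipe for $R_R$'' under the hypothesis of this theorem; one must invoke Theorem \ref{TneqT0} for $R_R$ and check that the resulting group is (abstractly) isomorphic to the displayed one, which involves the even-index cofinality dichotomy. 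The paper glosses over this point entirely, so you are no worse off than the source, but your stated reason is not a valid one.
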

Now suppose that the theory $T$ of the module $M_R$ does not satisfy $T=T^{\aleph_0}$. Then in addition to all the results used in the proof of the above theorem, we also need to use the dichotomy in the following lemma while obtaining the analogue of Lemma \ref{Omega}.
\begin{lemma}\label{evind}
Suppose the theory $T$ of the module $M_R$ satisfies $T\neq T^{\aleph_0}$. Let $(\mathbf G_1,\leq)^{op}$ be the directed system described in Proposition \ref{Upsi}. If this directed system contains a cofinal system of even-indexed subgroups of $M$ then for $n\geq1$, $(\Upsilon^n)^{ab} \cong (GL_n(R)\ltimes M^n)^{ab}$; otherwise, $(\Upsilon^n)^{ab} \cong (GL_n(R)\ltimes M^n)^{ab} \oplus \mathbb{Z}_2$.     
\end{lemma}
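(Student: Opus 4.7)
The plan is to apply Proposition \ref{Upsi}, which presents $\Upsilon^n$ as the directed colimit
\[
\Upsilon^n \cong \varinjlim_{G \in (\mathbf G_n,\leq)^{op}} \left((GL_n(R)\ltimes M^n)\wr \FS(M^n/G)\right),
\]
and to commute abelianization with this colimit, since abelianization preserves directed colimits as a left adjoint. Because the hypothesis $T \neq T^{\aleph_0}$ forces the existence of proper finite-index $pp$-definable subgroups, cofinally many $G$ satisfy $|M^n/G| \geq 2$, and Lemma \ref{wt} then identifies each such term with $(GL_n(R) \ltimes M^n)^{ab} \oplus \mathbb{Z}_2$, the second summand recording the sign of the permutation part.

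Next, I would analyze the transition maps on each summand. For the first summand, the embedding $\alpha_2$ described after Proposition \ref{Upsi} replicates each coordinate $[G':G]$ times, but after the abelianization map $\varepsilon:(f_x)_x\mapsto\prod_x f_x$ from the proof of Lemma \ref{wt}, together with the canonical identifications of $\mathrm{Aut}_\mathcal{L}(G)$ with $GL_n(R) \ltimes M^n$ from Proposition \ref{Aut} across varying $G$'s, the induced map on $(GL_n(R)\ltimes M^n)^{ab}$ turns out to be the identity. On the sign factor, the embedding $\alpha_1$ sends any transposition in $\FS(M^n/G')$ to a product of $[G':G]$ transpositions in $\FS(M^n/G)$, so the induced map $\mathbb{Z}_2\to\mathbb{Z}_2$ is the identity when $[G':G]$ is odd and the zero map when $[G':G]$ is even.

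The dichotomy of the lemma then follows from Proposition \ref{evsbg}. If $(\mathbf{G}_1,\leq)^{op}$ contains a cofinal system of even-indexed subgroups of $M$, the equivalence (1)$\Leftrightarrow$(3) of Proposition \ref{evsbg} ensures that every sign eventually becomes trivial along a cofinal subsystem, giving $\varinjlim\mathbb{Z}_2=0$ and $(\Upsilon^n)^{ab}\cong(GL_n(R)\ltimes M^n)^{ab}$. Otherwise there is a cofinal subsystem along which the transitions on $\mathbb{Z}_2$ are all the identity, so the colimit of sign factors is $\mathbb{Z}_2$ and we obtain $(\Upsilon^n)^{ab}\cong(GL_n(R)\ltimes M^n)^{ab}\oplus\mathbb{Z}_2$. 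The main technical step is verifying that the first-summand transitions are the identity after passing through $\varepsilon$; this requires the careful bookkeeping of translation components in $M^n$ across varying $G$, which should follow from the explicit description of the embeddings $\alpha_1,\alpha_2$ given in the examples after Proposition \ref{Upsi}, with the sign-factor analysis combined with Proposition \ref{evsbg} then closing the argument.
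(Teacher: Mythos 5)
Your proposal follows essentially the same route as the paper: the paper's entire proof is that the lemma ``follows from Proposition \ref{Upsi} and Proposition \ref{evsbg} since abelianization commutes with colimits,'' and you invoke exactly these ingredients (plus Lemma \ref{wt} for the term-wise abelianizations), merely spelling out the transition-map analysis that the paper leaves implicit. Your flagged ``main technical step'' (that the first-summand transitions induce isomorphisms after $\varepsilon$) is likewise left unaddressed in the paper's one-line proof, so your write-up is, if anything, more explicit than the original.
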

\begin{proof}
This follows from Proposition \ref{Upsi} and Proposition \ref{evsbg} since abelianization commutes with colimits.
\end{proof}
Using the notations of the above lemma, if $(\mathbf G_1,\leq)^{op}$ contains a cofinal system of even-indexed subgroups then the isomorphism in Equation \eqref{omeganab} still remains valid; otherwise for $n\geq 1$ we have
$$(\Omega^n_n)^{ab} \cong ((GL_n(R))^{ab}\oplus \mathbb{Z}_2)\oplus \bigoplus_{i=1}^{n-1}((GL_i(R))^{ab} \oplus \mathbb{Z}_2 \oplus \mathbb{Z}_2) \oplus \mathbb{Z}_2.$$
Using these expressions together with the directed colimit description of $K_1(M_R)$ in Theorem \ref{mainth}, we get the following result. 
\begin{theorem}\label{TneqT0}
Let $R$ be a Euclidean domain satisfying $1=u+v$ for some units $u,v$, and $M_R$ be an infinite free right $R$-module. Suppose that the theory $T$ of $M_R$ satisfies $T \neq T^{\aleph_0}$. Using the notations of Lemma \ref{evind}, if $(\mathbf G_1,\leq)^{op}$ contains a cofinal system of even-indexed subgroups of $M$ then $$K_1(M_R)\cong K_1(R_R) \cong \bigoplus_{n=0}^\infty((GL_n(R))^{ab}\oplus\mathbb Z_2)\cong\mathbb Z_2\oplus\bigoplus_{n=1}^\infty(R^\times\oplus\mathbb Z_2);$$ otherwise 
$$K_1(M_R) \cong K_1(R_R) \cong \mathbb Z_2 \oplus \bigoplus_{n=1}^\infty((GL_n(R))^{ab}\oplus\mathbb Z_2\oplus \mathbb{Z}_2) \cong \mathbb Z_2 \oplus \bigoplus_{n=1}^\infty(R^\times\oplus\mathbb Z_2\oplus \mathbb{Z}_2).$$
\end{theorem}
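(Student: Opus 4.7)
The plan is to mimic the proof strategy of Theorem \ref{T=T0}, invoking Theorem \ref{mainth} to reduce $K_1(M_R)$ to the directed colimit $\varinjlim_n(\Omega^n_n)^{ab}$, and substituting the modified formula for $(\Upsilon^m)^{ab}$ provided by Lemma \ref{evind} into the iterated semi-direct product decomposition of Equation \eqref{autitsemiwr}. Since that decomposition
$$\Omega^n_n \cong \Upsilon^n \ltimes (\Upsilon^n_{n-1} \ltimes (\cdots(\Upsilon^n_1 \ltimes \Omega^n_0)\cdots))$$
remains valid in the PID setting by Lemma \ref{UpOmcong}, the entire computation reduces to understanding the abelianization of each $\Upsilon^n_m$ together with the induced actions.

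First I would compute $(\Upsilon^m)^{ab}$ in both cases. The hypothesis that $1=u+v$ for units $u,v$, combined with Lemma \ref{quotfree}, yields $(GL_m(R)\ltimes M^m)^{ab}\cong (GL_m(R))^{ab}$ for every $m\geq1$. Theorems \ref{Cohn}, \ref{SLn}, and \ref{ED} (treating $m=1$, $m=2$, and $m\geq3$ separately) give $[GL_m(R),GL_m(R)]=E_m(R)=SL_m(R)$, hence $(GL_m(R))^{ab}\cong R^\times$ uniformly in $m$. Feeding this into Lemma \ref{evind}, in the cofinal-even case one obtains $(\Upsilon^m)^{ab}\cong R^\times$, and in the other case $(\Upsilon^m)^{ab}\cong R^\times\oplus\mathbb Z_2$.

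Second, I would iterate Lemma \ref{semiab} to unwind the presentation of $\Omega^n_n$. As in Proposition \ref{Omega}, the conjugation action of $\Upsilon^n_m$ on $\Omega^n_{m-1}$ preserves both the determinant of matrices in $GL_n(R)$ and the parity of finitary permutations, so these actions become trivial after abelianization and each nested semi-direct product splits as a direct sum. Lemma \ref{wt} applied to $\Upsilon^n_m=\Upsilon^m\wr\Sigma^n_m$, together with $(\Sigma^n_m)^{ab}\cong\mathbb Z_2$ from Proposition \ref{commfinperm}, gives $(\Upsilon^n_m)^{ab}\cong(\Upsilon^m)^{ab}\oplus\mathbb Z_2$. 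Assembling everything, in the cofinal-even case we recover Equation \eqref{omeganab} verbatim, while in the other case
$$(\Omega^n_n)^{ab}\cong\bigl((GL_n(R))^{ab}\oplus\mathbb Z_2\bigr)\oplus\bigoplus_{i=1}^{n-1}\bigl((GL_i(R))^{ab}\oplus\mathbb Z_2\oplus\mathbb Z_2\bigr)\oplus\mathbb Z_2.$$

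Finally, I would pass to the directed colimit using the dimension-preserving inclusions $(\Omega^n_n)^{ab}\hookrightarrow(\Omega^{n+1}_{n+1})^{ab}$ described at the end of \S~\ref{K1V}, which shift the top-level $\Upsilon^n$ summand at stage $n$ into the middle-level $\Upsilon^{n+1}_n$ summand at stage $n+1$. The colimit stabilizes to the two displayed formulas, and $K_1(M_R)\cong K_1(R_R)$ is automatic since the resulting expression depends only on $R$. The main delicacy, in my view, will be the accounting of the two sources of $\mathbb Z_2$ factors in the non-cofinal-even case: one from each wreath product $\Upsilon^m\wr\Sigma^n_m$ and one from the Lemma \ref{evind} correction inside $\Upsilon^m$ itself. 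One must check that these two $\mathbb Z_2$'s remain distinct in the colimit, so that every index $m\geq1$ contributes precisely $R^\times\oplus\mathbb Z_2\oplus\mathbb Z_2$ rather than collapsing, while the bottom-level $\Omega^n_0=\Sigma^n_0$ contributes the single leading $\mathbb Z_2$.
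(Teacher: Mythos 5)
Your proposal is correct and follows essentially the same route as the paper: reduce to $\varinjlim_n(\Omega^n_n)^{ab}$ via Theorem \ref{mainth}, feed the two cases of Lemma \ref{evind} (together with Lemma \ref{quotfree} and Theorems \ref{Cohn}, \ref{SLn}, \ref{ED} giving $(GL_m(R))^{ab}\cong R^\times$) into the iterated semi-direct product decomposition, and unwind with Lemmas \ref{semiab} and \ref{wt}. Your intermediate formula for $(\Omega^n_n)^{ab}$ in the non-cofinal-even case agrees verbatim with the paper's, and your remark about tracking the two distinct $\mathbb Z_2$ summands per level through the colimit is exactly the bookkeeping the paper performs implicitly.
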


\begin{corollary}\label{polyring}
If $F$ is a field with $\mathrm{char}(F)=0$ and $M_{F[X]}$ is a non-zero free module over $F[x]$ then $$K_1(M_{F[X]}) \cong K_1(F[X]_{F[X]})\cong K_1(F_F) \cong\mathbb Z_2\oplus\bigoplus_{n=1}^\infty(F^\times\oplus\mathbb Z_2).$$ 
\end{corollary}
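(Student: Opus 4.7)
The plan is to deduce Corollary \ref{polyring} as a direct application of Theorem \ref{T=T0}, applied first with $R = F[X]$ to obtain $K_1(M_{F[X]}) \cong K_1(F[X]_{F[X]})$, and then with $R = F$ to obtain $K_1(F_F) \cong \mathbb{Z}_2 \oplus \bigoplus_{n=1}^\infty (F^\times \oplus \mathbb{Z}_2)$. Combining the two outputs with the identity $F[X]^\times = F^\times$ will yield the claimed chain of isomorphisms.

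First I will verify that both $F$ and $F[X]$ satisfy the ring-theoretic hypotheses of Theorem \ref{T=T0}. Both are Euclidean domains. Since $\mathrm{char}(F) = 0$, the prime field $\mathbb{Q}$ embeds into $F$, so $F$ has at least three elements; thus I can pick $u \in F \setminus \{0, 1\}$ (for instance $u = 2$), set $v := 1 - u$, and observe $u, v \in F^\times$ with $u + v = 1$. The same $u, v$ serve as units of $F[X]$, since $F[X]^\times = F^\times$.

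Next I will verify $T = T^{\aleph_0}$ for the theory $T$ of each of $M_{F[X]}$ and $F_F$. In both cases $F$ acts by restriction of scalars, and every $pp$-definable subgroup of a power of the module is closed under this $F$-action and is therefore an $F$-subspace. The quotient of a pair of such subgroups is then an $F$-vector space; if it is nonzero, it is infinite since $F$ is infinite (being of characteristic zero). Hence every $pp$-invariant $[B : A]$ lies in $\{1, \infty\}$, i.e., $T = T^{\aleph_0}$.

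With both verifications in hand, Theorem \ref{T=T0} gives
$$K_1(M_{F[X]}) \cong K_1(F[X]_{F[X]}) \cong \mathbb{Z}_2 \oplus \bigoplus_{n=1}^\infty (F[X]^\times \oplus \mathbb{Z}_2) \cong \mathbb{Z}_2 \oplus \bigoplus_{n=1}^\infty (F^\times \oplus \mathbb{Z}_2) \cong K_1(F_F),$$
completing the proof. No real obstacle is anticipated; the only point meriting any care is confirming $T = T^{\aleph_0}$, which hinges on $F$ being infinite in characteristic zero so that every nonzero $pp$-quotient is an infinite $F$-vector space.
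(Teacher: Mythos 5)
Your proposal is correct and follows exactly the route the paper intends: the corollary is a direct application of Theorem \ref{T=T0} to $R=F[X]$ (and to $R=F$), after checking that $2$ and $-1$ are units summing to $1$ and that $T=T^{\aleph_0}$ because every nonzero quotient of $pp$-definable subgroups is an infinite $F$-vector space. The final identification via $F[X]^\times=F^\times$ is also the intended one, so nothing is missing.
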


If $F$ is a finite field $\mathrm{char}(F)=2$ and $F$ contains at least $4$ elements, then the field $F_4$ with $4$ elements embeds into $F$. Moreover, if $F_4=\{0,1,a,b\}$ then $a+b=1$, and thus $F$ satisfies the hypotheses of the first case of Theorem \ref{TneqT0}, and we get the following.
\begin{corollary}\label{F2k}
Let $F$ be the finite field $F_{2^k}$, where $k\geq 2$. Then for every infinite $F$-vector space $V_F$, we have
$$K_1(V_F)\cong K_1(F_F) \cong \bigoplus_{n=0}^\infty((GL_n(F))^{ab}\oplus\mathbb Z_2)\cong\mathbb Z_2\oplus\bigoplus_{n=1}^\infty(\mathbb Z_{2^k-1}\oplus\mathbb Z_2).$$    
\end{corollary}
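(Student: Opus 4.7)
The plan is to verify that the hypotheses of the first case of Theorem \ref{TneqT0} are satisfied for $R=F_{2^k}$ with $k\geq 2$, and then read off the conclusion. First, as explained in the paragraph preceding the corollary, the inclusion $F_4\hookrightarrow F_{2^k}$ and the identity $a+b=1$ in $F_4$ show that $F_{2^k}$ is a Euclidean domain satisfying $1=u+v$ for some units $u,v$; this immediately gives access to Theorems \ref{T=T0} and \ref{TneqT0}.

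Next I would identify which of the two cases of Theorem \ref{TneqT0} applies. Since $V_F$ is a vector space over a field, the $pp$-definable subgroups of $V^n$ are exactly the $F$-subspaces. Because $F_{2^k}$ is finite, any codimension-$1$ subspace $W\leq V$ has $|V/W|=|F|=2^k$, which is finite; hence there exist nontrivial finite-index $pp$-definable subgroups and therefore $T\neq T^{\aleph_0}$. Thus we are in the setting of Theorem \ref{TneqT0}, and the next task is to decide whether $(\mathbf G_1,\leq)^{op}$ admits a cofinal system of even-indexed subgroups of $V$.

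For this, I would use the fact that every finite-index $pp$-definable subgroup of $V$ is a finite-codimension $F$-subspace, and hence has index $|F|^c=2^{kc}$ where $c\geq 1$ is the codimension; in particular the index is always even. Moreover, given any $G\in\mathbf G_1$ one can pass to a further finite-codimension subspace $G'\leq G$ of arbitrarily large codimension, so the system of \emph{all} nontrivial finite-index subspaces is itself cofinal in $(\mathbf G_1,\leq)^{op}$ and consists entirely of even-indexed subgroups. This puts us in the first case of Theorem \ref{TneqT0}, giving
\begin{equation*}
K_1(V_F)\cong K_1(F_F)\cong\bigoplus_{n=0}^\infty((GL_n(F))^{ab}\oplus\mathbb Z_2).
\end{equation*}

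Finally, I would simplify the right-hand side. For $n=0$ the group $GL_0(F)$ is trivial, contributing a single $\mathbb Z_2$. For $n\geq 1$, Theorems \ref{Cohn}, \ref{SLn} and \ref{ED} applied to the ED $F$ show that the commutator subgroup of $GL_n(F)$ equals $E_n(F)=SL_n(F)$, so the determinant map induces an isomorphism $(GL_n(F))^{ab}\cong F^\times$; and since $F=F_{2^k}$ is a finite field, $F^\times\cong\mathbb Z_{2^k-1}$. Reassembling the pieces gives the claimed description $\mathbb Z_2\oplus\bigoplus_{n=1}^\infty(\mathbb Z_{2^k-1}\oplus\mathbb Z_2)$. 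The only step requiring any thought is the cofinality verification in the third paragraph, but once one observes that every nontrivial $pp$-definable finite index is a power of $2^k$, it is immediate; the rest is a direct invocation of the machinery already developed in Theorem \ref{TneqT0}.
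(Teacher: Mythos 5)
There is a genuine gap in your second and third paragraphs. You claim that a codimension-one subspace $W\leq V$ is a $pp$-definable finite-index subgroup, conclude $T\neq T^{\aleph_0}$, and then build your cofinal system of even-indexed subgroups out of all finite-codimension subspaces. But a hyperplane of an infinite-dimensional $F$-vector space is the kernel of a linear functional, which is not part of the $L_F$-structure. Concretely, a parameter-free $pp$-formula $\phi(x)$ defines the projection onto the $x$-coordinate of the kernel of a matrix $A$ over $F$ acting on $V^{1+m}$; since $V$ is free, hence flat, that kernel is $V\otimes_F\ker(A)$ with $\ker(A)\subseteq F^{1+m}$, and its projection to the first coordinate is $V\otimes_F W$ for a subspace $W\subseteq F$, i.e.\ either $\{0\}$ or $V$. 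So the only $pp$-definable subgroups of $V$ are $\{0\}$ and $V$, every invariant $\mathrm{Inv}(V;A,B)$ is $1$ or $\infty$, and therefore $T=T^{\aleph_0}$ and $\mathbf G_1=\{V\}$ contains no proper finite-index subgroup at all. The cofinality verification that you single out as the only step requiring thought is thus arguing for the wrong case from a false premise.

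The stated conclusion nevertheless survives, because Theorem \ref{T=T0} (the case $T=T^{\aleph_0}$, which is the one that actually applies) yields exactly the same formula $\bigoplus_{n=0}^\infty((GL_n(F))^{ab}\oplus\mathbb Z_2)$ as the first case of Theorem \ref{TneqT0}; the paper's own one-line justification also cites the first case of Theorem \ref{TneqT0} without verifying its hypotheses, so your route mirrors the paper's, but the honest derivation should invoke Theorem \ref{T=T0}. Your final simplification $(GL_n(F))^{ab}\cong F^\times\cong\mathbb Z_{2^k-1}$ via Theorems \ref{Cohn}, \ref{SLn} and \ref{ED} is correct. One further small slip inherited from the paper: $F_4$ embeds in $F_{2^k}$ only when $k$ is even, so for odd $k$ you should instead take any $u\in F\setminus\{0,1\}$ and $v=1-u$; both are units and $u+v=1$, which is all that Theorems \ref{T=T0} and \ref{TneqT0} require.
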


If $F$ is a finite field  $F_{p^k}$, where $k\geq 1$ and $p$ is a prime greater than $2$. Then $-1$ and $2$ are units in $F$ satisfying $1=2+(-1)$ in $F$. Moreover, $F$ satisfies the hypotheses of the second case of Theorem \ref{TneqT0}, and thus we get the following.
\begin{corollary}\label{Fpk}
    Let $F$ be the finite field $F_{p^k}$, where $k\geq 1$ and $p$ is a prime greater than $2$. Then for every infinite $F$-vector space $V_F$, we have
$$K_1(V_F) \cong K_1(F_F) \cong \mathbb Z_2 \oplus \bigoplus_{n=1}^\infty((GL_n(F))^{ab}\oplus\mathbb Z_2\oplus \mathbb{Z}_2) \cong \mathbb Z_2 \oplus \bigoplus_{n=1}^\infty(\Z_{p^k-1}\oplus\mathbb Z_2\oplus \mathbb{Z}_2).$$
\end{corollary}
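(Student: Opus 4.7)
The plan is to derive this corollary as a direct instance of Theorem~\ref{TneqT0} applied to the ring $R = F_{p^k}$, so the work reduces to verifying each hypothesis and then identifying $(GL_n(F))^{ab}$ and $R^\times$ explicitly. First I would check the basic assumptions: $F$ is a field, hence trivially a Euclidean domain, and since $p$ is odd, the element $2$ is a unit in $F$, so $1 = 2 + (-1)$ realizes $1$ as a sum of two units. The module $V_F$ is an infinite free $F$-module by hypothesis. Thus the standing assumptions of Theorem~\ref{TneqT0} are in place.

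Next, I would determine which branch of Theorem~\ref{TneqT0} governs $V_F$. The first step is to show $T \neq T^{\aleph_0}$. Any non-zero $F$-linear functional on $V$ is $pp$-definable, so its kernel is a $pp$-definable subgroup of $V$ of finite index $|F| = p^k$, giving a non-trivial finite invariant. Having established $T \neq T^{\aleph_0}$, I need to decide whether $(\mathbf{G}_1, \leq)^{op}$ contains a cofinal system of even-indexed subgroups of $V$. The $pp$-definable finite-index subgroups of $V$ are precisely the finite-codimensional $F$-subspaces (using that, in the vector-space language, $pp$-definable subgroups of $V^n$ coincide with $F$-subspaces). Each such subspace has index $|F|^d = p^{kd}$ for some $d \geq 1$, which is odd because $p$ is odd. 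Hence no finite-index $pp$-definable subgroup of $V$ has even index at all, so no cofinal system of even-indexed subgroups can exist, and the second case of Theorem~\ref{TneqT0} applies, yielding
\[
K_1(V_F) \cong K_1(F_F) \cong \mathbb{Z}_2 \oplus \bigoplus_{n=1}^\infty\bigl((GL_n(F))^{ab} \oplus \mathbb{Z}_2 \oplus \mathbb{Z}_2\bigr).
\]

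Finally, to pass to the closed form, I would compute $(GL_n(F))^{ab}$. Since $F$ is a field satisfying $1 = u + v$ for units, Theorem~\ref{Cohn} (for $n = 2$) and Theorem~\ref{SLn} (for $n > 2$) identify $[GL_n(F), GL_n(F)] = E_n(F)$, while Theorem~\ref{ED} gives $E_n(F) = SL_n(F)$. Combining these, the determinant induces $(GL_n(F))^{ab} \cong F^\times$ for $n \geq 2$, and the $n = 1$ case is immediate because $GL_1(F) = F^\times$ is already abelian. Since $F = F_{p^k}$ has cyclic multiplicative group of order $p^k - 1$, we obtain $(GL_n(F))^{ab} \cong \mathbb{Z}_{p^k - 1}$ for every $n \geq 1$, and substitution into the previous display produces the stated expression.

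The only step that is not essentially bookkeeping is the check that no cofinal system of even-indexed subgroups exists; this is the real content separating the two cases of Theorem~\ref{TneqT0}, but here it follows cleanly from the fact that every finite index is a power of the odd prime $p$. Everything else is either an appeal to the quoted theorems on commutator subgroups of $GL_n$ over Euclidean domains or a direct identification of $F^\times$ for a finite field.
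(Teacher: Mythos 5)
Your high-level route is the same as the paper's: observe that $1=2+(-1)$ writes $1$ as a sum of units in $F_{p^k}$, place $V_F$ in the second case of Theorem \ref{TneqT0}, and then identify $(GL_n(F))^{ab}\cong F^\times\cong\mathbb Z_{p^k-1}$ via Theorems \ref{Cohn}, \ref{SLn} and \ref{ED}; that last identification is fine. The genuine gap is in your verification that $T\neq T^{\aleph_0}$ and in your description of $\mathbf G_1$. The kernel of a nonzero $F$-linear functional on an infinite-dimensional $V$ is \emph{not} $pp$-definable in $L_F$: a $pp$-formula in $n$ free variables defines (a coset of) the projection onto $V^n$ of the solution set of a homogeneous linear system with coefficients in $F$, and over a field such a projection has the form $N\otimes_F V$ for a subspace $N\leq F^n$. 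Hence every $pp$-definable subgroup of $V^n$ is isomorphic to some $V^k$ and has index either $1$ or $|V|^{n-k}=\infty$ in $V^n$; the finite-codimensional subspaces you invoke simply do not lie in $\mathcal L_1$, since a linear functional on an infinite-dimensional space cannot be expressed with the finitely many scalars available in a first-order formula.

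Carried out correctly, your own computation therefore shows that all invariants are $1$ or $\infty$, i.e.\ $T=T^{\aleph_0}$ (equivalently $V\equiv V^{\aleph_0}$, both being infinite $F$-vector spaces), and that $\mathbf G_1=\{V\}$, so the parity-of-the-index discussion becomes vacuous. This is consistent with the theorem of Perera quoted in \S~\ref{K1V}, which gives $K_0(V_F)\cong\mathbb Z[X]$, i.e.\ a vanishing invariants ideal, over \emph{any} field. On that reading one lands in Theorem \ref{T=T0} and obtains $\mathbb Z_2\oplus\bigoplus_{n=1}^\infty(F^\times\oplus\mathbb Z_2)$, without the extra $\mathbb Z_2$ summands appearing in the statement. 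The paper itself offers no argument for the hypothesis of the second case of Theorem \ref{TneqT0} beyond asserting it, so the step you tried to supply is exactly where the proof needs repair: either exhibit a genuine $pp$-definable subgroup of $V$ of finite index greater than $1$ (which the analysis above rules out), or the corollary should be routed through Theorem \ref{T=T0} instead.
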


\begin{remark}\label{failF2}
Suppose $F$ is the field $F_2$ with $2$ elements and $V_F$ is an infinite vector space. Since $(GL_1(F))^{ab}$ is the trivial group, we obtain $(\Upsilon^1(V_F))^{ab}\cong V$, which creates obstacles in the computation of $(\Omega^1_1)^{ab}$ using the methods developed so far. Thus, we are unable to use the same recipe to compute $K_1(V_F)$. 
\end{remark}

\section{Computation of $K_1(\mathbb{Z}_\mathbb Z)$}\label{DefZ}
Recall that the theory $T:=Th(\mathbb Z_\mathbb Z)$ of the abelian group $\mathbb Z_\mathbb Z$ of integers is not closed under products. Even though the ring $\mathbb Z$ of integers is an ED, the identity $1=u+v$ fails to hold for any units $u,v$ in $\mathbb Z$. Hence Theorem \ref{TneqT0} is not applicable for the computation of $K_1(\mathbb Z_{\mathbb Z})$; however, we use a slight modification to compute $K_1(\mathbb Z_\mathbb Z)$ in Theorem \ref{ZK[x]}.

We have $$(GL_n(\mathbb Z))^{ab}\cong\begin{cases}\mathbb Z_2&\text{if }n=1\text{ or }n\geq3;\\\Z_2\oplus\Z_2&\text{if }n=2.\end{cases}$$ Since abelianization commutes with directed colimits, Propositions \ref{Aut}, \ref{Upsi} and \ref{evsbg}, Lemmas  \ref{semiab}, \ref{wt} and \ref{quotfree}, and Remark \ref{chad} together give that $$(\Upsilon^n(\Z_\Z))^{ab}\cong(\mathrm{Aut}_\mathcal L(\mathbb Z^n))^{ab}\cong(GL_n(\mathbb Z)\ltimes\mathbb Z^n)^{ab}\cong\begin{cases}(GL_n(\Z))^{ab}&\text{if }n\geq2;\\(GL_1(\Z))^{ab}\oplus\Z_2&\text{if }n=1.\end{cases}$$

Since $\Omega^1_1\cong\Upsilon^1\ltimes\Sigma^1_0$, we have $$(\Omega^1_1(\mathbb{Z}))^{ab}\cong(\Upsilon^1)^{ab}\oplus(\Sigma^1_0)^{ab}\cong((GL_1(\Z))^{ab}\oplus\mathbb{Z}_2) \oplus (\Sigma^1_0)^{ab}\cong(\Z_2\oplus\Z_2)\oplus\mathbb{Z}_2.$$

Since $\Omega^2_2\cong \Upsilon^2\ltimes \Omega^2_1\cong\Upsilon^2\ltimes((\Upsilon^1\wr\Sigma^2_1)\ltimes\Sigma^2_0)$, we get
\begin{align*}
(\Omega^2_2)^{ab} &\cong (\Upsilon^2)^{ab}\oplus((\Omega^2_1)^{ab})_{\Upsilon^2}\\&\cong (\Upsilon^2)^{ab}\oplus(((\Upsilon^1)^{ab}\oplus(\Sigma^2_1)^{ab})\oplus((\Sigma^2_0)^{ab})_{(\Upsilon^1\wr\Sigma^2_1)})_{\Upsilon^2}\\&\cong(GL_2(\Z))^{ab}\oplus((((GL_1(\Z))^{ab}\oplus\mathbb{Z}_2)\oplus(\Sigma^2_1)^{ab})\oplus(\Sigma^2_0)^{ab})_{\Upsilon^2}\\&\cong (\mathbb{Z}_2 \oplus \mathbb{Z}_2) \oplus (((\mathbb{Z}_2 \oplus \mathbb{Z}_2) \oplus \mathbb{Z}_2 ) \oplus \mathbb{Z}_2)_{\Upsilon^2}.
\end{align*}
Recall that $\Upsilon^2$ acts on $\Omega^2_1$ by conjugation. Also recall from Lemma \ref{semiab} that $((\Omega^2_1)^{ab})_{\Upsilon^2}\cong(\Omega^2_1)^{ab}/\{h^gh^{-1}\mid h\in(\Omega^2_1)^{ab},g\in\Upsilon^2\}$. Now focus on the only copy of $\Z_2$ that appears in the second last line in the above sequence of isomorphisms. Apart from this copy of $\Z_2$, $\Upsilon^2$ acts trivially on all other direct summands of $(\Omega^2_1)^{ab}$; however, we cannot determine whether this action is trivial or not on this copy. Therefore, we get that $(\Omega^2_2)^{ab}\cong\Z^{k_2}$ for $k_2=5$ or $6$. The computations of $(\Omega_n^n)^{ab}$ for $n>2$ proceed similarly so that we obtain $(\Omega_n^n)^{ab}\cong\Z^{k_n}$ for some $k_n$ satisfying $k_n<k_{n+1}$ for each $n\geq1$. Finally, using Theorem \ref{mainth} we get the following.
\begin{theorem}\label{ZK[x]}
We have $K_1(\mathbb{Z}_{\mathbb{Z}}) \cong \bigoplus_{n=0}^{\infty} \mathbb{Z}_2.$ 
\end{theorem}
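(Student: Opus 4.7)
The plan is to invoke Theorem \ref{mainth}, which identifies $K_1(\mathbb Z_\mathbb Z)$ with the directed colimit $\varinjlim_n(\Omega^n_n)^{ab}$, and to show by induction on $n$ that each $(\Omega^n_n)^{ab}$ is an elementary abelian $2$-group $\mathbb Z_2^{k_n}$ with $k_n$ strictly increasing in $n$. Any directed colimit of injections of such groups with $k_n\to\infty$ is forced to be $\bigoplus_{n=0}^\infty\mathbb Z_2$, which is the desired conclusion.

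The required inputs are already assembled in the paragraph preceding the theorem: $(GL_n(\mathbb Z))^{ab}$ is either $\mathbb Z_2$ or $\mathbb Z_2\oplus\mathbb Z_2$; $(\Upsilon^n(\mathbb Z_\mathbb Z))^{ab}\cong(GL_n(\mathbb Z))^{ab}$ for $n\geq 2$ by Lemma \ref{quotfree}, while $(\Upsilon^1(\mathbb Z_\mathbb Z))^{ab}\cong\mathbb Z_2\oplus\mathbb Z_2$ by Remark \ref{chad}; $(\Sigma^n_m)^{ab}\cong\mathbb Z_2$ by Proposition \ref{commfinperm}; and hence $(\Upsilon^n_m)^{ab}=(\Upsilon^m\wr\Sigma^n_m)^{ab}\cong(\Upsilon^m)^{ab}\oplus\mathbb Z_2$ by Lemma \ref{wt}. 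In particular every building block entering the calculation is a finite elementary abelian $2$-group. The base cases $n=1,2$ are worked out in the discussion preceding the statement: $(\Omega^1_1)^{ab}\cong\mathbb Z_2^3$ and $(\Omega^2_2)^{ab}\cong\mathbb Z_2^{k_2}$ for some $k_2\in\{5,6\}$.

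For the inductive step I would apply Lemma \ref{semiab} to each layer of the iterated semi-direct product decomposition \eqref{autitsemiwr} of $\Omega^n_n$, peeling off the outer factor $\Upsilon^n_m$ and passing to coinvariants for its conjugation action on the inner abelianization. Since both the peeled-off factor and the inner abelianization are finite elementary abelian $2$-groups, so is the quotient; hence $(\Omega^n_n)^{ab}\cong\mathbb Z_2^{k_n}$ for some $k_n\geq 1$. Strict monotonicity $k_n<k_{n+1}$ then follows exactly as in the last paragraph of the proof of Theorem \ref{K1FINAL}: the canonical inclusion $\Omega^n_n\hookrightarrow\Omega^{n+1}_{n+1}$ factors through the normal subgroup $\Omega^{n+1}_n$, so the new outer layer $\Upsilon^{n+1}$ contributes at least the summand $(GL_{n+1}(\mathbb Z))^{ab}\cong\mathbb Z_2$ that lies outside the image of $(\Omega^n_n)^{ab}$.

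The hard part will be pinning down the exact value of $k_n$, since at each layer the conjugation action of the outer $\Upsilon^n_m$ on a specific $\mathbb Z_2$-summand coming either from a lower $\Sigma^n_j$ or from the extra $\mathbb Z_2$ in $(\Upsilon^1)^{ab}$ can be either trivial or non-trivial; in the non-trivial case that summand dies in the coinvariant quotient, and deciding which case occurs requires delicate matrix computations that the recipe does not automate. Fortunately this ambiguity is immaterial for the colimit: any directed system of injections of finite $\mathbb F_2$-vector spaces of unbounded dimension has colimit isomorphic to the countable-dimensional $\mathbb F_2$-vector space $\bigoplus_{n=0}^\infty\mathbb Z_2$, which is the assertion of the theorem.
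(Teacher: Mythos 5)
Your proposal is correct and follows essentially the same route as the paper: reduce to $\varinjlim_n(\Omega^n_n)^{ab}$ via Theorem \ref{mainth}, show each $(\Omega^n_n)^{ab}$ is a finite elementary abelian $2$-group of strictly increasing rank $k_n$ (accepting, exactly as the paper does, that the conjugation action of $\Upsilon^n$ on one $\mathbb Z_2$-summand cannot be pinned down, e.g.\ $k_2\in\{5,6\}$), and observe that the colimit of such a system of injections is $\bigoplus_{n=0}^\infty\mathbb Z_2$ regardless. The only cosmetic difference is that you cite the decomposition \eqref{autitsemiwr} from the vector-space section rather than restating its analogue for $\mathbb Z_\mathbb Z$ with the corrected $\Upsilon^1$ from Remark \ref{chad}, which is what the paper does explicitly.
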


\begin{remark}\label{failMz}
We cannot compute $K_1(M_{\mathbb Z})$ for an infinite free $\mathbb Z$-module $M_{\mathbb Z}$ in a similar way for a non-trivial quotient of $M_\Z$ appears in the expression of $(\Upsilon^1(M_\Z))^{ab}$, which will create obstacles in the computation of $(\Omega^2_2)^{ab}$.
\end{remark}

\section{Connection with algebraic $K_1$}\label{algK1modK1}
Fix a PID $R$. Let us recall the definition of algebraic $K_1$ of $R$. Let $P(R)$ be the skeletally small groupoid of finitely generated projective $R$-modules under $R$-module isomorphisms. This groupoid is equipped with a symmetric monoidal structure of the direct sum ($\oplus$). The \emph{algebraic $K_1$} of a ring $R$, denoted $K_1^{\oplus}(R)$, is defined to be $K_1^\oplus(P(R))$. Since $R$ is a PID, every finitely generated projective $R$-module is free. Thus $P(R)$ is equivalent to the symmetric monoidal groupoid $\mathrm{Free}(R)$ whose objects are finitely generated free $R$-modules. Translations are faithful in both $P(R)$ and $\mathrm{Free}(R)$, and thus Bass' description (Theorem \ref{K1Bass}) can be used for the computation of $K_1^\oplus(R)$.

We established in the proof of Proposition \ref{PIDcolour} that every $pp$-definable set in $\mathcal L(R_R)$ is in bijection with a finitely generated free $R$-module. Also note that the directed colimit in Theorem \ref{mainth} is actually directly from Bass' description of $K_1$ (Theorem \ref{K1Bass}). The following theorem establishes a connection between algebraic and model-theoretic $K_1$.
\begin{theorem}\label{algmodK1}
If $R$ is a PID then there is a natural embedding of $K_1^\oplus(R)$ into $K_1(R_R)$.
\end{theorem}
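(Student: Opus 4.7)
The plan is to build the embedding through Bass' description (Theorem \ref{K1Bass}) of $K_1$ applied to both sides. First I would construct a natural functor $\Phi\colon P(R) \to \C(R_R)$ sending the free module $R^n$ to the $pp$-definable set $R^n\subseteq R_R^n$ and each $R$-module isomorphism $A\in GL_n(R)$ to the $pp$-definable bijection $x\mapsto Ax$; this is well-defined since multiplication by an invertible matrix is $pp$-definable, and Proposition \ref{PIDcolour} guarantees that $\Phi$ covers every $pp$-definable set up to $pp$-definable bijection. At the level of automorphism groups, $\Phi$ induces inclusions $GL_n(R)\hookrightarrow \Omega^n_n$ that factor through $\mathrm{Aut}_{\mathcal L}(R^n)\cong GL_n(R)\ltimes R^n\subseteq \Upsilon^n$ by Proposition \ref{Aut}.

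By Bass' description, $K_1^\oplus(R)=\varinjlim_n GL_n(R)^{ab}$ and, as in Theorem \ref{mainth}, $K_1(R_R)=\varinjlim_n (\Omega^n_n)^{ab}$. Abelianizing the level-wise inclusions and passing to directed colimits is the natural route to the desired map. The main subtlety is that the two stabilizations differ: the algebraic stabilization $A\mapsto A\oplus 1$ produces the full-dimensional matrix action $\tilde A\in\Upsilon^{n+1}$, while the model-theoretic stabilization $f\mapsto \bar f$ (extension by identity outside $R^n\times\{0\}$) lands in $\Omega^{n+1}_n$. In the semi-direct product decomposition $(\Omega^{n+1}_{n+1})^{ab}\cong (\Upsilon^{n+1})^{ab}\oplus((\Omega^{n+1}_n)^{ab})_{\Upsilon^{n+1}}$ from Lemma \ref{semiab}, these two classes lie in different direct summands, so compatibility is not automatic. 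I would address this by analyzing the discrepancy $\tilde A\bar A^{-1}$, which acts as $A$ on each coset $R^n\times\{y\}$ with $y\neq 0$ and as the identity on $R^n\times\{0\}$, and argue that it becomes a product of commutators after further stabilization, using conjugation by translations $t_v$ with $v=(0,\ldots,0,c)$, $c\in R\setminus\{0\}$, to shift supports across cosets.

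For injectivity, I would leverage the direct-summand structure at each level: the image of $GL_n(R)^{ab}$ sits inside the direct summand $(\Upsilon^n)^{ab}$ of $(\Omega^n_n)^{ab}$, and the subgroup $GL_n(R)\ltimes R^n$ of $\Upsilon^n$ retracts onto $GL_n(R)$ via the semidirect-product projection. By Lemma \ref{quotfree} (for $n\geq 2$, with the $n=1$ case handled directly), this descends to a retraction $(\Upsilon^n)^{ab}\twoheadrightarrow GL_n(R)^{ab}$. Combined with the dimension-preserving injectivity of stabilization on abelianizations noted at the end of Step III of the proof of Theorem \ref{K1FINAL}, which gives an injection $(\Omega^n_n)^{ab}\hookrightarrow K_1(R_R)$, the composition $GL_n(R)^{ab}\hookrightarrow (\Omega^n_n)^{ab}\hookrightarrow K_1(R_R)$ admits a retraction at each finite stage and hence is injective in the colimit.

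The hardest part will be the compatibility argument for the two stabilizations, which hinges on the commutator analysis of $\tilde A\bar A^{-1}$. A cleaner alternative, applicable to all PIDs (where $K_1^\oplus(R)\cong GL_1(R)^{ab}=R^\times$ by Bass-Vaserstein stability together with the vanishing of $SK_1$), is to bypass the stabilization issue by constructing the embedding at level one only, as the composition $K_1^\oplus(R)\cong GL_1(R)^{ab}\hookrightarrow (GL_1(R)\ltimes R)^{ab}\hookrightarrow (\Upsilon^1)^{ab}\hookrightarrow (\Omega^1_1)^{ab}\hookrightarrow K_1(R_R)$, relying only on the $n=1$ instance of the semi-direct product decomposition and the colimit injectivity.
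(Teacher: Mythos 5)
Your overall architecture --- the faithful functor $\mathrm{Free}(R)\to\C(R_R)$, Bass' colimit description on both sides, and level-wise inclusions $GL_n(R)\hookrightarrow\Omega^n_n$ factoring through $\mathrm{Aut}_{\mathcal L}(R^n)\cong GL_n(R)\ltimes R^n$ --- is exactly the paper's, and you have correctly isolated the one point the paper passes over in silence: the algebraic stabilization $A\mapsto A\oplus 1$ and the model-theoretic translation $f\mapsto f\sqcup\id$ do not agree. The trouble is that your proposed repair cannot work. By the paper's own computation, the colimit maps $(\Omega^n_n)^{ab}\to(\Omega^{n+1}_{n+1})^{ab}$ are \emph{dimension-preserving}: the class of $\bar A$ lands in the summand $(\Upsilon^{n+1}_n)^{ab}$ carrying $(GL_n(R))^{ab}$, while the class of $\tilde A=A\oplus 1$ lands in the new top summand $(\Upsilon^{n+1})^{ab}\cong(GL_{n+1}(R))^{ab}$, and these persist as \emph{distinct} direct summands of $K_1(R_R)$ --- the final answer contains one copy of $R^\times$ per level. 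Concretely, for $V_F$ over an infinite field and $A=\mathrm{diag}(a)$ with $a\neq 1$, the class of $\bar A$ is $a$ in the first copy of $F^\times$ and the class of $\tilde A$ is $a$ in the second copy; so $[\tilde A\bar A^{-1}]=[\tilde A]-[\bar A]\neq 0$ in $K_1(R_R)$, and no amount of further stabilization or conjugation by translations will exhibit this element as a product of commutators. The level-wise maps are therefore genuinely incompatible with the colimit defining $K_1^\oplus(R)=\varinjlim_n(GL_n(R))^{ab}$. (To be fair, the paper's own proof has the same soft spot: it labels both vertical arrows of its square $f\mapsto f\oplus\id_R$ and then invokes Bass, without noting that the right-hand colimit computing $K_1(R_R)$ must be taken along $\sqcup$-translations, under which the square does not commute.)

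Your fallback of embedding at a single level is the right instinct, but its justification is false: there exist PIDs with nontrivial $SK_1$ (Ischebeck, Grayson), so $K_1^\oplus(R)\cong (GL_1(R))^{ab}= R^\times$ fails in general, and stability does not reach $n=1$ anyway. What is true, and what you should use instead, is that a PID has Krull dimension at most one, so stability together with Theorem \ref{SLn} gives $(GL_n(R))^{ab}=GL_n(R)/E_n(R)\cong K_1^\oplus(R)$ for all $n\geq 3$; fixing one such $n$, your injectivity argument --- the retraction $(\Upsilon^n)^{ab}\twoheadrightarrow(GL_n(R))^{ab}$ coming from Lemmas \ref{quotfree} and \ref{evind}, the direct-summand position of $(\Upsilon^n)^{ab}$ inside $(\Omega^n_n)^{ab}$ from Lemma \ref{semiab}, and the injectivity of $(\Omega^n_n)^{ab}\to K_1(R_R)$ --- is sound and yields the embedding. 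One should then note that the resulting embedding depends on the chosen level only through which copy of $(GL_\bullet(R))^{ab}$ it hits.
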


\begin{proof}
Since every $R$-linear automorphism of $R^n$ is $pp$-definable, the object assignment $R^n\mapsto R_R^n:\mathrm{Free}(R)\to\C(R_R)$ together with morphism assignment $f\mapsto f$ defines a faithful functor. Moreover, the map $\overline{x} \mapsto(\overline{x},0)$ is both an $R$-module embedding $R^n\hookrightarrow R^{n+1}$ and a definable injection $R_R^n\hookrightarrow R_R^{n+1}$. Thus defining $i:\mathrm{Aut}_{\mathrm{Free}(R)}(R^n)\to\mathrm{Aut}_{\C(R_R)}(R_R^n)$ by $i(f) := (f \oplus id_R)$, we get the following commutative diagram of groups.
$$
\xymatrix{
\mathrm{Aut}_{\mathrm{Free}(R)}(R^n) \ar@{^{(}->}[r]\ar[d]^{i} &\mathrm{Aut}_{\C(R_R)}(R^n_R) \ar[d]^{i}\\
\mathrm{Aut}_{\mathrm{Free}(R)}(R^{n+1}) \ar@{^{(}->}[r] &\mathrm{Aut}_{\C(R_R)}(R^{n+1}_R)}$$

Finally, since the abelianization functor commutes with colimits, Bass' description of $K_1$ given in Theorem \ref{K1Bass} allows us to conclude that there is a natural embedding $K_1^\oplus(R)\to K_1(R_R)$.
\end{proof}

The algebraic $K_1$ of Euclidean domains is well-understood.
\begin{theorem}\cite[Example~1.3.5]{Weibel}\label{algK1ED}
If $R$ is an ED, then $K_1^\oplus(R)\cong R^\times$, where the abelianization map takes $A\in GL(R):=\varinjlim_{n\in\mathbb N} GL_n(R)$ to $\det(A)\in K_1(R)$.
\end{theorem}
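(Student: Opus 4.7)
The plan is to compute $K_1^\oplus(R)$ directly from Bass' description in Theorem \ref{K1Bass}. The sequence $R \subset R^2 \subset R^3 \subset \cdots$, with inclusions $\overline{x} \mapsto (\overline{x},0)$, is cofinal in $\mathrm{Free}(R)$, and the induced translations $GL_n(R) \to GL_{n+1}(R)$ are the usual block embeddings $A \mapsto A \oplus 1$. Remark \ref{ctblcofinal} then gives $K_1^\oplus(R) \cong \varinjlim_n (GL_n(R))^{ab} \cong (GL(R))^{ab}$, since abelianization commutes with directed colimits.

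Next I would construct the comparison map. Because $\det(A \oplus 1) = \det(A)$, the determinants $\det_n : GL_n(R) \to R^\times$ are compatible with the block embeddings and assemble into a homomorphism $\det : GL(R) \to R^\times$. Since $R^\times$ is abelian, this factors uniquely as $\overline{\det} : (GL(R))^{ab} \to R^\times$. The inclusion $R^\times = GL_1(R) \hookrightarrow GL(R)$ provides a section of $\det$, so $\overline{\det}$ is split surjective, and the theorem reduces to showing that its kernel is trivial; equivalently, that $SL(R) := \varinjlim_n SL_n(R) \subseteq [GL(R), GL(R)]$.

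This is where the Euclidean hypothesis enters, and only cited results are needed. Theorem \ref{ED} gives $SL_n(R) = E_n(R)$ for every $n \geq 1$, so $SL(R) = \varinjlim_n E_n(R) =: E(R)$. Theorem \ref{SLn} gives $[GL_n(R), GL_n(R)] = E_n(R)$ for every $n > 2$, which passes to the colimit to yield $[GL(R), GL(R)] = E(R)$. Combining these, $SL(R) = E(R) = [GL(R), GL(R)]$, so $\overline{\det}$ is injective and hence an isomorphism, and the identification of the induced map with $[A] \mapsto \det(A)$ is immediate from the construction.

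The main (minor) obstacle is the low-dimensional behaviour: Theorem \ref{SLn} excludes $n = 1, 2$, and in the second case, describing $[GL_2(R), GL_2(R)]$ genuinely requires extra hypotheses like Cohn's (Theorem \ref{Cohn}), which are not assumed here. However, this is not a real obstruction: $GL_1(R) = R^\times$ is already abelian with $SL_1(R) = \{1\}$, and any commutator identity among $2 \times 2$ matrices becomes available at stage $n = 3$ once we stabilize via $A \mapsto A \oplus 1$. Hence the low-dimensional gap disappears in the colimit $(GL(R))^{ab}$, and no further argument is needed.
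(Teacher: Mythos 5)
This statement is quoted in the paper without proof (it is simply cited from Weibel), so there is no in-paper argument to compare against; your proposal is the standard proof of this fact and it is correct. The reduction of the claim to $SL(R) = E(R) = [GL(R),GL(R)]$ via Theorems \ref{ED} and \ref{SLn}, together with the observation that the determinant splits off $R^\times$ and that the low-dimensional cases $n \leq 2$ are absorbed under stabilization $A \mapsto A \oplus 1$, is exactly how the cited source argues.
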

 
From the explicit computations of $K_1(R_R)$ in this paper for different EDs, it can be seen that the model-theoretic $K_1$ is much larger compared to the algebraic $K_1$--this is not a surprise as there are far too many definable self-bijections in comparison with linear automorphisms.

\begin{remark}
Suppose $R$ is an ED satisfying $1=u+v$ for some $u,v \in R^\times$. Then the composition $GL(R) \twoheadrightarrow K_1^\oplus(R)\hookrightarrow K_1(R_R)$ can be described as follows: if $A \in GL_n(R)$ is not in the image of the natural embedding of $GL_{n-1}(R)$ into $GL_n(R)$, then it maps to $\det(A)\in(GL_n(R))^{ab}$, where $(GL_n(R))^{ab}$ is the leading term of $(\Omega^n_n)^{ab}$. 
\end{remark}
\section*{Acknowledgements} A part of this work, \S~\ref{SemGrP}-\ref{K1V}, appeared in Chapter~3 of the PhD thesis \cite{kuberthesis} of the second author. This part of the work was supported by an Overseas Students Fellowship of the School of Mathematics, University of Manchester.

\printbibliography
\end{document}